\tikzset{curve/.style={settings={#1},to path={(\tikztostart)
    .. controls ($(\tikztostart)!\pv{pos}!(\tikztotarget)!\pv{height}!270:(\tikztotarget)$)
    and ($(\tikztostart)!1-\pv{pos}!(\tikztotarget)!\pv{height}!270:(\tikztotarget)$)
    .. (\tikztotarget)\tikztonodes}},
    settings/.code={\tikzset{quiver/.cd,#1}
        \def\pv##1{\pgfkeysvalueof{/tikz/quiver/##1}}},
    quiver/.cd,pos/.initial=0.35,height/.initial=0}
\tikzset{tail reversed/.code={\pgfslambdarrowsstart{tikzcd to}}}
\tikzset{2tail/.code={\pgfslambdarrowsstart{Implies[reversed]}}}
\tikzset{2tail reversed/.code={\pgfslambdarrowsstart{Implies}}}
\tikzset{no body/.style={/tikz/dash pattern=on 0 off 1mm}}
\newtheorem{theorem}{Theorem}[section]
\newtheorem{proposition}[theorem]{Proposition}
\newtheorem{lemma}[theorem]{Lemma}
\newtheorem{corollary}[theorem]{Corollary}
\theoremstyle{definition}
\newtheorem{remark}[theorem]{Remark}
\newcommand{\qedno}{\null\nobreak\hfill\ensuremath{\square}} 
\newcommand{\uxa}{\ensuremath{(\underline{X},\underline{A})}}
\newcommand{\caa}{\ensuremath{(\underline{CA},\underline{A})}}
\newcommand{\cxx}{\ensuremath{(\underline{CX},\underline{X})}}
\newcommand{\cyy}{\ensuremath{(\underline{CY},\underline{Y})}}
\title{Loop space decompositions of moment-angle complexes associated to flag complexes}
\author{Lewis Stanton*}
\address{School of Mathematical Sciences, University of Southampton, University Rd, Southampton, SO17 1BJ.}
\email{lrs1g18@soton.ac.uk}
\thanks{Lewis Stanton - Email: lrs1g18@soton.ac.uk.  ORCID ID: 0000-0003-4662-054X}
\subjclass[2020]{Primary 55P15, 55P35.}
\keywords{homotopy type, loop space, polyhedral product}
\begin{document}

\begin{abstract}
We prove that the loop space of the moment-angle complex associated to the $k$-skeleton of a flag complex belongs to the class $\mathcal{P}$ of spaces homotopy equivalent to a finite type product of spheres and loops on simply connected spheres. To do this, a general result showing $\mathcal{P}$ is closed under retracts is proved.
\end{abstract}

\maketitle

\section{Introduction}
\label{sec:Intro}

Polyhedral products have attracted vast attention due to their many applications across mathematics (see \cite{BBC}). A polyhedral product is a natural subspace of $\prod_{i=1}^m X_i$ defined as follows. Let $K$ be a simplicial complex on the vertex set $[m] = \{1,2,\cdots,m\}$. For $1 \leq i \leq m$, let $(X_i,A_i)$ be a pair of pointed $CW$-complexes, where $A_i$ is a pointed $CW$-subcomplex of $X_i$. Let $\uxa = \{(X_i,A_i)\}_{i=1}^m$ be the sequence of pairs. For each simplex $\sigma \in K$, let $\uxa^\sigma$ be defined by \[ \uxa^\sigma = \prod\limits_{i=1}^m Y_i \text{ where }  Y_i = \begin{cases} X_i & i \in \sigma \\ A_i & i \notin \sigma. \end{cases}\] The \textit{polyhedral product} determined by $\uxa$ and $K$ is \[\uxa^K = \bigcup\limits_{\sigma \in K} \uxa^{\sigma} \subseteq \prod\limits_{i=1}^m X_i.\] An important special case is when $(X_i,A_i) = (D^2,S^1)$ for all $i$. These polyhedral products are called \textit{moment-angle complexes}, and are denoted $\mathcal{Z}_K$. More generally, when $(X_i,A_i) = (D^n,S^{n-1})$ for $n \geq 2$ and all $i$, the polyhedral products are called \textit{generalised moment-angle complexes}. In this paper, we identify the homotopy type of the loop space of certain polyhedral products. One particular case is when $K$ is a flag complex. When $K$ is flag, certain polyhedral products give models for the classifying space of graph products of groups, implying that the loop space of these polyhedral products are graph products of groups. This geometric group theoretic framework has been generalised by Cai \cite{Ca} to consider loops on a wider class of polyhedral products associated to flag complexes. For general simplicial complexes $K$, the loop space of the corresponding moment-angle complex is related to a certain diagonal subspace arrangement \cite{D}. 

Let $\mathcal{W}$ be the full subcategory of topological spaces which are homotopy equivalent to a finite type wedge of simply connected spheres, and let $\mathcal{P}$ be the full subcategory of $H$-spaces which are homotopy equivalent to a finite type product of spheres and loops on simply connected spheres. Note that if $X \in \mathcal{P}$, by the Hopf invariant one problem \cite{Ad}, the only spheres that can appear in a product decomposition for $X$ are $S^n$ for $n \in \{1,3,7\}$, and it will be assumed that the loops on spheres $\Omega S^n$ which appear are of dimension $n \geq 2$, $n \notin \{2,4,8\}$, as when $n \in \{2,4,8\}$, there is a homotopy equivalence $\Omega S^n \simeq S^{n-1} \times \Omega S^{2n-1}$. Relations between spaces in $\mathcal{W}$ and spaces in $\mathcal{P}$ will be used frequently throughout the paper. In particular, the Hilton-Milnor theorem \cite{H,M} implies that looping sends spaces in $\mathcal{W}$ to spaces in $\mathcal{P}$, and decomposing the suspension of a product as a wedge and the James construction \cite{J} implies that suspension sends spaces in $\mathcal{P}$ to spaces in $\mathcal{W}$.

Determining the homotopy type of polyhedral products in general is difficult, but in the special case of a moment-angle complex, progress has been made in showing that certain moment-angle complexes are in $\mathcal{W}$. For example, moment-angle complexes associated with shifted complexes \cite[Theorem 1.2]{GT2}, flag complexes with chordal 1-skeleton \cite[Theorem 4.6]{GPTW}, or more generally, totally fillable simplicial complexes \cite[Corollary 7.3]{IK2} are in $\mathcal{W}$. There is a wider range of moment-angle complexes (including the aforementioned ones) for which its loop space is in $\mathcal{P}$. For example, moment-angle complexes associated to any flag complex are in $\mathcal{P}$ \cite[Corollary 7.3]{PT}. It is known that many moment-angle complexes are not in $\mathcal{W}$ due to the existence of non-trivial cup products in cohomology. For example, when $K$ is the boundary of a square, $\mathcal{Z}_K \simeq S^3 \times S^3$, and it follows that for any simplicial complex $L$ containing $K$ as a full subcomplex, $\mathcal{Z}_L$ contains non-trivial cup products in cohomology, and so $\mathcal{Z}_L \notin \mathcal{W}$.

In this paper, we specialise to the case where $K$ is the $k$-skeleton of a flag complex. In particular, we prove the following result.
\begin{theorem}
\label{loopofkskelinP}
    Let $k \geq 0$, and let $K$ be the $k$-skeleton of a flag complex on the vertex set $[m]$ and $A_1,\cdots,A_m$ be path connected $CW$-complexes such that $\Sigma A_i \in \mathcal{W}$ for all $i$. Then $\Omega \caa^{K} \in \mathcal{P}$.
\end{theorem}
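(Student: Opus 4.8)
The plan is to argue by induction on the number of vertices $m$. Write $K=L^{(k)}$, where $L$ denotes the flag complex. If $k\geq\dim L$ then $K=L$ is flag, and $\Omega\caa^{K}\in\mathcal{P}$ by \cite[Corollary 7.3]{PT} — or by \cite{Ca} in the generality $\Sigma A_i\in\mathcal{W}$; this is the base of the induction, and it also covers $m=1$ and $k=0$ (in which case $L^{(0)}$ consists of $m$ disjoint points, again a flag complex). So suppose $0<k<\dim L$ and $m\geq2$, and fix a vertex $v$ of $K$. The combinatorial fact that drives the induction is that $K\setminus v=(L\setminus v)^{(k)}$ and $\mathrm{lk}_{K}(v)=(\mathrm{lk}_{L}(v))^{(k-1)}$, each being the skeleton of a flag complex on at most $m-1$ vertices; hence by the inductive hypothesis $\Omega\caa^{K\setminus v}\in\mathcal{P}$ and $\Omega\caa^{\mathrm{lk}_{K}(v)}\in\mathcal{P}$.

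For the inductive step I would use the standard decomposition $K=\mathrm{star}_{K}(v)\cup_{\mathrm{lk}_{K}(v)}(K\setminus v)$ together with $\mathrm{star}_{K}(v)=\{v\}\ast\mathrm{lk}_{K}(v)$, giving a homotopy pushout of polyhedral products with $\caa^{\mathrm{star}_{K}(v)}\cong CA_{v}\times\caa^{\mathrm{lk}_{K}(v)}$. The coordinate projection $\pi\colon\caa^{K}\to\caa^{K\setminus v}$ (forgetting the $v$-th coordinate) has a section, obtained by inserting the basepoint of $A_{v}$, so $\Omega\caa^{K}\simeq\Omega F\times\Omega\caa^{K\setminus v}$ where $F$ is the homotopy fibre of $\pi$. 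Feeding the pushout square into Mather's cube theorem over $\caa^{K\setminus v}$ — the four side faces are ``fibre of a composite'' squares, hence homotopy pullbacks, because the three non-pushout corners are honest products $CA_{v}\times\caa^{\mathrm{lk}_{K}(v)}$, $A_{v}\times\caa^{\mathrm{lk}_{K}(v)}$, $A_{v}\times\caa^{K\setminus v}$ — shows that the square of fibres is again a homotopy pushout, whence a short computation gives $F\simeq A_{v}\ast G$ with $G=\mathrm{hofib}\bigl(\caa^{\mathrm{lk}_{K}(v)}\hookrightarrow\caa^{K\setminus v}\bigr)$. As $\mathcal{P}$ is closed under finite products, it remains to show $\Omega F\in\mathcal{P}$.

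Since $F=A_{v}\ast G=\Sigma(A_{v}\wedge G)$ is a suspension, the James splitting and closure of $\mathcal{W}$ under retracts give $\Omega F\in\mathcal{P}\iff F\in\mathcal{W}$; and because $\Sigma A_{v}$ is a finite wedge of simply connected spheres, $F=\Sigma A_{v}\wedge G$ lies in $\mathcal{W}$ provided $\Sigma^{2}G\in\mathcal{W}$. This last point is where flag-ness of $L$ is used: although $\mathrm{lk}_{K}(v)=(\mathrm{lk}_{L}(v))^{(k-1)}$ need not be a full subcomplex of $K\setminus v$, it lies inside the \emph{full} subcomplex $(\mathrm{lk}_{L}(v))^{(k)}$ of $K\setminus v$ — here one uses that $\mathrm{lk}_{L}(v)$ is a full subcomplex of $L\setminus v$ because $L$ is flag — so $\caa^{(\mathrm{lk}_{L}(v))^{(k)}}$ is a retract of $\caa^{K\setminus v}$, and $G$ can be analysed through the factorisation $\caa^{(\mathrm{lk}_{L}(v))^{(k-1)}}\hookrightarrow\caa^{(\mathrm{lk}_{L}(v))^{(k)}}\hookrightarrow\caa^{K\setminus v}$ — a skeleton inclusion followed by a split inclusion — using the inductive hypotheses for the flag complex $\mathrm{lk}_{L}(v)$, the interchange of $\mathcal{W}$ and $\mathcal{P}$ under looping (Hilton--Milnor) and suspension (James), and, crucially, closure of $\mathcal{P}$ and $\mathcal{W}$ under retracts. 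I expect this final step to be the main obstacle. The difficulty is genuine: $\caa^{K\setminus v}$ may carry torsion in homology even when $\Omega\caa^{K\setminus v}\in\mathcal{P}$, so proving $\Sigma^{2}G\in\mathcal{W}$ (which the theorem forces) cannot come from a naive homotopy identification of $G$ and instead requires a careful iterated-fibration argument — and it is precisely here that the retract-closure of $\mathcal{P}$ is indispensable, since $\Omega\caa^{K}$ ends up being produced only as a retract of a manifestly $\mathcal{P}$ space rather than as one outright.
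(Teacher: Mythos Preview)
Your inductive setup via the star--link decomposition is natural, and your cube computation $F\simeq A_v*G$ is correct, but there is a genuine gap at precisely the point you flag as ``the main obstacle'': you never establish $\Sigma^2 G\in\mathcal{W}$, and your proposed factorisation does not close it. The obstruction is structural. Because $\mathrm{lk}_K(v)=(\mathrm{lk}_L(v))^{(k-1)}$ is not full in $K\setminus v$, the inclusion $\caa^{\mathrm{lk}_K(v)}\hookrightarrow\caa^{K\setminus v}$ does not split, so $\Omega G$ is not exhibited as a retract of anything known to be in $\mathcal{P}$. Your factorisation through $\caa^{(\mathrm{lk}_L(v))^{(k)}}$ does not help, since the skeletal inclusion $(\mathrm{lk}_L(v))^{(k-1)}\hookrightarrow(\mathrm{lk}_L(v))^{(k)}$ is again not full (same vertex set), so the same difficulty recurs one level down. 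And even if one somehow obtained $\Omega G\in\mathcal{P}$, this would not yield $\Sigma^2 G\in\mathcal{W}$: for instance $\Omega\mathbb{C}P^2\simeq S^1\times\Omega S^5\in\mathcal{P}$ while $\Sigma^2\mathbb{C}P^2\notin\mathcal{W}$. So the join $F=A_v*G$ genuinely demands control of $G$ itself, not just of $\Omega G$.

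The paper circumvents this by replacing the star--link decomposition with $K=K_{\{v\}\cup N(v)}\cup_{K_{N(v)}}K_{V(K)\setminus\{v\}}$, where $K_{N(v)}$ is the \emph{full} subcomplex on the neighbours of $v$ (in general strictly larger than $\mathrm{lk}_K(v)$). Now $K_{N(v)}$ is full in both pieces by construction, so Proposition~\ref{fullsubcomplexdecomp} applies directly: the relevant fibres $G,H$ are fibres of retractions, hence $\Omega G,\Omega H\in\mathcal{P}$ by Theorem~\ref{retractofPisinP}, and the fibre of $\caa^K\to\caa^{K_{N(v)}}$ involves half-smashes $G\rtimes\mathcal{A}'$ rather than joins. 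The point of the half-smash is Lemma~\ref{loophalfsmash}: $\Omega(G\rtimes\mathcal{A}')\simeq\Omega(\mathcal{A}'*\Omega G)\times\Omega G$, which needs only $\Omega G\in\mathcal{P}$ and $\Sigma\mathcal{A}'\in\mathcal{W}$, never $\Sigma G\in\mathcal{W}$. This is assembled as Theorem~\ref{pushoutofPisinP}. One further adjustment: to make the vertex count drop one must pick $v$ non-dominating, so the case where every vertex is dominating (i.e.\ $K$ is the $k$-skeleton of a simplex) is handled separately via Porter's explicit splitting (Proposition~\ref{shifteddecomp}), giving $\caa^K\in\mathcal{W}$ outright. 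Finally, note that your base case invokes \cite[Corollary~7.3]{PT}, which this theorem is meant to recover rather than assume; the paper's base case is simply $m=1$.
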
 There are two cases of Theorem ~\ref{loopofkskelinP} which should be highlighted. The first important case is when $k$ is the dimension of $K$. While not stated in this generality, the following result recovers \cite[Corollary 7.3]{PT} via a different method. 

\begin{corollary}
\label{loopofflagcomplexinP}
    Let $K$ be a flag complex on the vertex set $[m]$ and $A_1,\cdots,A_m$ be path connected $CW$-complexes such that $\Sigma A_i \in \mathcal{W}$ for all $i$. Then $\Omega \caa^K \in \mathcal{P}$.
    \qedno
\end{corollary}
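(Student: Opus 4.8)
The plan is to obtain Corollary \ref{loopofflagcomplexinP} as an immediate specialisation of Theorem \ref{loopofkskelinP}. The key elementary observation is that a simplicial complex agrees with its own $k$-skeleton as soon as $k$ is at least its dimension: if $K$ is a flag complex on $[m]$ and $d = \dim K$, then every simplex of $K$ has dimension at most $d$, so the $d$-skeleton of $K$ is $K$ itself. In particular $K$ is the $d$-skeleton of a flag complex, namely of $K$. Note $d \geq 0$ since $K$ is a nonempty simplicial complex on $[m]$, so this is a legitimate choice of $k$ in the theorem.

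With this in hand I would apply Theorem \ref{loopofkskelinP} to the integer $k = d$ and to the same complex $K$ and spaces $A_1, \dots, A_m$. The hypotheses transfer verbatim: $K$ is the $d$-skeleton of a flag complex on $[m]$, and $A_1, \dots, A_m$ are path connected $CW$-complexes with $\Sigma A_i \in \mathcal{W}$ for each $i$. The theorem then yields $\Omega \caa^K \in \mathcal{P}$, which is exactly the assertion of the corollary. Specialising further to $A_i = S^1$ for all $i$ identifies $\caa^K$ with $\zk$, so the corollary recovers \cite[Corollary 7.3]{PT} by a different route, as claimed in the surrounding text.

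Accordingly, there is no substantive obstacle in the proof of the corollary itself: all of the homotopy-theoretic content — the closure of $\mathcal{P}$ under retracts, together with an induction over the skeleta of the underlying flag complex — is packaged inside the proof of Theorem \ref{loopofkskelinP}, and the corollary merely records the extremal case $k = \dim K$ in which the $k$-skeleton operation is vacuous.
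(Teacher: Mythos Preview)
Your proposal is correct and matches the paper's own treatment: the corollary is marked \qedno\ because it is precisely the case $k = \dim K$ of Theorem~\ref{loopofkskelinP}, as you observe.
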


The second important case of Theorem ~\ref{loopofkskelinP} is when $k=1$.

\begin{corollary}
\label{loopofgraphinP}
    Let $K$ be a graph on the vertex set $[m]$ and $A_1,\cdots,A_m$ be path connected $CW$-complexes such that $\Sigma A_i \in \mathcal{W}$ for all $i$. Then $\Omega \caa^K \in \mathcal{P}$.
    \qedno
\end{corollary}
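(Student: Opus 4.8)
The plan is to recognize this statement as the special case $k=1$ of Theorem~\ref{loopofkskelinP}, so that essentially nothing beyond that theorem is needed. First I would pass from the graph $K$ to its clique complex (equivalently, flag complex) $\mathcal{K}$ on the vertex set $[m]$, whose simplices are the vertices and edges of $K$ together with all subsets of $[m]$ that are pairwise joined by edges of $K$. By construction $\mathcal{K}$ is a flag complex, and a simplex of $\mathcal{K}$ of dimension at most $1$ is either a vertex or an edge of $K$, so the $1$-skeleton of $\mathcal{K}$ is exactly $K$.

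Having written $K$ as the $1$-skeleton of the flag complex $\mathcal{K}$, I would then apply Theorem~\ref{loopofkskelinP} with $k=1$ to the sequence $A_1,\dots,A_m$ of path connected $CW$-complexes; the hypothesis there, that $\Sigma A_i \in \mathcal{W}$ for all $i$, is precisely the hypothesis of the corollary. The theorem then gives $\Omega\caa^{K} \in \mathcal{P}$, which is the desired conclusion.

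I expect no real obstacle here: the only content is the elementary combinatorial observation that a graph is recovered as the $1$-skeleton of its clique complex, which is immediate since a simplicial complex carries neither loops nor repeated edges. The same mechanism, taking $k$ equal to the dimension of $K$, yields Corollary~\ref{loopofflagcomplexinP}, and indeed both corollaries are stated in the paper as the two distinguished cases of Theorem~\ref{loopofkskelinP}.
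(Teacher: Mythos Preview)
Your proposal is correct and matches the paper's approach exactly: the paper presents this corollary with no proof beyond noting it is the case $k=1$ of Theorem~\ref{loopofkskelinP}, and your observation that any graph is the $1$-skeleton of its clique complex supplies the one-line justification implicit in that remark.
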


Loop spaces of moment-angle complexes associated to graphs in certain cases have been studied. In particular, explicit decompositions of the loops of moment-angle complexes associated to wheel graphs and certain generalisations of wheel graphs \cite{T2}, and certain classes of generalised book graph \cite{St} have been given. This paper establishes that decompositions of this form exist for all graphs. While in principle an explicit decomposition could be obtained, in practice it would be difficult to do so.

Letting $A_i = S^{n-1}$ with $n \geq 2$ for all $i$ in Theorem ~\ref{loopofkskelinP} has consequences for generalised moment-angle complexes and moment-angle complexes. 

\begin{corollary}
\label{corgenMACofgraphinP}
    Let $k \geq 0$, and let $K$ be the $k$-skeleton of a flag complex. Then $\Omega (D^n,S^{n-1})^{K} \in \mathcal{P}$ where $n \geq 2$.
    \qedno
\end{corollary}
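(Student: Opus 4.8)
The plan is to deduce this immediately from Theorem~\ref{loopofkskelinP} by specialising the pairs. First I would record the elementary identification $CS^{n-1} \cong D^{n}$, under which the inclusion $S^{n-1} \hookrightarrow D^{n}$ corresponds to $\underline{A} \hookrightarrow \underline{CA}$ in the case $A = S^{n-1}$. Consequently, if we set $A_{i} = S^{n-1}$ for every $i \in [m]$, then the generalised moment-angle complex $(D^{n}, S^{n-1})^{K}$ is literally the polyhedral product $\caa^{K}$ appearing in Theorem~\ref{loopofkskelinP}.

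Next I would verify the two hypotheses of Theorem~\ref{loopofkskelinP} for this choice. Since $n \geq 2$, the sphere $S^{n-1}$ is path connected, so each $A_{i}$ is a path connected $CW$-complex. Moreover $\Sigma A_{i} = \Sigma S^{n-1} \simeq S^{n}$, and since $n \geq 2$ this is a single simply connected sphere, hence trivially a finite type wedge of simply connected spheres; thus $\Sigma A_{i} \in \mathcal{W}$ for all $i$.

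With both hypotheses in place, Theorem~\ref{loopofkskelinP} applied to $K$, the $k$-skeleton of a flag complex, and to the pairs $(D^{n}, S^{n-1}) = \caa$ gives $\Omega (D^{n}, S^{n-1})^{K} = \Omega \caa^{K} \in \mathcal{P}$, as required. I do not anticipate any genuine obstacle: the corollary is purely a matter of checking that the generalised moment-angle complex fits the framework of Theorem~\ref{loopofkskelinP}, and the only point deserving a word of care is the homeomorphism $CS^{n-1} \cong D^{n}$ of pairs so that the specialised polyhedral product coincides on the nose with the one in the theorem.
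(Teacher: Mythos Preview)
Your proposal is correct and matches the paper's own treatment exactly: the paper states just before this corollary that it follows by letting $A_i = S^{n-1}$ with $n \geq 2$ in Theorem~\ref{loopofkskelinP}, and gives no further argument. Your verification of the hypotheses (path-connectedness of $S^{n-1}$ for $n\geq 2$, $\Sigma S^{n-1}\simeq S^n\in\mathcal{W}$, and the identification $CS^{n-1}\cong D^n$) is precisely what is implicit in that one-line reduction.
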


\begin{corollary}
\label{corMACofgraphinP}
    Let $k \geq 0$, and let $K$ be the $k$-skeleton of a flag complex. Then $\Omega \mathcal{Z}_{K} \in \mathcal{P}$.
    \qedno
\end{corollary}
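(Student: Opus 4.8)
The plan is simply to specialise the results already in hand. By definition the moment-angle complex is the polyhedral product $\mathcal{Z}_K = (D^2,S^1)^K$, so Corollary~\ref{corMACofgraphinP} is exactly the case $n=2$ of Corollary~\ref{corgenMACofgraphinP}; it therefore suffices to establish the latter, which is itself the case $A_i = S^{n-1}$ of Theorem~\ref{loopofkskelinP}. So the whole argument reduces to checking that the hypotheses of Theorem~\ref{loopofkskelinP} are met by this choice of pairs, and to identifying the resulting polyhedral product with $\mathcal{Z}_K$.

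Concretely, fix $n \geq 2$ and set $A_i = S^{n-1}$ for all $i \in [m]$. Each $A_i$ is a path-connected $CW$-complex (here one uses $n \geq 2$, so that $S^{n-1}$ is connected), and $\Sigma A_i = \Sigma S^{n-1} \cong S^n$ is a single simply connected sphere, hence $\Sigma A_i \in \mathcal{W}$. The cone $CS^{n-1}$ is contractible with $S^{n-1}$ as a subcomplex, and there is a homotopy equivalence of pairs $(CS^{n-1},S^{n-1}) \simeq (D^n,S^{n-1})$; since the polyhedral product construction is invariant under homotopy equivalences of the defining pairs, one gets $\caa^K \simeq (D^n,S^{n-1})^K$ and hence $\Omega\caa^K \simeq \Omega(D^n,S^{n-1})^K$. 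As $K$ is the $k$-skeleton of a flag complex, Theorem~\ref{loopofkskelinP} gives $\Omega\caa^K \in \mathcal{P}$, so $\Omega(D^n,S^{n-1})^K \in \mathcal{P}$; this is Corollary~\ref{corgenMACofgraphinP}. Taking $n=2$ and recalling $(D^2,S^1)^K = \mathcal{Z}_K$ yields $\Omega\mathcal{Z}_K \in \mathcal{P}$, which is the desired statement.

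I do not expect any genuine obstacle at this stage: all the difficulty is carried by Theorem~\ref{loopofkskelinP}, and the step from generalised moment-angle complexes to moment-angle complexes is a cosmetic substitution of the parameter. The only points meriting attention are bookkeeping ones — verifying $\Sigma A_i \in \mathcal{W}$ for $A_i = S^{n-1}$, and noting that replacing the pair $(D^n,S^{n-1})$ by the homotopy equivalent pair $(CS^{n-1},S^{n-1})$ does not alter the homotopy type of the polyhedral product — and both are routine.
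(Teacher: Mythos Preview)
Your proposal is correct and matches the paper's approach exactly: the paper states this corollary without proof, indicating just before it that one obtains it (and Corollary~\ref{corgenMACofgraphinP}) by setting $A_i = S^{n-1}$ with $n \geq 2$ in Theorem~\ref{loopofkskelinP}. Your verification of the hypotheses and the identification $(CS^{n-1},S^{n-1}) \cong (D^n,S^{n-1})$ are the expected routine checks.
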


It is interesting to note when the decomposition in Corollary ~\ref{corMACofgraphinP} arises from the fact that $\mathcal{Z}_K \in \mathcal{W}$. In the case of $K^1$, it is shown in \cite[Theorem 11.8]{IK2} that $K^1$ has $\mathcal{Z}_{K^1} \in \mathcal{W}$ if and only if $K^1$ is chordal. In particular, if $K^1$ is not chordal, then $\mathcal{Z}_{K^1}$ is not in $\mathcal{W}$. However, Corollary ~\ref{corMACofgraphinP} implies that nevertheless, $\Omega \mathcal{Z}_{K^1}$ is still in $\mathcal{P}$. In the case of $K$ itself, a similar result is true \cite[Theorem 6.4]{PT}, namely that $\mathcal{Z}_K \in \mathcal{W}$ iff $K^1$ is chordal.

To prove Theorem ~\ref{loopofkskelinP}, we will show that $\mathcal{P}$ is closed under retracts. This result was stated in \cite[p. 224]{PT} without proof, so a proof is provided here. The main tool that is used in the proof of this is the atomicity of loops on spheres when localised at certain primes (see Theorem ~\ref{atomicityoflocalisedloopsphere}). Let $K$ be a simplicial complex with a decomposition as $K = K_1 \cup_L K_2$ where $L$ is a full subcomplex of both $K_1$ and $K_2$. Closedness of $\mathcal{P}$ under retractions is applied to show that if $\Omega \caa^{K_1} \in \mathcal{P}$ and $\Omega \caa^{K_2} \in \mathcal{P}$, then $\Omega \caa^K \in \mathcal{P}$. This then allows us to prove the main result by an inductive argument.

The decomposition in Theorem ~\ref{loopofkskelinP} fits into a wider story related to loop space decompositions of spaces. Localise at a prime $p$. Given a space $X$, one may wish to find a decomposition of $\Omega X$ into a product of spaces, where each space in the product is indecomposable. Spheres $S^n$ where $n \in \{1,3,7\}$ and loops on simply connected spheres $\Omega S^{2m+1}$, where $m \geq 1$ are examples of indecomposable $H$-spaces. In a series of papers \cite{CMN1,CMN2,CMN3}, Cohen, Moore and Neisendorfer defined spaces $S^{2m+1}\{p^r\}$ and $T^{2m+1}\{p^r\}$ for $r \geq 1$ and $m \geq 1$, for which the loop space of a Moore space decomposes as a finite type product of these spaces. The spaces $S^{2m+1}\{p^r\}$ are indecomposable, and the spaces $T^{2m+1}\{p^r\}$ are indecomposable except for $T^3\{p\}$, in which case there is a homotopy equivalence $T^3\{p\} \simeq T^{2p+1}\{p\} \times U^1$, where $U^1$ is some indecomposable space. Anick \cite{An} conjectured that if $X$ is a finite, connected $CW$-complex, then localised at almost all primes $p$, $\Omega X$ decomposes as a finite type product of indecomposable spaces consisting of spheres, loops on simply connected spheres, $S^{2m+1}\{p^r\}$, $T^{2m+1}\{p^r\}$ and $U^1$. Theorem ~\ref{loopofkskelinP} verifies Anick's conjecture for polyhedral products $\caa^{K}$ where $\Sigma A_i \in \mathcal{W}$ and $K$ is the $k$-skeleton of a flag complex, and does so without the need to localise.

In Section ~\ref{sec:prelim}, some preliminary results in linear algebra and homotopy theory that will be required are introduced. In Section ~\ref{sec:retract}, we prove that the retract of a space in $\mathcal{P}$ is in $\mathcal{P}$. In Section ~\ref{sec:ApptoPP}, this is applied to polyhedral products to prove Theorem ~\ref{loopofkskelinP}.

\subsection*{Acknowledgements} The author would like to thank Stephen Theriault for the many helpful discussions during the preparation of this work. The author would also like to thank the referee for many valuable comments which have improved the paper, and also for spotting a gap in the original submission. 

\section{Preliminary Material}
\label{sec:prelim}

\subsection{Idempotent Matrices}
\label{sec:idempmat} 
In this section, we state and prove the basic properties of idempotent matrices that will be required in Section ~\ref{sec:retract}. Denote by $M_{n}(\mathbb{Z})$ the set of $n$ x $n$ matrices with integer entries. A matrix $A \in M_n(\mathbb{Z})$ is \textit{idempotent} if $A^2 = A$. Let $N(A)$ and $C(A)$ denote the null space and column space of $A$ respectively. Recall that the null space and column space of a matrix is the kernel and image of the corresponding linear map. The following result gives a decomposition of $\mathbb{Z}^n$ in terms of the null space and column space of an idempotent matrix. This Lemma is given as an exercise in \cite[p. 163]{L}, so we provide a proof here.

\begin{lemma}
\label{Zndecompidemp}
Let $A \in M_n(\mathbb{Z})$ be an idempotent matrix. Then $\mathbb{Z}^n \cong N(A) \oplus C(A)$. 
\end{lemma}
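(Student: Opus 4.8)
The plan is to exhibit an internal direct sum decomposition $\mathbb{Z}^n = N(A) \oplus C(A)$, from which the claimed abstract isomorphism follows immediately. The key observation is that the idempotency $A^2 = A$ provides a canonical way to split any vector: given $v \in \mathbb{Z}^n$, write
\[ v = (v - Av) + Av. \]
Since $A$ has integer entries, both $Av$ and $v - Av$ lie in $\mathbb{Z}^n$, so no passage to $\mathbb{Q}^n$ is needed — integrality is automatic, which is the only point where the argument differs from the familiar vector space version.

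First I would check that the two summands land in the right subgroups. Clearly $Av \in C(A)$ by definition of the column space. For the other summand, compute $A(v - Av) = Av - A^2 v = Av - Av = 0$ using $A^2 = A$, so $v - Av \in N(A)$. This shows $\mathbb{Z}^n = N(A) + C(A)$.

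Next I would show the sum is direct, i.e. $N(A) \cap C(A) = 0$. Suppose $v \in N(A) \cap C(A)$. Being in $C(A)$, write $v = Aw$ for some $w \in \mathbb{Z}^n$; being in $N(A)$ means $Av = 0$. Then $v = Aw = A^2 w = A(Aw) = Av = 0$. Hence $N(A) \cap C(A) = 0$, and combined with the previous step, $\mathbb{Z}^n = N(A) \oplus C(A)$ as an internal direct sum of subgroups, which gives the stated isomorphism of abelian groups.

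There is no real obstacle here; the only thing to be careful about is emphasising that all the decompositions take place within $\mathbb{Z}^n$ (not merely in $\mathbb{Q}^n$), which holds precisely because $A \in M_n(\mathbb{Z})$, so that $N(A)$ and $C(A)$ are genuinely subgroups of $\mathbb{Z}^n$ and the splitting $v = (v-Av) + Av$ is integral.
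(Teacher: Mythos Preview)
Your proof is correct and follows essentially the same approach as the paper: both use the splitting $v = (v - Av) + Av$ to show $\mathbb{Z}^n = N(A) + C(A)$, and both verify trivial intersection via $v = Aw$ implies $v = A^2w = Av = 0$. The paper treats the zero matrix separately, but your uniform argument covers that case just as well.
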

\begin{proof}
If $A$ is the zero matrix, then $C(A) = \{0\}$ and $\mathbb{Z}^n = N(A)$, and so the result holds in this case. Now suppose $A$ is non-trivial. First, we show that $\mathbb{Z}^n = N(A) + C(A)$. Clearly, $N(A)$ and $C(A)$ are subspaces of $\mathbb{Z}^n$, and so $N(A) + C(A) \subseteq \mathbb{Z}^n$. For the opposite inclusion, let $x \in \mathbb{Z}^n$. Write $x$ as $x = Ax-(Ax-x)$. Applying $A$ to $Ax-x$ and using the fact that $A$ is idempotent, we obtain \[A(Ax-x) = A^2x-Ax = Ax-Ax = 0.\] Therefore, since $Ax \in C(A)$ and $Ax-x \in N(A)$, $\mathbb{Z}^n \subseteq N(A) + C(A)$. Hence, $\mathbb{Z}^n = N(A) + C(A)$.

Now we show that $N(A) \cap C(A) = \{0\}$. The zero vector is contained in $N(A)$ and $C(A)$. Let $x \in N(A) \cap C(A)$. Since $x \in C(A)$, there exists $x' \in \mathbb{Z}^n$ such that $Ax' = x$. Applying $A$ to $x$ and using the fact that $x \in N(A)$, we obtain \[0 = Ax = A^2x' = Ax' = x.\] Therefore, $N(A) \cap C(A) = \{0\}$ and so $\mathbb{Z}^n = N(A) \oplus C(A)$. 
\end{proof} 

The next result describes how an idempotent matrix acts on an element of the column space. The proof is immediate from the definition of an idempotent matrix.

\begin{lemma}
\label{idemponcolumn}
    Let $A \in M_n(\mathbb{Z})$ be an idempotent matrix and let $x \in C(A)$. Then $Ax = x$. 
    \qedno
\end{lemma}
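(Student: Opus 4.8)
The plan is to use nothing more than the defining relation $A^2 = A$ together with the description of $C(A)$ as the image of the linear map associated to $A$. First I would unpack the hypothesis $x \in C(A)$: by definition of the column space, there exists some $y \in \mathbb{Z}^n$ with $x = Ay$. Then I would apply $A$ to both sides and invoke idempotency, obtaining
\[
Ax = A(Ay) = A^2 y = Ay = x,
\]
which is precisely the assertion. This is the entire argument.

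There is no genuine obstacle here. The only facts needed are that every element of $C(A)$ is of the form $Ay$ for some $y$, and that $A^2 = A$; in particular the identity $Ax = x$ is independent of which preimage $y$ one picks, so no care about uniqueness, torsion, or the splitting from Lemma~\ref{Zndecompidemp} is required. The statement is recorded separately only because it will be convenient later as a bookkeeping tool when manipulating elements already known to lie in the column space of an idempotent.
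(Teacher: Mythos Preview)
Your argument is correct and is precisely what the paper intends: it records the lemma without proof, noting only that it is immediate from the definition of an idempotent matrix, which is exactly the one-line computation $Ax = A(Ay) = A^2y = Ay = x$ you give.
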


The final result describes the properties of the components of a vector $v \in \mathbb{Z}^n$ which extends to a basis of $\mathbb{Z}^n$. The result is clear from the contrapositive.

\begin{lemma}
\label{vectextendbasis}
Let $v = (v_1,\cdots,v_n)^T \in \mathbb{Z}^n$ be a vector which extends to a basis of $\mathbb{Z}^n$. Then the greatest common divisor of the non-zero components $v_1,\cdots,v_n$ is $1$. Moreover, one of $v_1,\cdots,v_n$ is odd.
\qedno
\end{lemma}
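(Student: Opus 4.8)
The plan is to argue by contrapositive, using the standard characterisation of the vectors that extend to a basis of $\mathbb{Z}^n$: a vector $v \in \mathbb{Z}^n$ extends to a basis if and only if it occurs as a column of some matrix $M \in GL_n(\mathbb{Z})$, equivalently a matrix with $\det M = \pm 1$ (after reordering the basis we may take it to be the first column). If $v = 0$ it plainly cannot extend to a basis when $n \geq 1$, so we may assume $v \neq 0$, and in particular that at least one component is non-zero.

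First I would establish the gcd statement. Suppose, towards a contradiction, that some integer $d \geq 2$ divides every non-zero component of $v$. Since $d$ also divides $0$, it then divides every component $v_1,\dots,v_n$. Let $M \in GL_n(\mathbb{Z})$ have first column $v$. Expanding $\det M$ along the first column gives $\pm 1 = \det M = \sum_{i=1}^n v_i C_{i1}$, where each cofactor $C_{i1}$ is an integer; hence $d \mid \det M = \pm 1$, a contradiction. (Equivalently, one may simply factor $d$ out of the first column of $M$ to see $d \mid \det M$.) Therefore the greatest common divisor of the non-zero components of $v$ is $1$.

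The \emph{moreover} clause is just the special case $d = 2$: if every component $v_i$ were even, then the right-hand side of $\pm 1 = \sum_{i=1}^n v_i C_{i1}$ would be even, which is impossible, so at least one $v_i$ is odd. I do not expect any genuine obstacle here; the argument reduces to a one-line divisibility observation once the characterisation of basis-extending vectors is invoked. The only points requiring mild care are the degenerate case $v = 0$ and the bookkeeping between "non-zero components" and "all components", both of which are dispatched by the remark that every integer divides $0$. (As an alternative to the cofactor expansion, one could phrase the same fact via Smith normal form, noting that the ideal of $\mathbb{Z}$ generated by the entries of $v$ is the whole of $\mathbb{Z}$ precisely when $v$ extends to a basis.)
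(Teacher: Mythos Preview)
Your proof is correct and follows the same approach as the paper, which simply remarks that the result is clear from the contrapositive without giving further details. Your cofactor-expansion argument is exactly the natural way to flesh out that contrapositive.
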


\subsection{Atomicity of loops on spheres} In this section, we recall the notion of atomic spaces. A simply connected topological space $X$ is \textit{atomic} \cite[Section 4]{CMN3} if any self map $f:X \rightarrow X$ inducing an isomorphism in the lowest non-vanishing degree in homology is a homotopy equivalence. A space $X$ is \textit{decomposable} if it is homotopy equivalent to a product $A \times B$ where $A$ and $B$ are not contractible. A space is \textit{indecomposable} if it is not decomposable. The study of atomic spaces is useful since atomic spaces are indecomposable. In Section ~\ref{sec:retract}, we will be interested in the atomicity properties of $\Omega S^n$. In particular, the following result is from \cite[Corollary 5.2]{CPS}.

\begin{theorem}
\label{atomicityoflocalisedloopsphere}
Let $p$ be a prime, and let $f$ be a self-map of $\Omega^k S^{m+1}$, $k<m$, which induces an isomorphism on the (least non-vanishing) homology group $H_{m+1-k}(\Omega^k S^{m+1}; \mathbb{Z}/p\mathbb{Z})$. If $p>2$, we suppose that $m$ is even, and if $p = 2$, we suppose $m \notin \{1,3,7\}$. Then $f$ is a $p$-local homotopy equivalence.
\qedno
\end{theorem}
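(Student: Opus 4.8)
The plan is to reduce the statement to a purely homological assertion and then invoke the mod-$p$ Whitehead theorem. Write $X = \Omega^k S^{m+1}$ and localize everything at $p$. Since $k<m$ we have $m+1-k \geq 2$, so $X$ is simply connected of finite type and its least non-vanishing reduced homology group is $H_{m+1-k}(X;\mathbb{Z}) \cong \mathbb{Z}$, generated by the $k$-fold loop adjoint of the identity of $S^{m+1}$. A map between simply connected spaces of finite type inducing an isomorphism on $H_{*}(-;\mathbb{Z}/p)$ in every degree is a $p$-local homotopy equivalence, so it suffices to upgrade the hypothesis — $f$ an isomorphism on $H_{m+1-k}(X;\mathbb{Z}/p)$, equivalently $f^{*}$ an isomorphism on the bottom cohomology group — to the statement that $f^{*}$ is an isomorphism on $H^{*}(X;\mathbb{Z}/p)$ in all degrees.

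The crux is the following computational claim: under the stated hypotheses on $p$ and $m$, the ring $H^{*}(X;\mathbb{Z}/p)$ is generated, as an algebra closed under the action of the mod-$p$ Steenrod algebra $\mathcal{A}_p$, by the fundamental class $\iota \in H^{m+1-k}(X;\mathbb{Z}/p)$. One proves this by running the $k+1$ iterated path–loop fibrations relating $X$ to $S^{m+1}$ (equivalently, the James/EHP machinery together with the Dyer–Lashof calculus for the $k$-fold loop structure) and checking that each new polynomial or exterior generator of $H_{*}(X;\mathbb{Z}/p)$ appearing at a given stage is detected, dually, by iterated Steenrod operations on $\iota$, with Kudo's transgression theorem governing how $p$-th power operations propagate through a loop fibration. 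This is precisely where the hypotheses enter: for $p$ odd one needs $m$ even so that the relevant operations $\mathcal{P}^{s}$ act non-trivially, and for $p=2$ one must exclude $m\in\{1,3,7\}$, which is dictated by the exceptional behaviour of $S^{m+1}$ in those dimensions — in particular the splittings $\Omega S^{n}\simeq S^{n-1}\times\Omega S^{2n-1}$ for $n\in\{2,4,8\}$, for which atomicity visibly fails. This is the content of \cite[Corollary 5.2]{CPS}, and it is the step I expect to be the main obstacle, since it is where the detailed structure of the $\mathcal{A}_p$-action and the exact list of exceptional pairs $(p,m)$ must be pinned down.

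Granting the claim, the argument closes quickly. For any continuous self-map $f$ of $X$, the induced map $f^{*}$ on $H^{*}(X;\mathbb{Z}/p)$ is a ring homomorphism commuting with all Steenrod operations, so its image is an $\mathcal{A}_p$-stable subalgebra; by hypothesis it contains $\iota$, hence by the generation claim it is all of $H^{*}(X;\mathbb{Z}/p)$, so $f^{*}$ is surjective in every degree. Since $H^{*}(X;\mathbb{Z}/p)$ is of finite type, a degreewise surjective endomorphism of finite-dimensional vector spaces is an isomorphism; dualizing, $f_{*}$ is an isomorphism on $H_{*}(X;\mathbb{Z}/p)$, and the reduction of the first paragraph then yields that $f$ is a $p$-local homotopy equivalence. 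One technical point is worth flagging for $k\geq 1$: because $f$ is not assumed to be a loop map, one cannot appeal to naturality of the Dyer–Lashof operations, so the generation statement must genuinely be formulated in terms of the Steenrod action alone, and verifying that the $k$-fold loop homology generators are reachable by primary operations is part of what makes the second step delicate.
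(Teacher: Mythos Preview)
The paper does not prove this theorem at all: it is quoted verbatim from \cite[Corollary~5.2]{CPS} and closed with a \texttt{\textbackslash qedno}. Your proposal sketches exactly the strategy used in that reference---show that $H^{*}(\Omega^{k}S^{m+1};\mathbb{Z}/p)$ is generated over the Steenrod algebra by the bottom class, then use naturality of cup products and Steenrod operations together with finite type to promote a bottom-degree isomorphism to an isomorphism in all degrees, and finish with the mod-$p$ Whitehead theorem. So your approach and the paper's coincide, in the sense that both ultimately rest on \cite{CPS}; you have simply unpacked what that citation contains, and you correctly identify the Steenrod-generation step (and the role of the hypotheses on $(p,m)$) as the substantive part.
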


Theorem ~\ref{atomicityoflocalisedloopsphere} implies that localised at any prime $p$, $\Omega S^{n}$ is atomic for $n$ odd, and when $n$ is even and $n \notin \{1,3,7\}$, $\Omega S^n$ is only atomic when localised at 2. The following result of Serre \cite{Se} shows that, localised at an odd prime, the loop space of an even dimensional sphere is decomposable. 

\begin{theorem}
\label{oddprimessplitting}
    Let $p$ be an odd prime. There is a $p$-local homotopy equivalence \[\Omega S^{2n} \simeq S^{2n-1} \times \Omega S^{4n-1}.\eqno\qed\]
\end{theorem}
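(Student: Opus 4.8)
The plan is to exhibit an explicit map
\[
\varphi\colon S^{2n-1}\times\Omega S^{4n-1}\longrightarrow \Omega S^{2n}
\]
and to show that, after localising at the odd prime $p$, it induces an isomorphism on homology; since both source and target are simple spaces (for $n\ge 2$ the source is simply connected and the target is a loop space, and the borderline case $n=1$ is handled below), the homology Whitehead theorem for nilpotent spaces then promotes this to a $p$-local homotopy equivalence. The raw material is the Bott--Samelson calculation $H_*(\Omega\Sigma X;\mathbb{Z})\cong T(\widetilde H_*(X;\mathbb{Z}))$: on a single odd-degree generator the tensor algebra is the (commutative) polynomial ring on that generator, so $H_*(\Omega S^{2n};\mathbb{Z})=\mathbb{Z}[x]$ with $|x|=2n-1$, $H_*(\Omega S^{4n-1};\mathbb{Z})=\mathbb{Z}[z]$ with $|z|=4n-2$, and $H_*(S^{2n-1};\mathbb{Z})=\Lambda(y)$ with $|y|=2n-1$. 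In particular $\Lambda(y)\otimes\mathbb{Z}[z]$ and $\mathbb{Z}[x]$ are abstractly isomorphic as graded groups, each a copy of $\mathbb{Z}$ in every degree divisible by $2n-1$.

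To build $\varphi$, let $i\colon S^{2n-1}\to\Omega S^{2n}$ be the adjoint of the identity of $S^{2n}=\Sigma S^{2n-1}$ (the bottom-cell inclusion), so $i_*(y)=x$; let $w=[\iota_{2n},\iota_{2n}]\colon S^{4n-1}\to S^{2n}$ be the Whitehead square of the fundamental class; and set $g=\Omega w\colon\Omega S^{4n-1}\to\Omega S^{2n}$. Define $\varphi$ as the composite of $i\times g$ with the loop multiplication $\mu\colon\Omega S^{2n}\times\Omega S^{2n}\to\Omega S^{2n}$. Then on homology $\varphi_*$ is $i_*\otimes g_*$ followed by the Pontryagin product, and because $g$ is a loop map, $g_*$ is a ring homomorphism, so $\varphi_*(y^{\varepsilon}\otimes z^{j})=x^{\varepsilon}\cdot g_*(z)^{j}$.

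Everything thus reduces to computing $g_*(z)\in H_{4n-2}(\Omega S^{2n};\mathbb{Z})=\mathbb{Z}\langle x^2\rangle$, i.e.\ writing $g_*(z)=c\,x^2$ and finding $c$. Here I would use the adjunction identity $\Omega w\circ\eta=\widetilde w$, where $\eta\colon S^{4n-2}\to\Omega\Sigma S^{4n-2}=\Omega S^{4n-1}$ is the unit and $\widetilde w\colon S^{4n-2}\to\Omega S^{2n}$ is the adjoint of $w$; since $\eta_*$ sends the fundamental class to $z$, this gives $g_*(z)=\widetilde w_*(\text{fundamental class})$. Post-composing with the second James--Hopf map $H_2\colon\Omega S^{2n}\to\Omega S^{4n-1}$, which carries $x^2$ to a generator of $H_{4n-2}(\Omega S^{4n-1};\mathbb{Z})$, and invoking the classical fact that the Whitehead square has Hopf invariant $\pm2$, one obtains $c=\pm2$. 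Consequently $\varphi_*(y^{\varepsilon}\otimes z^{j})=\pm2^{j}\,x^{\varepsilon+2j}$; as every non-negative integer $l$ is uniquely of the form $\varepsilon+2j$ with $\varepsilon\in\{0,1\}$ and $j\ge0$, the map $\varphi_*$ is multiplication by $\pm2^{\lfloor l/2\rfloor}$ on $H_{l(2n-1)}$ and lies between zero groups in the remaining degrees. Since $p$ is odd, $2$ is invertible in $\mathbb{Z}_{(p)}$, so $\varphi_*$ is a $\mathbb{Z}_{(p)}$-homology isomorphism, completing the proof.

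The only step that is not pure bookkeeping is the determination of $(\Omega w)_*$ on the bottom homology class, which I have isolated as the statement that $[\iota_{2n},\iota_{2n}]$ has Hopf invariant $\pm2$; this is also precisely where oddness of $p$ is used. The remaining care is over basepoints and over the case $n=1$, where one must note that $S^1$, $\Omega S^2$ and $S^1\times\Omega S^3$ are all nilpotent, so that the homology Whitehead theorem still applies.
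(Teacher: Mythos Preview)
The paper does not prove this statement: it is quoted as a classical result of Serre and closed with a tombstone at the end of the display. Your argument is correct and is the standard one. It is worth observing that the two substantive computations you isolate are already carried out elsewhere in the paper for other purposes: the fact that $(\Omega[\iota_{2n},\iota_{2n}])_*$ sends the bottom class of $\Omega S^{4n-1}$ to twice the generator of $H_{4n-2}(\Omega S^{2n})$ is exactly Lemma~\ref{loopedhopfmapmulti}, and the claim that the second James--Hopf map $h_2\colon\Omega S^{2n}\to\Omega S^{4n-1}$ induces an isomorphism on $H_{4n-2}$ is Lemma~\ref{JamesHopfidentity}. So your proof dovetails neatly with the paper's own toolkit, and had the author wished to include a proof of the Serre splitting, all the pieces were already in hand.
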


\subsection{James-Hopf maps and Hopf invariants}

In this section, we introduce the James-Hopf maps and prove basic properties of their induced map on homology that will be required in Section ~\ref{sec:retract}. All homology groups will be assumed to have integer coefficients unless otherwise stated.

Let $X$ be a path-connected $CW$-complex such that $H_*(X)$ is torsion free. Let $E: X \rightarrow \Omega \Sigma X$ be the suspension map. The Bott-Samelson theorem implies that $H_*(\Omega \Sigma X) \cong T(\tilde{H}_*(X))$ where $T$ is the tensor algebra functor. Moreover, $E_*$ induces the inclusion of $\tilde{H}_*(X)$ into $T(\tilde{H}_*(X))$. Let $e: \Sigma \Omega \Sigma X \xrightarrow{e} \bigvee_{k\geq 1} \Sigma X^{\wedge k}$ be the James decomposition \cite{J}, where $X^{\wedge k}$ is the $k$-fold smash product of $X$ with itself. The James-Hopf map $h_k: \Omega \Sigma X \rightarrow \Omega (\Sigma X^{\wedge k})$ is the adjoint of the composite \[\overline{h}_k:\Sigma \Omega \Sigma X \xrightarrow{\simeq} \bigvee\limits_{k\geq 1} \Sigma X^{\wedge k}\rightarrow{} \Sigma X^{\wedge k}\] where the righthand map is the pinch map. The case that is applicable to Section ~\ref{sec:retract} is $X = S^{n-1}$ and $k=2$. In this case, $h_2$ is a map from $\Omega S^n$ to $\Omega S^{2n-1}$, and we can describe the induced map $(h_2)_*$ on homology in degree $2n-2$. Note that $H_{2n-2}(\Omega S^n) \cong H_{2n-2}(\Omega S^{2n-1}) \cong \mathbb{Z}$.

\begin{lemma}
\label{JamesHopfidentity}
    The map \[(h_2)_*:H_{2n-2}(\Omega S^n) \rightarrow H_{2n-2}(\Omega S^{2n-1})\] is an isomorphism. In particular, the generator $\delta \in H_{2n-2}(\Omega S^n)$ maps to $\tau \in H_{2n-2}(\Omega S^{2n-1})$, where $\tau$ is a generator.
\end{lemma}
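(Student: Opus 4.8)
The plan is to identify the relevant homology generators explicitly and then pin down $(h_2)_*$ by computing the homomorphism induced by $\overline{h}_2$ on reduced homology in two different ways. Write $X = S^{n-1}$ and fix a generator $u \in \tilde{H}_{n-1}(X) \cong \mathbb{Z}$. By the Bott--Samelson theorem $H_*(\Omega S^n) = H_*(\Omega\Sigma X) \cong T(\tilde{H}_*(X))$, which here is the free associative ring on the single generator $u$ of degree $n-1$; hence $H_{2n-2}(\Omega S^n)$ is infinite cyclic, generated by $u\otimes u$, and we take $\delta = u\otimes u$. Since $X^{\wedge 2} = S^{2n-2}$, we have $\Sigma X^{\wedge 2} = S^{2n-1}$, and $H_{2n-2}(\Omega S^{2n-1}) \cong \tilde{H}_{2n-2}(X^{\wedge 2})$ is generated by $\tau := E_*(u\otimes u)$, where $E\colon X^{\wedge 2} \to \Omega S^{2n-1}$ is the suspension map and $u\otimes u$ now denotes the image of the generator under the reduced K\"unneth isomorphism $\tilde{H}_{n-1}(X)\otimes\tilde{H}_{n-1}(X) \cong \tilde{H}_{2n-2}(X^{\wedge 2})$.

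Next I would record the effect of $\overline{h}_2$ on reduced homology. The James decomposition $e$ respects tensor length: on reduced homology it is the canonical isomorphism $\Sigma\,\overline{T}(\tilde{H}_*(X)) \cong \bigoplus_{k\geq 1}\Sigma\,(\tilde{H}_*(X))^{\otimes k}$ carrying the suspension of a length-$k$ tensor into the $k$-th wedge summand. Post-composing with the pinch onto the $k=2$ summand shows that, after desuspension, $(\overline{h}_2)_*$ is exactly the projection $\overline{T}(\tilde{H}_*(X)) \to (\tilde{H}_*(X))^{\otimes 2}$; in particular it sends $\delta$ to the generator of $\tilde{H}_{2n-2}(X^{\wedge 2})$. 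On the other hand, the triangle identity for the $(\Sigma,\Omega)$-adjunction gives $\overline{h}_2 = \mathrm{ev}\circ\Sigma h_2$, where $\mathrm{ev}\colon \Sigma\Omega\Sigma X^{\wedge 2} \to \Sigma X^{\wedge 2}$ is the evaluation map, and $\mathrm{ev}_*$ is, after desuspension, the projection onto the length-one part $\tilde{H}_*(X^{\wedge 2}) \subseteq T(\tilde{H}_*(X^{\wedge 2}))$. Comparing the two computations shows that the length-one component of $(h_2)_*(\delta)$ equals $u\otimes u = \tau$.

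Finally, a dimension count closes the gap: $(h_2)_*(\delta)$ lies in $H_{2n-2}(\Omega S^{2n-1})$, while every length-$k$ tensor in $T(\tilde{H}_*(X^{\wedge 2}))$ with $k\geq 2$ sits in degree $k(2n-2) > 2n-2$ for $n\geq 2$, so $(h_2)_*(\delta)$ automatically lies in the length-one part and therefore equals $\tau$. Hence $(h_2)_*$ carries the generator $\delta$ to the generator $\tau$ and is an isomorphism in degree $2n-2$. The step needing the most care is the bookkeeping in the middle paragraph---knowing precisely that the James decomposition is graded by tensor length and that $\mathrm{ev}$ selects the length-one summand---since everything else is a formal consequence of the adjunction together with the dimension count. (Alternatively, one can argue with the combinatorial James construction $J(S^{n-1}) \simeq \Omega S^n$ and its filtration $J_1 \subseteq J_2 \subseteq \cdots$: the map $h_2$ restricts to $J_2(S^{n-1}) \to J_1(S^{2n-2}) = S^{2n-2} \hookrightarrow \Omega S^{2n-1}$, which collapses $J_1(S^{n-1})$ and is a degree $\pm 1$ map on the top cell $J_2(S^{n-1})/J_1(S^{n-1}) = S^{2n-2}$; since $\delta$ is the image of the top-cell generator under $J_2(S^{n-1}) \hookrightarrow \Omega S^n$, naturality of $h_2$ along this inclusion yields the claim.)
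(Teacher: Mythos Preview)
Your argument is correct and follows essentially the same route as the paper: both use the triangle identity $\overline{h}_2 \simeq \mathrm{ev}\circ\Sigma h_2$ together with the fact that $(\overline{h}_2)_*$ hits a generator in degree $2n-1$ by the definition of the James splitting plus the pinch. The only cosmetic difference is how the final step is phrased: the paper observes that $H_{2n-1}(\Sigma\Omega S^n)$, $H_{2n-1}(\Sigma\Omega S^{2n-1})$ and $H_{2n-1}(S^{2n-1})$ are all $\mathbb{Z}$ and the composite is an isomorphism, forcing each factor to be an isomorphism, whereas you unpack $\mathrm{ev}_*$ explicitly as the length-one projection and then make a dimension count; these are the same observation in two dialects.
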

\begin{proof}
   Consider the composite \[h'_2:\Sigma \Omega S^{n} \xrightarrow{\Sigma h_2} \Sigma \Omega S^{2n-1} \xrightarrow{ev} S^{2n-1}\] where $ev$ is the evaluation map. The map $h'_2$ is homotopic to $\overline{h}_2$ since it is the adjoint of $h_2$. By definition of $\overline{h}_2$ as the composite of a homotopy equivalence followed by the pinch map, $(\overline{h}_2)_*$ sends a generator $\sigma \delta \in H_{2n-1}(\Sigma \Omega S^n)$ to a generator $\tau' \in H_{2n-1}(S^{2n-1})$. Therefore, $(h'_2)_*$ also sends $\sigma \delta$ to $\tau'$. Since $H_{2n-1}(\Sigma \Omega S^n)$, $H_{2n-1}(\Sigma \Omega S^{2n-1})$ and $H_{2n-1}(S^{2n-1})$ are isomorphic to $\mathbb{Z}$, the only possibility is that $(\Sigma h_2)_*$ and $ev_*$ are isomorphisms in degree $2n-1$. Therefore, $(\Sigma h_2)_*(\sigma \delta) = \sigma\tau$ where $\tau$ is a generator of $H_{2n-2}(\Omega S^{2n-1})$. From the homology suspension isomorphism, we obtain that $(h_2)_*(\delta) = \tau$.
\end{proof}

Now consider the case where $X = S^{2n-1}$. For $m,k \geq 1$ and maps $f:S^{m} \rightarrow Z$ and $g:S^{k} \rightarrow Z$, denote the Whitehead product of $f$ and $g$ by $[f,g]:S^{m+k+1} \rightarrow Z$, and denote its adjoint, the Samelson product, by $\langle \tilde{f},\tilde{g}\rangle: S^{m+k} \rightarrow \Omega Z$, where $\tilde{f}$ and $\tilde{g}$ are the adjoints of $f$ and $g$ respectively. In particular, let $id:S^{2n} \rightarrow S^{2n}$ be the identity map. 

\begin{lemma}
\label{loopedhopfmapmulti}
    The map \[(\Omega [id,id])_*:H_{4n-2}(\Omega S^{4n-1}) \rightarrow H_{4n-2}(\Omega S^{2n})\] sends a generator $\tau \in H_{4n-2}(\Omega S^{4n-1})$ to $2\delta \in H_{4n-2}(\Omega S^{2n})$ where $\delta$ is a generator of $H_{4n-2}(\Omega S^{2n})$.
\end{lemma}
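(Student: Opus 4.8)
The plan is to recognise $(\Omega[id,id])_*(\tau)$ as the Hurewicz image of the Samelson product $\langle\widetilde{id},\widetilde{id}\rangle$ and then to evaluate that image inside the Pontryagin ring $H_*(\Omega S^{2n})$.

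First I would make the reduction to a Hurewicz computation. Let $s$ denote the fundamental class of $H_{4n-2}(S^{4n-2})$ and let $E\colon S^{4n-2}\to\Omega S^{4n-1}=\Omega\Sigma S^{4n-2}$ be the suspension map. By the Bott--Samelson theorem $H_*(\Omega S^{4n-1})\cong T(\widetilde H_*(S^{4n-2}))$ and $E_*$ is the inclusion of the generating module, so $E_*(s)$ is a generator of $H_{4n-2}(\Omega S^{4n-1})$; take $\tau=E_*(s)$. The adjoint of a map $g\colon\Sigma S^{4n-2}\to S^{2n}$ is $\Omega g\circ E$, and the Samelson product $\langle\widetilde{id},\widetilde{id}\rangle\colon S^{4n-2}\to\Omega S^{2n}$ is by definition the adjoint of the Whitehead product $[id,id]\colon S^{4n-1}\to S^{2n}$; hence $\langle\widetilde{id},\widetilde{id}\rangle=\Omega[id,id]\circ E$. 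Therefore
\[
  (\Omega[id,id])_*(\tau)=(\Omega[id,id]\circ E)_*(s)=\langle\widetilde{id},\widetilde{id}\rangle_*(s),
\]
i.e.\ the quantity to be computed is exactly the Hurewicz image of $\langle\widetilde{id},\widetilde{id}\rangle$ in $H_{4n-2}(\Omega S^{2n})$.

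Next I would evaluate this Hurewicz image. Again by Bott--Samelson, $H_*(\Omega S^{2n})\cong T(\widetilde H_*(S^{2n-1}))$ is the tensor algebra on a single generator $x$ in degree $2n-1$, so $H_{4n-2}(\Omega S^{2n})$ is infinite cyclic, generated by the square $x^2$; and since $\widetilde{id}=E$ and $E_*$ is the inclusion of the generating module, $\widetilde{id}$ has Hurewicz image $x$. The Hurewicz homomorphism of a loop space carries Samelson products to graded commutators in the Pontryagin ring, so
\[
  \langle\widetilde{id},\widetilde{id}\rangle_*(s)=[x,x]=x^2-(-1)^{(2n-1)^2}x^2=2x^2
\]
because $(2n-1)^2$ is odd. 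Setting $\delta=x^2$, a generator of $H_{4n-2}(\Omega S^{2n})$, this gives $(\Omega[id,id])_*(\tau)=2\delta$, as required.

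The hard part is the single non-formal ingredient: the compatibility of the Hurewicz map of a loop space with the Samelson product (as a graded commutator of Pontryagin products), together with the attendant sign bookkeeping. I would cite this from a standard source such as G.\ W.\ Whitehead's \emph{Elements of Homotopy Theory} or Neisendorfer's book; for the present statement only the value up to sign is needed, i.e.\ only that the cokernel of the map is $\mathbb{Z}/2$. An alternative that stays within the machinery already introduced is to post-compose with the James--Hopf map $h_2\colon\Omega S^{2n}\to\Omega S^{4n-1}$: by Lemma~\ref{JamesHopfidentity} applied with $n$ replaced by $2n$, $(h_2)_*$ is an isomorphism on $H_{4n-2}$ sending $\delta$ to $\tau$, while $h_2\circ\Omega[id,id]\circ E$ is the adjoint of the second James--Hopf invariant of the Whitehead square $[id,id]$, classically equal to $\pm 2$ times the identity of $S^{4n-1}$; comparing the two evaluations of $(h_2\circ\Omega[id,id])_*(\tau)$ then yields the claim again. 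In either approach the coefficient $2$ records the classical fact that the Whitehead square of an even-dimensional sphere has Hopf invariant $2$.
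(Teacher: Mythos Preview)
Your proof is correct and follows essentially the same approach as the paper: both use the adjunction $\langle E,E\rangle=\Omega[id,id]\circ E$ to reduce the computation to the Hurewicz image of the Samelson product, and then invoke the fact that the Hurewicz map sends Samelson products to graded commutators in the Pontryagin ring $H_*(\Omega S^{2n})\cong T(x)$, yielding $[x,x]=2x^2$. Your write-up is a bit more explicit about the Bott--Samelson identification and the sign bookkeeping, and your offered alternative via $h_2$ and the Hopf invariant of the Whitehead square is a nice addition, but the core argument matches the paper's.
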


\begin{proof}
 Consider the diagram \[\begin{tikzcd}
	{S^{4n-2}} \\
	{\Omega S^{4n-1}} && {\Omega S^{2n}}
	\arrow["E", from=1-1, to=2-1]
	\arrow["{\Omega[id,id]}", from=2-1, to=2-3]
	\arrow["{\left\langle E,E \right\rangle}", from=1-1, to=2-3]
\end{tikzcd}\] where $E:S^{2n-1} \rightarrow \Omega S^{2n}$ is the suspension map. The diagram homotopy commutes since $\left\langle E,E\right\rangle$ is the adjoint of $[id,id]$. Since $E$ induces the inclusion of the generator $\tau \in H_{4n-2}(\Omega S^{4n-1})$, its image under $(\Omega [id,id])_*$ is determined by its image under $\left\langle E,E\right\rangle$. The Samelson product commutes with homology in the sense that $\left\langle E,E\right\rangle_* = \left\langle E_*,E_*\right\rangle$ where the bracket on the right is the commutator in $H_*(\Omega S^{2n}) \cong T(\gamma)$. The map $E$ induces the inclusion of the generator $\gamma \in H_{2n-1}(\Omega S^{2n})$, and so by definition of the commutator, $\left\langle E_*(\gamma),E_*(\gamma)\right\rangle = 2\delta$, where $\delta$ is a generator of $H_{4n-2}(\Omega S^{2n})$.
\end{proof}

\subsection{Hurewicz images}

Let $X$ be a space. An element $x \in H_n(X)$ is said to be in the Hurewicz image if it is in the image of the Hurewicz homomorphism. We will require the following result about the Hurewicz image of even dimensional spheres in a certain degree.

\begin{lemma}
\label{Hurewiczevensphere}
    The Hurewicz image $\pi_{4n-2}(\Omega S^{2n}) \rightarrow H_{4n-2}(\Omega S^{2n})$ when $n \notin \{1,2,4\}$ is $2\mathbb{Z}$.
\end{lemma}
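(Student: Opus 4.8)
The plan is to show that the Hurewicz image, a priori a subgroup of $H_{4n-2}(\Omega S^{2n})\cong\mathbb{Z}$, contains twice a generator but does not contain the generator itself; since every subgroup of $\mathbb{Z}$ is cyclic, this forces the image to be $2\mathbb{Z}$. Write $\delta$ for a generator of $H_{4n-2}(\Omega S^{2n})$, so that $\delta=\gamma^2$ in the notation $H_*(\Omega S^{2n})\cong T(\gamma)$. To see that $2\delta$ lies in the Hurewicz image, let $E\colon S^{4n-2}\to\Omega S^{4n-1}$ be the suspension map, which induces the inclusion of a generator $\tau$ of $H_{4n-2}(\Omega S^{4n-1})$, so that $E_*[S^{4n-2}]=\tau$. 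Composing with $\Omega[id,id]\colon\Omega S^{4n-1}\to\Omega S^{2n}$ and applying Lemma \ref{loopedhopfmapmulti}, the map $\Omega[id,id]\circ E\colon S^{4n-2}\to\Omega S^{2n}$ has Hurewicz image $(\Omega[id,id])_*(\tau)=2\delta$, so $2\delta$, and hence all of $2\mathbb{Z}$, is in the Hurewicz image.

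The harder direction is to show that $\delta$ itself is not in the Hurewicz image; here I would argue by contradiction. Suppose $\alpha\colon S^{4n-2}\to\Omega S^{2n}$ satisfies $\alpha_*[S^{4n-2}]=\delta$, and consider the second James-Hopf map $h_2\colon\Omega S^{2n}\to\Omega S^{4n-1}$ associated to $X=S^{2n-1}$. By Lemma \ref{JamesHopfidentity}, with $n$ there replaced by $2n$, the map $(h_2)_*$ sends $\delta$ to a generator of $H_{4n-2}(\Omega S^{4n-1})$, so $h_2\circ\alpha$ has Hurewicz image a generator of $H_{4n-2}(\Omega S^{4n-1})$. Since $S^{4n-1}$ is $(4n-2)$-connected, $\Omega S^{4n-1}$ is $(4n-3)$-connected, so the Hurewicz homomorphism $\pi_{4n-2}(\Omega S^{4n-1})\to H_{4n-2}(\Omega S^{4n-1})$ is an isomorphism; consequently $h_2\circ\alpha$ represents a generator of $\pi_{4n-2}(\Omega S^{4n-1})\cong\pi_{4n-1}(S^{4n-1})\cong\mathbb{Z}$, namely $\pm E$, whose adjoint is $\pm id_{S^{4n-1}}$. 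Writing $\bar\alpha\colon S^{4n-1}\to S^{2n}$ for the adjoint of $\alpha$, James's identification \cite{J} of the adjoint of $h_2$ with the classical Hopf invariant then shows that $\bar\alpha$ has Hopf invariant $\pm1$. But $n\notin\{1,2,4\}$ gives $2n\notin\{2,4,8\}$, contradicting the Hopf invariant one theorem of Adams \cite{Ad}. Hence $\delta$ is not in the Hurewicz image, and together with the previous paragraph this shows the image equals $2\mathbb{Z}$.

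The step I expect to be the main obstacle is the last one: it relies on James's theorem identifying the adjoint of the iterated James-Hopf map $h_2$ with the classical Hopf invariant, so that Adams' resolution of the Hopf invariant one problem can be brought to bear. The remaining ingredients are the homological computations recorded in Lemmas \ref{loopedhopfmapmulti} and \ref{JamesHopfidentity}, together with a routine connectivity and Hurewicz argument.
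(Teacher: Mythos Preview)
Your argument is correct and is essentially the classical route: you reduce the non-existence of an $\alpha$ with Hurewicz image $\delta$ to Adams' Hopf invariant one theorem via James's identification of $(h_2)_*$ on homotopy groups with the classical Hopf invariant. The paper proceeds differently. It assumes a map $f\colon S^{4n-2}\to\Omega S^{2n}$ with \emph{odd} Hurewicz image, extends it multiplicatively to $\overline{f}\colon\Omega S^{4n-1}\to\Omega S^{2n}$ via the James construction, corrects $\overline{f}$ by a multiple of $\Omega[id,id]$ so that the resulting $\phi$ hits $\delta$ exactly, and then postcomposes with $h_2$ to obtain a self-map $\psi$ of $\Omega S^{4n-1}$ which is the identity on $H_{4n-2}$. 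Theorem~\ref{atomicityoflocalisedloopsphere} at $p=2$ then makes $\psi$ a $2$-local equivalence, exhibiting $\Omega S^{4n-1}$ as a $2$-local retract of $\Omega S^{2n}$; this contradicts the $2$-local atomicity (hence indecomposability) of $\Omega S^{2n}$, again from Theorem~\ref{atomicityoflocalisedloopsphere}.

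Your approach is more direct and avoids the extension/correction manoeuvre, at the cost of importing James's theorem relating $h_2$ to the Hopf invariant (which is not otherwise stated in the paper). The paper's approach stays entirely within the toolkit already set up for Section~\ref{sec:retract}, in particular the atomicity statement Theorem~\ref{atomicityoflocalisedloopsphere}, which is used repeatedly later; in that sense it is more self-contained relative to the rest of the argument. You also make the containment $2\mathbb{Z}\subseteq\mathrm{im}$ explicit via Lemma~\ref{loopedhopfmapmulti}, which the paper leaves implicit.
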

\begin{proof}
    Suppose that $f: S^{4n-2} \rightarrow \Omega S^{2n}$ is a map with odd Hurewicz image. By the universal property of the James construction, there exists an $H$-map $\overline{f}:\Omega S^{4n-1} \rightarrow \Omega S^{2n}$ such that \[\begin{tikzcd}
	{S^{4n-2}} \\
	{\Omega S^{4n-1}} & {\Omega S^{2n}}
	\arrow["E", from=1-1, to=2-1]
	\arrow["f", from=1-1, to=2-2]
	\arrow["{\overline{f}}", from=2-1, to=2-2]
\end{tikzcd}\] homotopy commutes, where $E$ is the suspension map. Let $\tau$ be a generator of $H_{4n-2}(\Omega S^{4n-1})$ and $\delta$ be a generator of $H_{4n-2}(\Omega S^{2n})$. By commutativity of the diagram and the fact that $f$ has odd Hurewicz image, $\overline{f}$ sends $\tau$ to $(2k+1)\delta$ for some $k$. Consider the composite \[\phi:\Omega S^{4n-1} \xrightarrow{\Delta} \Omega S^{4n-1} \times \Omega S^{4n-1} \xrightarrow{p_{-k} \times \overline{f}} \Omega S^{4n-1} \times \Omega S^{2n} \xrightarrow{\Omega[id,id] \times id} \Omega S^{2n} \times \Omega S^{2n} \xrightarrow{\mu} \Omega S^{2n},\] where $p_{-k}$ is the $-k$'th power map. By Lemma ~\ref{loopedhopfmapmulti} and definition of $\phi$, $\phi_*(\tau) = \delta$. Now consider the composite \[\psi:\Omega S^{4n-1} \xrightarrow{\phi} \Omega S^{2n} \xrightarrow{h_2} \Omega S^{4n-1}.\] By Lemma ~\ref{JamesHopfidentity} and definition of $\phi$, $\psi_*(\tau) = \tau$, and so $\psi$ induces an isomorphism on $H_{4n-2}(\Omega S^{4n-1})$. By Theorem ~\ref{atomicityoflocalisedloopsphere}, $\Omega S^{4n-1}$ is $2$-locally atomic. Therefore, we obtain that $\psi$ is a $2$-local homotopy equivalence, implying that $\Omega S^{4n-1}$ retracts off $\Omega S^{2n}$ when localised at $2$. However, by Theorem ~\ref{atomicityoflocalisedloopsphere}, $\Omega S^{2n}$ is $2$-locally atomic, and therefore indecomposable. Hence, $\Omega S^{2n}$ has no non-trivial retracts localised at the prime $2$. 
\end{proof}

\subsection{Preliminary loop space decompositions}

In this section, we state and prove some initial loop space decompositions which will be applied in Section ~\ref{sec:ApptoPP}. Let $K$ be a simplicial complex on $[m]$ and let $L$ be a full subcomplex of $K$ on $[n]$. It is well known (see for example \cite[Lemma 2.2.3]{DS}) that the projection map $\prod\limits_{i=1}^{m} X_i \rightarrow \prod\limits_{j=1}^n X_{j}$ restricts to a map $\uxa^{K} \rightarrow \uxa^{L}$, which is a right inverse for the map $\uxa^L \rightarrow \uxa^K$. Note that a full subcomplex $L'$ of $L$ is also a full subcomplex of $K$, and this fact will often be used without comment.

There are two main results which will be used in Section ~\ref{sec:ApptoPP}. The first result was proved in \cite[Theorem 7.2]{GT1}. If $X$ and $Y$ have basepoints $x_0$ and $y_0$ respectively,  the \textit{right half-smash} is defined by $X \rtimes Y = X \times Y/(* \times Y)$ and the \textit{left half-smash} is defined by $X \ltimes Y = X \times Y/(X \times *)$. The \textit{reduced join} is defined by $X * Y = (X \times I \times Y)/\sim$, where $I$ is the unit interval, $(x,0,y) \sim (x,0,y')$, $(x,1,y) \sim (x',1,y)$ and $(x_0,t,y_0) \sim (x_0,0,y_0)$ for all $x,x' \in X$, $y,y' \in Y$ and $t \in I$.

\begin{proposition}
\label{emptysetdecomposition}
    Let $K_1$ be a simplicial complex on the vertex set $\{1,\cdots m\}$, $K_2$ a simplicial complex on the vertex set $\{\ell+1,\cdots,n\}$, and $\tau$ be a common face of $K_1$ and $K_2$ on the vertex set $\{\ell+1,\cdots,m\}$, where $\ell < m < n$. Then there is a homotopy equivalence \[\caa^{K_1 \cup_\tau K_2} \simeq (\mathcal{A}*\mathcal{A}') \vee (\caa^{K_1} \rtimes \mathcal{A}') \vee (\mathcal{A} \ltimes \caa^{K_2})\] where $\mathcal{A} = \prod_{i=1}^{\ell} A_i$ and $\mathcal{A}' = \prod_{i=m+1}^{n} A_i$.
    \qedno
\end{proposition}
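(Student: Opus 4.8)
The plan is to realise $\caa^{K_1\cup_\tau K_2}$ as an explicit homotopy pushout and then to decompose it. A face of $K_1\cup_\tau K_2$ is a face of $K_1$ or a face of $K_2$, and since the vertex sets $\{1,\dots,\ell\}$ and $\{m+1,\dots,n\}$ are disjoint one checks directly, inside $\prod_{i=1}^n CA_i$, that $\caa^{K_1\cup_\tau K_2}=(\caa^{K_1}\times\mathcal{A}')\cup(\mathcal{A}\times\caa^{K_2})$. Because $\tau$ has vertex set all of $\{\ell+1,\dots,m\}$ and is a face of both $K_1$ and $K_2$, every subset of $\{\ell+1,\dots,m\}$ is a common face, from which one deduces that the two pieces meet exactly in $\mathcal{A}\times C\times\mathcal{A}'$, where $C=\prod_{i=\ell+1}^m CA_i$ is contractible. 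As $(CA_i,A_i)$ are $CW$-pairs the two inclusions of this intersection are cofibrations, so the union is a homotopy pushout.

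\textbf{Reducing the gluing data.} Since $\{\ast\}\hookrightarrow C$ is a cofibration and a homotopy equivalence, the homotopy pushout is unchanged if the corner $\mathcal{A}\times C\times\mathcal{A}'$ is replaced by $\mathcal{A}\times\mathcal{A}'$; the gluing maps then become $\iota\times\mathrm{id}_{\mathcal{A}'}$ into $\caa^{K_1}\times\mathcal{A}'$ and $\mathrm{id}_{\mathcal{A}}\times\iota'$ into $\mathcal{A}\times\caa^{K_2}$, where $\iota$ includes $\mathcal{A}=\prod_{i=1}^\ell A_i$ into $\prod_{i=1}^m A_i=\caa^{\emptyset}\subseteq\caa^{K_1}$ with the $\tau$-coordinates at the basepoint, and $\iota'$ is the analogous inclusion of $\mathcal{A}'$ into $\caa^{\emptyset}\subseteq\caa^{K_2}$. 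The crucial point is that $\iota$ and $\iota'$ are null-homotopic. This follows because the inclusion $\caa^{\emptyset}=\prod_i A_i\hookrightarrow\caa^{K_1}$ is itself null-homotopic: a point of $\caa^{\emptyset}$ can be deformed to the basepoint by pushing its coordinates into the cones $CA_i$ one vertex at a time, the intermediate configurations lying successively in $\caa^{\{i\}}\subseteq\caa^{K_1}$ (here one uses that $K_1$ contains every singleton), and likewise for $K_2$. I expect this null-homotopy, and checking that it is what survives the replacement above, to be the main obstacle.

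\textbf{Computing the pushout.} Replace $\iota\times\mathrm{id}$ and $\mathrm{id}\times\iota'$ by the homotopic maps $(a,a')\mapsto(\ast,a')$ and $(a,a')\mapsto(a,\ast)$. Each factors through a coordinate projection of $\mathcal{A}\times\mathcal{A}'$ followed by the inclusion $b\colon\mathcal{A}'\hookrightarrow\caa^{K_1}\times\mathcal{A}'$ of the basepoint slice $\{\ast\}\times\mathcal{A}'$, respectively $c\colon\mathcal{A}\hookrightarrow\mathcal{A}\times\caa^{K_2}$ of $\mathcal{A}\times\{\ast\}$. Applying the pasting law for homotopy pushouts twice, to remove the two projections, leaves the homotopy pushout of $\mathcal{A}'\xleftarrow{\mathrm{pr}}\mathcal{A}\times\mathcal{A}'\xrightarrow{\mathrm{pr}}\mathcal{A}$, which is homotopy equivalent to the reduced join $\mathcal{A}*\mathcal{A}'$ and whose canonical maps out of $\mathcal{A}$ and out of $\mathcal{A}'$ are null-homotopic, the canonical map of a factor into a join being null. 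The remaining two homotopy pushouts therefore each have a null-homotopic leg, and a homotopy pushout of $X\xleftarrow{\mathrm{const}}Z\xrightarrow{h}Y$ is $X\vee C_h$ with $C_h$ the mapping cone of $h$; since $b$ and $c$ are cofibrations their mapping cones are $C_b\simeq(\caa^{K_1}\times\mathcal{A}')/(\{\ast\}\times\mathcal{A}')=\caa^{K_1}\rtimes\mathcal{A}'$ and $C_c\simeq(\mathcal{A}\times\caa^{K_2})/(\mathcal{A}\times\{\ast\})=\mathcal{A}\ltimes\caa^{K_2}$. Assembling the two applications gives $\caa^{K_1\cup_\tau K_2}\simeq(\mathcal{A}*\mathcal{A}')\vee(\caa^{K_1}\rtimes\mathcal{A}')\vee(\mathcal{A}\ltimes\caa^{K_2})$.

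Besides the null-homotopy above, the delicate part is the bookkeeping in the last step: through the successive uses of the pasting law one must track exactly which canonical maps into the intermediate homotopy pushouts are null-homotopic, and one should work with well-pointed spaces throughout (all spaces here are $CW$-complexes) so that homotopy pushouts may be computed as ordinary pushouts and mapping cones of cofibrations as quotients.
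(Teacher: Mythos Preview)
The paper does not prove this proposition; it is quoted without proof from \cite[Theorem~7.2]{GT1}. Your argument is correct and is essentially the one given there: identify $\caa^{K_1\cup_\tau K_2}$ as the homotopy pushout of $\caa^{K_1}\times\mathcal{A}'$ and $\mathcal{A}\times\caa^{K_2}$ along $\mathcal{A}\times\mathcal{A}'$, observe that the structure maps are null-homotopic in the $\caa^{K_j}$-factor because the inclusion $\prod_i A_i\hookrightarrow\caa^{K_j}$ can be coned off one coordinate at a time through the subspaces $\caa^{\{i\}}$, and then decompose the pushout via the pasting law together with the identification of the join as the homotopy pushout of the two projections from a product. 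Your caveats about tracking the induced maps and working with well-pointed $CW$-complexes are exactly the technical points that need care, but none of them presents an obstruction here.
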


The next main result is from \cite[Theorem 1.1]{T1}.

\begin{proposition}
\label{fullsubcomplexdecomp}
Let $K_1$ be a simplicial complex on the vertex set $\{1,\cdots,m\}$, $K_2$ a simplicial complex on the vertex set $\{\ell+1,\cdots,n\}$, and $L$ a full subcomplex of both $K_1$ and $K_2$ on the vertex set $\{\ell+1,\cdots,m\}$, where $\ell < m < n$. Then there is a homotopy fibration \[(\mathcal{A}*\mathcal{A'}) \vee (G \rtimes \mathcal{A}') \vee (\mathcal{A} \ltimes H) \rightarrow \caa^{K_1\cup_L K_2} \rightarrow \caa^L\] where $\mathcal{A} = \prod_{i=1}^\ell A_i$, $\mathcal{A}' = \prod_{j=m+1}^n A_i$, and $G$ and $H$ are the homotopy fibres of the retractions $\caa^{K_1} \rightarrow \caa^L$ and $\caa^{K_2} \rightarrow \caa^L$ respectively. Further, this fibration splits after looping to give a homotopy equivalence \[\Omega\caa^{K_1 \cup_L K_2} \simeq \Omega \caa^L \times \Omega((\mathcal{A} * \mathcal{A}') \vee (G \rtimes \mathcal{A}') \vee (\mathcal{A} \ltimes H)).\eqno\qed\]
\end{proposition}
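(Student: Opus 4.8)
The plan is the following. Since $L$ is a full subcomplex of $K_1\cup_L K_2$, the coordinate projection $\prod_{i=1}^n X_i\to\prod_{i=\ell+1}^m X_i$ restricts to a retraction $q\colon\caa^{K_1\cup_L K_2}\to\caa^L$ with section induced by $L\hookrightarrow K_1\cup_L K_2$; write $F$ for its homotopy fibre. Once $F$ is identified, the looped splitting is formal: looping a fibration that has a section gives a fibration $\Omega F\to\Omega\caa^{K_1\cup_L K_2}\to\Omega\caa^L$ with a section, and the map $\Omega F\times\Omega\caa^L\to\Omega\caa^{K_1\cup_L K_2}$ obtained by multiplying the looped fibre inclusion against the looped section is a map of fibrations over $\Omega\caa^L$ which is a homotopy equivalence on fibres, hence a homotopy equivalence. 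So the task reduces to identifying $F$.

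First I would exhibit $\caa^{K_1\cup_L K_2}$ as a homotopy pushout. Regarding $K_1$, $K_2$ and $L$ as complexes on $[n]$ by adjoining ghost vertices, fullness of $L$ in $K_1$ and in $K_2$ forces $K_1\cap K_2=L$, while $K_1\cup K_2=K_1\cup_L K_2$; since polyhedral products on a fixed vertex set send unions and intersections of complexes to unions and intersections of subspaces of $\prod_{i=1}^n X_i$ along cofibrations, and since adjoining the ghost vertices $1,\dots,\ell$, respectively $m+1,\dots,n$, multiplies the polyhedral product by $\mathcal{A}$, respectively $\mathcal{A}'$, we obtain a homotopy pushout of the subspace inclusions $\caa^{K_1}\times\mathcal{A}'\hookleftarrow\mathcal{A}\times\caa^L\times\mathcal{A}'\hookrightarrow\mathcal{A}\times\caa^{K_2}$ with pushout $\caa^{K_1\cup_L K_2}$. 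Each corner maps to $\caa^L$ compatibly: the middle term by projection onto the $\caa^L$ factor, the two outer terms by projection followed by the retractions $r_1\colon\caa^{K_1}\to\caa^L$ and $r_2\colon\caa^{K_2}\to\caa^L$, all three restricting $q$. Applying Mather's cube theorem — the homotopy fibres over a fixed base of the corners of a homotopy pushout again form a homotopy pushout — and computing the fibres of the three corners over $\caa^L$ as $\mathcal{A}\times\mathcal{A}'$, $G\times\mathcal{A}'$ and $\mathcal{A}\times H$, identifies $F$ with the homotopy pushout of $G\times\mathcal{A}'\xleftarrow{\,\iota\times\mathrm{id}\,}\mathcal{A}\times\mathcal{A}'\xrightarrow{\,\mathrm{id}\times\kappa\,}\mathcal{A}\times H$, where $\iota\colon\mathcal{A}\to G$ and $\kappa\colon\mathcal{A}'\to H$ are the maps induced on homotopy fibres.

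The decisive step — and the one I expect to be the main obstacle — is that $\iota$ and $\kappa$ are null homotopic. The underlying map $\mathcal{A}\hookrightarrow\caa^{K_1}$ is the inclusion of the slice $A_1\times\cdots\times A_\ell\times\{\ast\}^{m-\ell}$, which lies in the component indexed by the empty simplex. Because each $\{i\}$ with $1\le i\le\ell$ is a vertex of $K_1$, the $i$-th coordinate may be homotoped into the cone $CA_i$ inside the component indexed by $\{i\}$ and then contracted to the basepoint; carrying this out for $i=1,\dots,\ell$ contracts $\mathcal{A}$ to a point in $\caa^{K_1}$, and since no coordinate in $\{\ell+1,\dots,m\}$ is ever moved, the composite of this contraction with $r_1$ is constant, so it lifts to a null homotopy of $\iota$. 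The argument for $\kappa$ is symmetric, using the vertices $m+1,\dots,n$ of $K_2$. Carrying out this bookkeeping — in particular verifying that the contracting homotopies can be kept fibrewise over $\caa^L$ — is where the care is needed; without it the pushout would not collapse to a wedge.

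Finally I would compute the homotopy pushout. With $\iota$ and $\kappa$ replaced by constant maps, the middle block $\mathcal{A}\times\mathcal{A}'\times I$ of the double mapping cylinder has its $\mathcal{A}$-factor collapsed at one end and its $\mathcal{A}'$-factor collapsed at the other, so it is the join $\mathcal{A}\ast\mathcal{A}'$; it is attached to $G\times\mathcal{A}'$ along the copy of $\mathcal{A}'$ sitting at one end of the join, and to $\mathcal{A}\times H$ along the copy of $\mathcal{A}$ at the other end, and both of these end-inclusions into a join are null homotopic. Using the elementary fact that a homotopy pushout one of whose legs is null homotopic is the wedge of the other corner with the cofibre of the remaining leg, together with the identifications $(G\times\mathcal{A}')/(\ast\times\mathcal{A}')=G\rtimes\mathcal{A}'$ and $(\mathcal{A}\times H)/(\mathcal{A}\times\ast)=\mathcal{A}\ltimes H$, this gives $F\simeq(\mathcal{A}\ast\mathcal{A}')\vee(G\rtimes\mathcal{A}')\vee(\mathcal{A}\ltimes H)$, and hence, by the formal looping argument, the stated fibration and its splitting.
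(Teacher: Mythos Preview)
The paper does not prove this proposition; it is quoted from \cite[Theorem~1.1]{T1} and marked with a \qed. Your argument is a correct reconstruction of that proof: express $\caa^{K_1\cup_L K_2}$ as a homotopy pushout of $\caa^{K_1}\times\mathcal{A}'\leftarrow\mathcal{A}\times\caa^L\times\mathcal{A}'\rightarrow\mathcal{A}\times\caa^{K_2}$, apply Mather's cube theorem over $\caa^L$ to identify the fibre as a pushout of $G\times\mathcal{A}'\leftarrow\mathcal{A}\times\mathcal{A}'\rightarrow\mathcal{A}\times H$, observe that the induced maps $\iota$ and $\kappa$ are null because the inclusions $\mathcal{A}\hookrightarrow\caa^{K_1}$ and $\mathcal{A}'\hookrightarrow\caa^{K_2}$ contract through the cone coordinates without moving the $L$-coordinates, and then collapse the resulting pushout to the stated wedge. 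This is exactly the strategy in~\cite{T1}; your staged pushout calculation (first extracting the join, then peeling off the two half-smashes as cofibres of the null end-inclusions) is the standard way that computation is carried out there.
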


\begin{remark}
The loop of the decomposition in Proposition ~\ref{emptysetdecomposition} can be obtained from Proposition ~\ref{fullsubcomplexdecomp}. However, the proof of Proposition ~\ref{fullsubcomplexdecomp} requires that $M$ is non-empty, whereas in Proposition ~\ref{emptysetdecomposition}, $\tau$ can be the empty set.
\end{remark}

The aim of Section ~\ref{sec:ApptoPP} is to use the decompositions in Proposition ~\ref{emptysetdecomposition} and Proposition ~\ref{fullsubcomplexdecomp} to show that the property of loop spaces of polyhedral products being in $\mathcal{P}$ is closed under taking pushouts of simplicial complexes over a common full subcomplex. First, we give a decomposition of $\Omega(X \ltimes Y)$ for spaces $X$ and $Y$. Observe there is a projection map $X \ltimes Y \rightarrow Y$ given by projecting onto $Y$.

\begin{lemma}
\label{loophalfsmash}
Let $X$ and $Y$ be path-connected, $CW$-complexes. Then there exists a homotopy fibration \[X * \Omega Y \rightarrow X \ltimes Y \rightarrow Y.\] Furthermore, this splits after looping to give a homotopy equivalence \[\Omega(X \ltimes Y) \simeq \Omega (X * \Omega Y) \times \Omega Y.\]
\end{lemma}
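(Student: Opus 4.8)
The plan is to exhibit the homotopy fibration directly from a standard cofibration involving the half-smash, and then to split it after looping. First I would recall that the projection $X \ltimes Y \to Y$ (induced by $X \times Y \to Y$) has a section, namely the map $Y \to X \ltimes Y$ sending $y \mapsto (x_0, y)$; this is well-defined because the subspace $X \times * $ that is collapsed maps to the basepoint of $Y$. So the fibration, once established, is split, and Proposition-style reasoning about looping a split fibration will give the product decomposition. The key input is the identification of the homotopy fibre of $X \ltimes Y \to Y$ as $X * \Omega Y$. To see this, I would use the fact that $X \ltimes Y$ is the cofibre of the inclusion $X \vee Y \to X \times Y$ restricted appropriately — more precisely $X \ltimes Y \simeq X \times Y / (X \times *)$, and there is a well-known homotopy equivalence $X \ltimes Y \simeq Y \cup_{(X \times \Omega Y \text{ action})} \cdots$; the cleanest route is to observe that $X \ltimes Y$ fits into a homotopy pushout with $Y$, $CX \times Y \simeq Y$, and $X \times Y$, so that $X \ltimes Y$ is the homotopy pushout of $Y \leftarrow X \times Y \to CX \ltimes Y$.

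Concretely, the step I would carry out is: the half-smash $X \ltimes Y$ is the mapping cone of the projection $X \times Y \to Y$ composed into... no — better, use that $X \ltimes Y = (X \times Y)/(X \times *) $ sits in a homotopy pushout square with corners $X \times Y$, $Y$ (quotient by $X \times *$ using that $X$ is pointed), $X$ (quotient by $* \times Y$), and the pushout being $\Sigma X \wedge \cdots$; since the cleaner statement is what I want, I would instead invoke the homotopy pushout
\[
\begin{tikzcd}
X \times Y \arrow[r] \arrow[d] & Y \arrow[d] \\
X \arrow[r] & \Sigma(X \wedge Y)/\!\!\sim
\end{tikzcd}
\]
and recognize that collapsing only $X \times *$ gives $X \ltimes Y$ which is the homotopy pushout of $* \leftarrow X \to X \ltimes Y$... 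This is getting circular, so let me state the intended argument plainly: apply the Mather cube / cube lemma to the map $X \times Y \to Y$. The homotopy fibre of $X \times Y \to Y$ over $Y$ is $X$. The half-smash $X \ltimes Y$ is obtained by coning off the fibre copy $X = X \times *$. By a theorem on coning off fibres in a fibration (the James–Gray type result, or Ganea's theorem), the homotopy fibre of the resulting map $X \ltimes Y \to Y$ is $X * \Omega Y$, since coning off the fibre $F$ in a fibration $F \to E \to B$ yields a new fibration whose fibre is $F * \Omega B$. That is exactly Ganea's fibre-cofibre construction.

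So the key steps in order are: (1) identify $X \ltimes Y$ as the result of applying Ganea's fibre-cofibre construction to the trivial fibration $X \to X \times Y \to Y$, hence obtain the fibration $X * \Omega Y \to X \ltimes Y \to Y$ with $X * \Omega Y$ the homotopy fibre; (2) observe the projection $X \ltimes Y \to Y$ admits the section $y \mapsto [(x_0,y)]$; (3) conclude that the looped fibration $\Omega(X * \Omega Y) \to \Omega(X \ltimes Y) \to \Omega Y$ is a split fibration of loop spaces, and any split fibration of loop spaces with a loop-map section yields a homotopy equivalence $\Omega(X \ltimes Y) \simeq \Omega(X * \Omega Y) \times \Omega Y$ (using that $\Omega$ of the section is a multiplicative section, so the standard splitting of principal-type fibrations applies, e.g. via the map $\Omega(X * \Omega Y) \times \Omega Y \to \Omega(X \ltimes Y)$ given by multiplying the fibre inclusion with the looped section). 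The path-connectedness hypotheses on $X$ and $Y$ ensure all the spaces in sight are connected and the join $X * \Omega Y$ is simply connected, so no subtleties with the fundamental group arise. The main obstacle is step (1): correctly invoking the fibre-cofibre construction and verifying that coning off the fibre of the \emph{trivial} fibration $X \times Y \to Y$ produces precisely $X \ltimes Y$ (not some other homotopy pushout) and that the resulting fibre is $X * \Omega Y$; this requires care with basepoints and with the identification $CX \times Y / (CX \times *) \simeq Y$, but it is a standard application once set up correctly.
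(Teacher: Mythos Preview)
Your proposal is correct and is essentially the same argument as the paper's. Both identify $X \ltimes Y$ as the space obtained by coning off the fibre in the trivial fibration $X \to X \times Y \to Y$ and then use the Ganea-type identification of the new homotopy fibre as $X * \Omega Y$; the paper carries this out by writing down the diagram of fibrations over $Y$ and invoking Farjoun's homotopy-pushout-of-fibrations result (which is precisely how one proves the Ganea fibre--cofibre statement), whereas you cite the Ganea construction directly. The splitting via the section $Y \hookrightarrow X \ltimes Y$ is identical in both.
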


\begin{proof}
Consider the commutative diagram \[\begin{tikzcd}
	{\Omega Y} & {X \times \Omega Y} & X \\
	{*} & X & {X \times Y} \\
	Y & Y & Y
	\arrow[Rightarrow, no head, from=3-1, to=3-2]
	\arrow[Rightarrow, no head, from=3-3, to=3-2]
	\arrow["{i_X}", from=2-2, to=2-3]
	\arrow[from=2-2, to=2-1]
	\arrow[from=2-1, to=3-1]
	\arrow["{*}", from=2-2, to=3-2]
	\arrow["{\pi_Y}", from=2-3, to=3-3]
	\arrow["{\pi_X}", from=1-2, to=2-2]
	\arrow["{\pi_X}", from=1-2, to=1-3]
	\arrow["{i_X}", from=1-3, to=2-3]
	\arrow["{\pi_{\Omega Y}}"', from=1-2, to=1-1]
	\arrow[from=1-1, to=2-1]
\end{tikzcd}\] where the columns are homotopy fibrations, $i_X$ is the inclusion and $\pi_X$, $\pi_Y$ and $\pi_{\Omega Y}$ are the projections onto $X$, $Y$ and $\Omega Y$ respectively. Observe that the homotopy pushout of the top row is $X * \Omega Y$, the homotopy pushout of the middle row is $X \ltimes Y$ and the induced map from $X \ltimes Y$ to $Y$ is the projection map. Therefore by \cite[p.180]{F}, there is a homotopy fibration \[X * \Omega Y \rightarrow X \ltimes Y \rightarrow Y.\] Moreover, the projection $X \ltimes Y \rightarrow Y$ has a right homotopy inverse given by the inclusion map $Y \hookrightarrow X \ltimes Y$, which implies that there is a homotopy equivalence \[\Omega(X \ltimes Y) \simeq \Omega (X * \Omega Y) \times \Omega Y.\qedhere\]
\end{proof} 

Before determining conditions on $X$ and $Y$ for $\Omega (X \ltimes Y)$ to be in $\mathcal{P}$, we prove some relations between spaces in $\mathcal{W}$ and spaces in $\mathcal{P}$.

\begin{lemma}
\label{joinofWandPinW}
    Let $X$ be a space such that $\Sigma X \in \mathcal{W}$ and let $A_1,\cdots,A_m$ be spaces in $\mathcal{P}$, then \[\Sigma (X \wedge A_1 \wedge \cdots \wedge A_m) \in \mathcal{W}.\]
\end{lemma}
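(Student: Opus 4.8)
The plan is to induct on $m$, with the case $m=1$ containing essentially all of the work. For the base case, I would start from the natural homeomorphism $\Sigma(X \wedge A_1) \cong (\Sigma X) \wedge A_1$. Since $\Sigma X \in \mathcal{W}$, fix a homotopy equivalence $\Sigma X \simeq \bigvee_{\alpha} S^{n_\alpha}$ with each $n_\alpha \geq 2$ and the wedge of finite type. Smashing with the $CW$-complex $A_1$ commutes with wedges and preserves homotopy equivalences, so
\[
\Sigma(X \wedge A_1) \simeq \bigvee_{\alpha}\bigl(S^{n_\alpha} \wedge A_1\bigr) \simeq \bigvee_{\alpha} \Sigma^{\,n_\alpha - 1}(\Sigma A_1).
\]
Now I would invoke the fact recalled in the introduction (a consequence of the James construction together with the splitting of the suspension of a product) that suspension carries $\mathcal{P}$ into $\mathcal{W}$; applied to $A_1 \in \mathcal{P}$ this gives $\Sigma A_1 \in \mathcal{W}$, say $\Sigma A_1 \simeq \bigvee_{\beta} S^{k_\beta}$ with each $k_\beta \geq 2$ and the wedge of finite type. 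Substituting, $\Sigma(X \wedge A_1) \simeq \bigvee_{\alpha,\beta} S^{n_\alpha + k_\beta - 1}$, each exponent being at least $3$, so this is a wedge of simply connected spheres.

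It then remains to check this wedge is of finite type, and here I would use the finite type of \emph{both} $\Sigma X$ and $\Sigma A_1$: in a fixed degree $d$ the spheres of dimension $d$ correspond to pairs $(\alpha,\beta)$ with $n_\alpha + k_\beta - 1 = d$, which forces $n_\alpha \leq d-1$; finite type of $\Sigma X$ leaves only finitely many such $\alpha$, and for each, finite type of $\Sigma A_1$ leaves only finitely many $\beta$. Hence $\Sigma(X \wedge A_1) \in \mathcal{W}$, completing the base case.

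For the inductive step, assume the statement for $m-1$ smash factors and set $X' = X \wedge A_1 \wedge \cdots \wedge A_{m-1}$. Applying the inductive hypothesis to $X$ and $A_1,\dots,A_{m-1}$ gives $\Sigma X' \in \mathcal{W}$, and then the base case applied to the space $X'$ and the $\mathcal{P}$-space $A_m$ yields $\Sigma(X' \wedge A_m) = \Sigma(X \wedge A_1 \wedge \cdots \wedge A_m) \in \mathcal{W}$.

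I do not expect a genuine obstacle: the argument is bookkeeping once the two structural inputs ($\Sigma X \in \mathcal{W}$ by hypothesis, and $\Sigma A_i \in \mathcal{W}$ from $A_i \in \mathcal{P}$) are in place. The only points needing care are the point-set facts that smashing with a $CW$-complex distributes over (possibly infinite) wedges and respects homotopy equivalence — harmless since every space in sight is $CW$ — and the finite-type count above, where it is essential that the wedge decomposition of $\Sigma X$, and not merely that of $\Sigma A_1$, be of finite type.
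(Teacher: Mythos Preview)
Your proof is correct and follows essentially the same approach as the paper: both argue by induction on $m$, use the homeomorphism $\Sigma(X \wedge A) \cong (\Sigma X)\wedge A$, invoke $A_i \in \mathcal{P} \Rightarrow \Sigma A_i \in \mathcal{W}$, and then distribute smash over wedge. The only differences are cosmetic --- the paper peels off $A_1$ rather than $A_m$ in the inductive step, and you are more explicit about the finite-type verification, which the paper leaves implicit.
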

\begin{proof}
 We proceed by induction. First consider the case $m = 1$. Since $A_1 \in \mathcal{P}$, $\Sigma A_1 \in \mathcal{W}$. There is a homeomorphism $\Sigma (X \wedge A_1) \cong X \wedge \Sigma A_1$. Therefore, distributing the wedge sum over the smash product implies $\Sigma (X \wedge A_1) \in \mathcal{W}$.

    Now suppose the result is true for $1 \leq m \leq k-1$ and consider the case $m = k$. There are  homeomorphisms \[\Sigma (X \wedge A_1 \wedge \cdots \wedge A_m) \cong \Sigma (A_1 \wedge X \wedge A_2 \wedge \cdots \wedge A_m) \cong A_1 \wedge \Sigma(X \wedge A_2 \wedge \cdots \wedge A_m).\] The inductive hypothesis implies $\Sigma (X \wedge A_2 \wedge  \cdots \wedge A_m) \in \mathcal{W}$. Therefore, $\Sigma (X \wedge A_2 \wedge  \cdots \wedge A_m) \simeq \Sigma W$ where $W$ is a wedge of spheres. Hence, there is a homotopy equivalence \[\Sigma (X \wedge A_1 \wedge \cdots \wedge A_m) \simeq X \wedge \Sigma W.\]  Since $\Sigma X \in \mathcal{W}$ by assumption, shifting the suspension coordinate and distributing the smash product over the wedge sum implies $X \wedge \Sigma W \in \mathcal{W}$.
\end{proof}

\begin{lemma}
\label{loophalfsmashinP}
Let $X$ and $Y$ be path-connected $CW$-complexes such that $\Sigma X \in \mathcal{W}$ and $\Omega Y \in \mathcal{P}$. Then \[\Omega (X \ltimes Y) \in \mathcal{P}.\]
\end{lemma}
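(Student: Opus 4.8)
The plan is to combine the loop-space splitting of the left half-smash proved in Lemma~\ref{loophalfsmash} with the wedge-of-spheres criterion of Lemma~\ref{joinofWandPinW} and the Hilton--Milnor theorem. First I would apply Lemma~\ref{loophalfsmash} to get the homotopy equivalence
\[
\Omega(X \ltimes Y) \simeq \Omega(X * \Omega Y) \times \Omega Y.
\]
By hypothesis $\Omega Y \in \mathcal{P}$, and $\mathcal{P}$ is closed under finite products (a finite product of finite type products of spheres and loops on simply connected spheres is again one, and a product of $H$-spaces is an $H$-space). Hence it suffices to show $\Omega(X * \Omega Y) \in \mathcal{P}$.

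Next I would use the standard homotopy equivalence $X * Z \simeq \Sigma(X \wedge Z)$ for well-pointed $CW$-complexes, applied with $Z = \Omega Y$ (which has the homotopy type of a $CW$-complex by Milnor's theorem), to rewrite $X * \Omega Y \simeq \Sigma(X \wedge \Omega Y)$. Thus the goal becomes $\Omega \Sigma(X \wedge \Omega Y) \in \mathcal{P}$. Since the Hilton--Milnor theorem shows that looping carries $\mathcal{W}$ into $\mathcal{P}$, it is enough to verify $\Sigma(X \wedge \Omega Y) \in \mathcal{W}$. This is precisely Lemma~\ref{joinofWandPinW} in the case $m = 1$ with $A_1 = \Omega Y$: since $\Sigma X \in \mathcal{W}$ and $\Omega Y \in \mathcal{P}$, that lemma gives $\Sigma(X \wedge \Omega Y) \in \mathcal{W}$, and we are done.

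I do not expect a genuine obstacle: the statement is essentially a bookkeeping assembly of the half-smash splitting, the join--suspension identity, Lemma~\ref{joinofWandPinW}, and Hilton--Milnor. The only points requiring care are the standing conventions that all spaces in sight have the homotopy type of $CW$-complexes (so that the join--suspension equivalence and Lemma~\ref{loophalfsmash} apply), the path-connectivity needed to form $\Omega Y$ and to invoke Hilton--Milnor, and the explicit observation that $\mathcal{P}$ is closed under finite products so that the factor $\Omega Y$ coming from Lemma~\ref{loophalfsmash} can be reabsorbed.
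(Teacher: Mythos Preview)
Your proposal is correct and follows essentially the same route as the paper: apply Lemma~\ref{loophalfsmash} for the splitting $\Omega(X\ltimes Y)\simeq \Omega(X*\Omega Y)\times \Omega Y$, rewrite $X*\Omega Y\simeq \Sigma(X\wedge\Omega Y)$, invoke Lemma~\ref{joinofWandPinW} with $m=1$ to place $\Sigma(X\wedge\Omega Y)$ in $\mathcal{W}$, and then use Hilton--Milnor. The paper's proof is just a terser version of exactly this argument.
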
\begin{proof}
 By Lemma ~\ref{loophalfsmash}, $\Omega (X \ltimes Y) \simeq \Omega(X *\Omega Y) \times \Omega Y$. Since $X *\Omega Y \simeq \Sigma (X \wedge  \Omega Y)$, $\Sigma X \in \mathcal{W}$ and $\Omega Y \in \mathcal{P}$, Lemma ~\ref{joinofWandPinW} implies $\Sigma (X \wedge \Omega Y) \in \mathcal{W}$. Therefore the Hilton-Milnor theorem \cite{M} implies $\Omega (\Sigma (X \wedge \Omega Y)) \in \mathcal{P}$, and so $\Omega (X \ltimes Y) \in \mathcal{P}$.
\end{proof}

Now we state a result of Porter \cite[Theorem 1]{P} which gives a loop space decomposition of a wedge of spaces. Let $X$ be a pointed space. Denote by $X^{\vee k}$ the $k$-fold wedge sum of $X$ with itself.

\begin{lemma}
\label{incwedgeintoprod}
Let $X_1, \cdots X_m$ be path-connected $CW$-complexes. Then there exists a homotopy fibration \[\bigvee\limits_{k=2}^m \bigvee\limits_{1 \leq i_1 < \cdots < i_k \leq m} (\Sigma \Omega X_{i_1} \wedge \cdots \wedge \Omega X_{i_k})^{\vee(k-1)} \rightarrow \bigvee\limits_{i=1}^m X_i \hookrightarrow \prod\limits_{i=1}^m X_i.\] Moreover, this splits after looping.
\qedno
\end{lemma}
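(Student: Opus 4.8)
The plan is to treat the two assertions separately. The ``splits after looping'' statement is the soft part. Write $\iota\colon\bigvee_{i=1}^{m}X_i\hookrightarrow\prod_{i=1}^{m}X_i$ for the inclusion and $F$ for its homotopy fibre. It suffices to produce a right homotopy inverse for $\Omega\iota\colon\Omega\bigvee_{i=1}^{m}X_i\to\prod_{i=1}^{m}\Omega X_i$; looping the inclusions $X_i\hookrightarrow\bigvee_{i=1}^{m}X_i$ and multiplying the resulting loops together in $\Omega\bigvee_{i=1}^{m}X_i$ gives such a map, since under $\Omega\iota$ followed by projection onto the $i$-th factor the loops coming from the other summands become constant. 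Hence $\Omega\bigvee_{i=1}^{m}X_i\simeq(\prod_{i=1}^{m}\Omega X_i)\times\Omega F$, and the whole statement reduces to identifying $F$ with the wedge $R_m\coloneqq\bigvee_{k=2}^{m}\bigvee_{1\le i_1<\cdots<i_k\le m}(\Sigma\Omega X_{i_1}\wedge\cdots\wedge\Omega X_{i_k})^{\vee(k-1)}$. This identification is Porter's theorem \cite[Theorem 1]{P}, and it is where the work lies.

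I would prove it by induction on $m$. The case $m=2$ is the classical fibration $\Sigma(\Omega X_1\wedge\Omega X_2)\to X_1\vee X_2\to X_1\times X_2$, which can be deduced from Lemma \ref{loophalfsmash}: the homotopy fibre of the pinch map $X_1\vee X_2\to X_1$ is the half-smash $X_2\rtimes\Omega X_1\cong\Omega X_1\ltimes X_2$ (a standard computation), comparison of this fibration with the projection fibration $X_2\to X_1\times X_2\to X_1$ identifies the homotopy fibre of $\iota$ with the homotopy fibre of the induced map $\Omega X_1\ltimes X_2\to X_2$ on fibres, and by Lemma \ref{loophalfsmash} the latter is $\Omega X_1*\Omega X_2\simeq\Sigma(\Omega X_1\wedge\Omega X_2)$. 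For the inductive step, factor $\iota$ as $\bigvee_{i=1}^{m}X_i=(\bigvee_{i=1}^{m-1}X_i)\vee X_m\hookrightarrow(\bigvee_{i=1}^{m-1}X_i)\times X_m\hookrightarrow(\prod_{i=1}^{m-1}X_i)\times X_m$. By the case $m=2$, the homotopy fibre of the first inclusion is $\Sigma(\Omega(\bigvee_{i=1}^{m-1}X_i)\wedge\Omega X_m)$, while the homotopy fibre of the second is the homotopy fibre $F_{m-1}$ of $\bigvee_{i=1}^{m-1}X_i\hookrightarrow\prod_{i=1}^{m-1}X_i$; the fibre sequence of a composite then gives a homotopy fibration $\Sigma(\Omega(\bigvee_{i=1}^{m-1}X_i)\wedge\Omega X_m)\to F\to F_{m-1}$ whose projection admits a section coming from the inclusion $\bigvee_{i=1}^{m-1}X_i\hookrightarrow\bigvee_{i=1}^{m}X_i$.

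The hard part is to pass from this fibration to the clean wedge $R_m$, i.e.\ to show that each $k$-fold smash $\Sigma\Omega X_{i_1}\wedge\cdots\wedge\Omega X_{i_k}$ appears with multiplicity exactly $k-1$. Substituting the inductive decompositions $F_{m-1}\simeq R_{m-1}$ and $\Omega(\bigvee_{i=1}^{m-1}X_i)\simeq(\prod_{i<m}\Omega X_i)\times\Omega R_{m-1}$ into the fibre and expanding via the stable splitting $\Sigma(A\times B)\simeq\Sigma A\vee\Sigma B\vee\Sigma(A\wedge B)$ and the James splitting $\Sigma\Omega\Sigma Z\simeq\bigvee_{k\ge1}\Sigma Z^{\wedge k}$ produces a wedge with far more summands than $R_m$ --- including smashes with repeated $\Omega X_i$-factors --- because the fibration above is genuinely twisted rather than split, and its twisting must absorb the excess and reorganise $F$ into $R_m$. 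Making this precise is the delicate combinatorial core of the argument and is exactly what Porter carries out in \cite{P}; I would either reproduce that bookkeeping by a careful double induction or, the result being classical, simply invoke it. As a consistency check one can compare rational homology: for $X_i=S^{2n+1}$ the homology of $F$ is governed by the kernel of the map from the free graded Lie algebra on $m$ generators to the abelian one, and Witt's formula reproduces the multiplicities $k-1$.
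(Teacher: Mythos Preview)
The paper gives no proof of this lemma at all: it is stated with a \verb|\qedno| and attributed directly to Porter \cite[Theorem~1]{P}. So your proposal already does strictly more than the paper, and the parts you actually carry out --- the section for $\Omega\iota$ via multiplying looped inclusions, and the $m=2$ case via Lemma~\ref{loophalfsmash} --- are correct and self-contained.

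Your inductive framework is a legitimate route toward Porter's result, and you are honest about where the real difficulty lies. One comment worth making explicit: the phenomenon you describe, where the naive expansion of $\Sigma\bigl(\Omega(\bigvee_{i<m}X_i)\wedge\Omega X_m\bigr)$ produces infinitely many extra summands (including repeated $\Omega X_i$-factors) that must be ``absorbed'' by the twisting of the fibration $F\to F_{m-1}$, is genuine and is precisely why this inductive approach is awkward. Porter's own argument does not proceed this way; he works more directly with Ganea-type constructions and avoids ever generating the spurious terms. So while your setup is not wrong, it is not the most efficient path, and the step you defer to \cite{P} is essentially the entire content of the lemma. Since the paper itself defers the whole statement to \cite{P}, your proposal is at least as complete.
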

Lemma ~\ref{incwedgeintoprod} can be applied to show that if there are spaces $X_i$ such that $\Omega X_i \in \mathcal{P}$, then the loop space of the wedge of the $X_i$'s is in $\mathcal{P}$.

\begin{corollary}
\label{loopofwedgeofP}
Let $X_1 \cdots, X_n$ be spaces such that $\Omega X_i \in \mathcal{P}$. Then $\Omega \left(\bigvee_{i=1}^n X_i\right) \in \mathcal{P}$.
\end{corollary}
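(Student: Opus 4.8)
I would prove this by applying Porter's decomposition (Lemma~\ref{incwedgeintoprod}) and treating the two factors that appear after looping separately. First note that the hypothesis $\Omega X_i \in \mathcal{P}$ forces $\Omega X_i$ to be connected, hence $\pi_1(X_i) = 0$, so each $X_i$ is a path-connected (in fact simply connected) $CW$-complex and Lemma~\ref{incwedgeintoprod} applies. Looping the resulting fibration and using that it splits yields a homotopy equivalence
\[\Omega\Big(\bigvee_{i=1}^n X_i\Big) \simeq \Omega\Big(\prod_{i=1}^n X_i\Big) \times \Omega W, \qquad W = \bigvee_{k=2}^n \bigvee_{1 \leq i_1 < \cdots < i_k \leq n} (\Sigma \Omega X_{i_1} \wedge \cdots \wedge \Omega X_{i_k})^{\vee(k-1)}.\]

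The first factor is easy: $\Omega(\prod_{i=1}^n X_i) \simeq \prod_{i=1}^n \Omega X_i$, and this lies in $\mathcal{P}$ because each $\Omega X_i \in \mathcal{P}$ and $\mathcal{P}$ is closed under finite products (a finite product of finite type products of spheres and loops on simply connected spheres is again such a product, and is an $H$-space).

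The substance of the argument is showing $W \in \mathcal{W}$; then the Hilton--Milnor theorem gives $\Omega W \in \mathcal{P}$ and we are done, since $\Omega(\bigvee_{i=1}^n X_i)$ is a finite product of spaces in $\mathcal{P}$. To see $W \in \mathcal{W}$, fix indices $1 \leq i_1 < \cdots < i_k \leq n$. Since $\Omega X_{i_1} \in \mathcal{P}$, suspension carries it into $\mathcal{W}$, so $\Sigma \Omega X_{i_1} \in \mathcal{W}$. Applying Lemma~\ref{joinofWandPinW} with $X = \Omega X_{i_1}$ and $A_1 = \Omega X_{i_2}, \ldots, A_{k-1} = \Omega X_{i_k}$ (all in $\mathcal{P}$) gives
\[\Sigma \Omega X_{i_1} \wedge \cdots \wedge \Omega X_{i_k} \cong \Sigma(\Omega X_{i_1} \wedge \cdots \wedge \Omega X_{i_k}) \in \mathcal{W}.\]
Thus each wedge summand of $W$ lies in $\mathcal{W}$, and since a finite wedge of finite type wedges of simply connected spheres is again a finite type wedge of simply connected spheres, $W \in \mathcal{W}$.

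I do not expect a serious obstacle: the proof is a bookkeeping assembly of Lemma~\ref{incwedgeintoprod}, Lemma~\ref{joinofWandPinW}, the facts that suspension sends $\mathcal{P}$ to $\mathcal{W}$ and looping sends $\mathcal{W}$ to $\mathcal{P}$, and the closure of $\mathcal{W}$ (resp. $\mathcal{P}$) under finite wedges (resp. finite products). The only place meriting a moment's care is verifying the hypothesis $\Sigma \Omega X_{i_1} \in \mathcal{W}$ of Lemma~\ref{joinofWandPinW}, which holds precisely because $\Omega X_{i_1} \in \mathcal{P}$.
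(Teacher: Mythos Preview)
Your proof is correct and follows essentially the same route as the paper: apply Porter's splitting (Lemma~\ref{incwedgeintoprod}), note that $\prod_i \Omega X_i \in \mathcal{P}$, use $\Sigma\Omega X_{i_1}\in\mathcal{W}$ together with Lemma~\ref{joinofWandPinW} to place each smash summand in $\mathcal{W}$, and finish with Hilton--Milnor. Your extra remark that $\Omega X_i\in\mathcal{P}$ forces $X_i$ to be simply connected (hence path-connected, so Lemma~\ref{incwedgeintoprod} applies) is a welcome check that the paper leaves implicit.
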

\begin{proof}
By Lemma ~\ref{incwedgeintoprod}, there is a homotopy equivalence \[\Omega \left(\bigvee\limits_{i=1}^m X_i\right) \simeq \prod\limits_{i=1}^m \Omega X_i \times \Omega \left(\bigvee\limits_{k=2}^m \bigvee\limits_{1 \leq i_1 < \cdots < i_k \leq m} (\Sigma \Omega X_{i_1} \wedge \cdots \wedge \Omega X_{i_k})^{\vee(k-1)}\right).\] The product $\prod_{i=1}^m \Omega X_i$ is in $\mathcal{P}$ since $\mathcal{P}$ is closed under products, so consider the complimentary factor. Since $\Omega X_{i_1}$, $\Sigma \Omega X_{i_1} \in \mathcal{W}$. Lemma ~\ref{joinofWandPinW} then implies \[\Sigma \Omega X_{i_1} \wedge \cdots \wedge \Omega X_{i_k} \in \mathcal{W},\] and so \[\bigvee\limits_{k=2}^m \bigvee\limits_{1 \leq i_1 < \cdots < i_k \leq m} (\Sigma \Omega X_{i_1} \wedge \cdots \wedge \Omega X_{i_k})^{\vee(k-1)} \in \mathcal{W}.\] Therefore, by the Hilton-Milnor Theorem \[\Omega \left(\bigvee\limits_{k=2}^m \bigvee\limits_{1 \leq i_1 < \cdots < i_k \leq m} (\Sigma \Omega X_{i_1} \wedge \cdots \wedge \Omega X_{i_k})^{\vee(k-1)}\right) \in \mathcal{P}.\]
\end{proof}

\section{Closure of $\mathcal{P}$ under retracts}
\label{sec:retract}

\subsection{Setup}
\label{setup}

In this section, homology will be assumed to have integer coefficients unless otherwise stated. Let $X \in \mathcal{P}$, and suppose there is a space $A$ which retracts off $X$, that is, there exist maps $f: A \rightarrow X$ and $g: X \rightarrow A$ such that the diagram \[\begin{tikzcd}
	A & X \\
	& A
	\arrow["f", from=1-1, to=1-2]
	\arrow["g", from=1-2, to=2-2]
	\arrow[Rightarrow, no head, from=1-1, to=2-2]
\end{tikzcd}\] homotopy commutes. In this section, we will show that $A$ is homotopy equivalent to a subproduct of $X$.

The product decomposition of $X$ implies there is a coalgebra isomorphism of $H_*(X)$ as a tensor product of exterior algebras corresponding to the spheres, and single-variable polynomial rings corresponding to the loops on spheres. Consider the composite $\phi:X \xrightarrow{g} A \xrightarrow{f} X$. Observe that $f \circ g \circ f \circ g \simeq f \circ g$ which implies that the induced map $\phi_*$ is an idempotent. To show that $A$ is homotopy equivalent to a subproduct of $X$, we will proceed in three stages. First, we consider the case where $H_*(A)$ contains a primitive generator in degree $m$ for $m \in \{1,2,3,6,7,14,4m\:|\: m \geq 1\}$, then we will consider the case where $H_*(A)$ contains a primitive generator in the Hurewicz image in degree $4m+2$, where $m \geq 2$, $m \neq 3$, and we will conclude by considering the case where $H_*(A)$ contains a primitive generator in degree $m$ where $m$ is odd and $m \notin \{1,3,7\}$. 

Each case requires an adaptation of the same core idea, and the notation defined in each subsection will be reused to reflect where the argument is the same. There is some notation that will be universal which we now define. Let $Y = S^n$ for $n \in \{1,3,7\}$ or $Y = \Omega S^{m}$ for $m \notin \{2,4,8\}$. Let $m_Y$ be the number of instances of $Y$ in the product decomposition of $X$. In particular, write $X$ as \[X \simeq \prod\limits_{i=1}^{m_Y} Y_i \times \prod\limits_{\alpha' \in \mathcal{I}'} Z_{\alpha'}\] where each $Y_i$ is an instance of $Y$ in the product decomposition of $X$, and each $Z_{\alpha'}$ are the spheres and loops on spheres that are not equal to $Y$. Denote by $H$ the lowest non-vanishing homology group of $Y$, and $H_i$ the lowest non-vanishing homology group of $Y_i$. Note that $H \cong H_i \cong \mathbb{Z}$ for all $i$. Let $\gamma_i$ be the primitive generator of $H_*(X)$ which is the image of a generator $\gamma_i' \in H_i$ under the map induced by the inclusion $Y_i \hookrightarrow X$. We will define maps $\rho_v:Y \rightarrow X$ and $\rho'_v:X \rightarrow Y$ such that the composite $Y \xrightarrow{\rho_v} X \xrightarrow{\phi} X \xrightarrow{\rho'_v} Y$ is a homotopy equivalence. Since $\phi$ factors through $A$ and $A$ is a $H$-space, this will imply that we obtain a homotopy equivalence $A \simeq Y \times A'$ for some space $A'$. An iterative approach will then be used to conclude that $A \in \mathcal{P}$.

\subsection{Case 1}
\label{spheresandonemod4}

In this subsection, we implicitly fix $Y$ to be either $Y = S^n$ for $n \in \{1,3,7\}$, $Y = \Omega S^{4m+1}$ for $m \geq 2$, or $Y = \Omega S^{4m+3}$ for $m \in \{0,1,3\}$. We show that if the homology of $A$ contains a primitive generator in the same degree as $H$, then $Y$ retracts off $A$.  

Observe that in this case, the set $\{\gamma_1,\cdots,\gamma_{m_Y}\}$ forms a basis of primitives in $H_n(X)$ if $Y = S^n$, $H_{4m}(X)$ if $Y = \Omega S^{4m+1}$, or $H_{4m+2}(X)$ if $Y = \Omega S^{4m+3}$. Since $\phi_*$ is a graded coalgebra map, it maps primitive elements to primitive elements of the same degree, and so $\phi_*(\gamma_i) = \sum_{j=1}^{m_Y} z_{i,j} \gamma_j$, where $z_{i,j} \in \mathbb{Z}$ for all $j$. Let $B_Y \in M_{m_Y}(\mathbb{Z})$ be the matrix with entries $z_{i,j}$. Since $\phi_*$ is an idempotent map, $B_Y$ is an idempotent matrix. 

Suppose $B_Y$ is not the zero matrix. Since $B_Y$ is idempotent, Lemma ~\ref{Zndecompidemp} implies that there exists an element $v = (y_1,\cdots,y_{m_Y})^T \in C(B_Y)$ which extends to a basis of $\mathbb{Z}^{m_Y}$. Therefore, by Lemma ~\ref{vectextendbasis}, the greatest common divisor of the non-zero components $y_1,\cdots,y_{m_Y}$ is $1$. By B\'ezout's Lemma, for $1 \leq i \leq m_Y$, there exists $c_i \in \mathbb{Z}$ such that $\sum_{i=1}^{m_Y} c_iy_i = 1$. Since $v \in C(B_Y)$, Lemma ~\ref{idemponcolumn} implies $B_Yv = v$. Let the vector $v$ correspond to the element $\sum_{i=1}^{m_Y} y_i \gamma_i$ in $H_*(X)$.

Let $d_k: S^n \rightarrow S^n$ be the degree $k$ map, and let $p_k:\Omega S^n \rightarrow \Omega S^n$ be the $k^{th}$ power map. Note that $d_k$ and $p_k$ both induce multiplication by $k$ in $H$ (in this case of $p_k$, this follows from the Hurewicz theorem).  Let $\psi_{k}$ be $d_{k}$ if $Y$ is a sphere or $p_{k}$ if $Y$ is the loops on a sphere. Define a map $\rho_v:Y\rightarrow X$ as the composite \[\rho_v:Y \xrightarrow{\Delta} \prod\limits_{i=1}^{m_Y} Y_i \xrightarrow{\prod\limits_{i=1}^{m_Y} \psi_{y_i}} \prod\limits_{i=1}^{m_Y} Y_i \hookrightarrow X\] where $\Delta$ is the diagonal map, and the right map is the inclusion. Now define a map $\rho'_v: X \rightarrow Y$ as the composite \[\rho'_v:X \xrightarrow{\pi} \prod\limits_{i=1}^{m_Y} Y_i \xrightarrow{\prod\limits_{i=1}^{m_Y}\psi_{c_i}} \prod\limits_{i=1}^{m_Y} Y_i \xrightarrow{\mu} Y\] where $\pi$ is the projection, $\mu$ is some choice of $m_Y$-fold $H$-space multiplication on $Y$, and the $c_i$'s have the property that $\sum_{i=1}^{m_Y} c_i y_i = 1$.

\begin{lemma}
\label{isoonbottomdegree}
 Suppose that $B_Y$ is not the zero matrix. Then, the composite \[e:Y \xrightarrow{\rho_v} X \xrightarrow{\phi} X \xrightarrow{\rho'_v} Y\] induces an isomorphism on $H$.
\end{lemma}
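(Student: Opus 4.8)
The goal is to trace the generator $\gamma' \in H$ through the composite $e = \rho'_v \circ \phi \circ \rho_v$ and check that it comes back to a generator of $H$, i.e. that $e_*$ is multiplication by $\pm 1$ on $H \cong \mathbb{Z}$. Since all three maps in the composite are built out of maps whose effect on the bottom-degree homology is understood, this is essentially a bookkeeping computation in $H_*(X)$, but one must be careful that the generators $\gamma_i$ really do behave additively under the relevant maps.

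First I would compute $(\rho_v)_*$ on $H$. The map $\rho_v$ is the composite of the diagonal $\Delta$, the product $\prod \psi_{y_i}$, and the inclusion $\prod Y_i \hookrightarrow X$. On bottom-degree homology, $\Delta_*$ sends a generator $\gamma'$ of $H$ to $(\gamma', \dots, \gamma')$ in $\bigoplus_i H_i$ (this uses that $H$ is the lowest non-vanishing group, so the Künneth formula has no lower-degree cross terms and $\Delta_*$ is the diagonal in that degree). Then $(\prod \psi_{y_i})_*$ multiplies the $i$-th coordinate by $y_i$ since $\psi_{y_i}$ — whether a degree map or a power map — induces multiplication by $y_i$ on $H_i$. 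Finally the inclusion sends the $i$-th coordinate generator to $\gamma_i \in H_*(X)$. Hence $(\rho_v)_*(\gamma') = \sum_{i=1}^{m_Y} y_i \gamma_i$, which is exactly the primitive element $v$.

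Next, $\phi_*(v) = v$ by construction: $v \in C(B_Y)$ and Lemma~\ref{idemponcolumn} gives $B_Y v = v$, which translates to $\phi_*\!\left(\sum y_i \gamma_i\right) = \sum y_i \gamma_i$. Finally I would compute $(\rho'_v)_*$ on $v$. The map $\rho'_v$ is the composite of the projection $\pi$, the product $\prod \psi_{c_i}$, and the $H$-space multiplication $\mu$. On $H$, $\pi_*$ sends $\gamma_i$ to the generator of $H_i$ in the $i$-th coordinate and kills the $Z_{\alpha'}$-factors (again because $H$ is the bottom degree, so the projection picks out exactly the $Y_i$-summands); then $(\prod \psi_{c_i})_*$ multiplies coordinate $i$ by $c_i$; then $\mu_*$, in the bottom degree, adds the coordinates together (the multiplication on an $H$-space induces addition on $\widetilde{H}_*$ in the lowest degree via the Künneth formula, since the mixed terms live in higher degree). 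So $(\rho'_v)_*(v) = \sum_{i=1}^{m_Y} c_i y_i \,\gamma' = \gamma'$ by Bézout. Composing, $e_*(\gamma') = \gamma'$, so $e$ induces an isomorphism on $H$.

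**Main obstacle.** The only subtle point is justifying that $\mu_*$ acts as coordinatewise addition on the bottom-degree homology, independently of which $m_Y$-fold $H$-multiplication $\mu$ was chosen, and likewise that $\Delta_*$ is the diagonal there; both follow from the fact that $H$ is the least non-vanishing homology group of $Y$, so that in degree $n$ (resp. $4m$, $4m+2$) the reduced Künneth formula for $\prod_i Y_i$ is simply $\bigoplus_i H_i$ with no decomposable contributions, forcing $\mu_*$ and $\Delta_*$ to be determined by the unit axiom. I would state this observation explicitly before running the computation. Everything else is a direct chase through the definitions of $\rho_v$ and $\rho'_v$ together with Lemma~\ref{idemponcolumn} and Bézout's identity.
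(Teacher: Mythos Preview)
Your proposal is correct and follows essentially the same route as the paper's proof: compute that $(\rho_v)_*$ sends the generator $\gamma$ to $v$, that $\phi_*$ fixes $v$ by Lemma~\ref{idemponcolumn}, and that $(\rho'_v)_*$ sends $v$ back to $\gamma$ via B\'ezout. The paper compresses each of these steps into a single ``by definition'' sentence, whereas you have unpacked the diagonal, the power/degree maps, the projection, and the $H$-space multiplication explicitly and noted why the K\"unneth cross terms vanish in the bottom degree; this extra care is welcome but not a different argument.
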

\begin{proof}
By definition, $(\rho_v)_*$ sends the generator $\gamma \in H$ to the element $v$ in $H_*(X)$. Since $v$ is in the column space of $B_Y$, Lemma ~\ref{idemponcolumn} implies that $v$ is fixed by $\phi_*$. By definition of $\rho'_v$, $(\rho'_v)_*$ sends $v$ to the generator $\gamma \in H$. Therefore, $e_*$ is an isomorphism on $H$.
\end{proof}

Lemma ~\ref{isoonbottomdegree} allows us to conclude that $e$ is a homotopy equivalence.

\begin{lemma}
\label{spheresretractoffA}
    Let $Y$ be a sphere $S^m$ for $m \in \{1,3,7\}$, loops on a sphere of the form $\Omega S^{4m+1}$ for $m \geq 1$, or $\Omega S^{4m+3}$ for $m \in \{0,1,3\}$. Suppose that $H_*(A)$ contains a primitive generator in degree $m$ if $Y$ is a sphere, in degree $4m$ if $Y = \Omega S^{4m+1}$, or in degree $4m+2$ if $\Omega S^{4m+3}$. Then $Y$ retracts off $A$.
\end{lemma}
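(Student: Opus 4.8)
The plan is to reduce the claim to the case already handled by Lemma~\ref{isoonbottomdegree} by showing that the idempotent matrix $B_Y$ cannot be the zero matrix once $H_*(A)$ carries a primitive generator in the relevant degree. First I would record the structure of $H_*(\phi)$: since $X \in \mathcal{P}$, the decomposition $X \simeq \prod_{i=1}^{m_Y} Y_i \times \prod_{\alpha'} Z_{\alpha'}$ gives $H_*(X)$ as a tensor product of exterior and polynomial algebras, and the module of primitives in the degree of $H$ (namely degree $m$ if $Y = S^m$, degree $4m$ if $Y = \Omega S^{4m+1}$, degree $4m+2$ if $Y = \Omega S^{4m+3}$) is exactly $\mathbb{Z}\langle \gamma_1,\dots,\gamma_{m_Y}\rangle$ — the point being that in these degrees the other factors $Z_{\alpha'}$ contribute no primitives (spheres contribute primitives only in their own dimension, and $\Omega S^{2k+1}$ contributes a primitive only in degree $2k$; the arithmetic $\{1,3,7\}$ versus $4m$, $4m+2$ forces the relevant primitive degree to be attained only by the $Y_i$'s). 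This is where the restriction on which $Y$ is allowed enters, and I expect this bookkeeping to be the main obstacle.

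Next I would use the retraction hypothesis. Let $a \in H_*(A)$ be the given primitive generator in the degree of $H$, and push it forward: $f_*(a) \in H_*(X)$ is primitive of that degree, hence $f_*(a) = \sum_i n_i \gamma_i$ for some integers $n_i$, not all zero (since $f$ is split injective on homology, $f_*(a) \neq 0$). Now $\phi_* = f_* \circ g_*$ fixes $f_*(a)$ up to the action of the idempotent: more precisely, because $g \circ f \simeq \mathrm{id}_A$, we have $\phi_* f_*(a) = f_* g_* f_*(a) = f_* (g_*f_*)(a) = f_*(a)$. So $B_Y$ has a nonzero fixed vector, namely $(n_1,\dots,n_{m_Y})^T$, and therefore $B_Y \neq 0$.

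With $B_Y \neq 0$ established, Lemma~\ref{isoonbottomdegree} applies verbatim: the composite $e : Y \xrightarrow{\rho_v} X \xrightarrow{\phi} X \xrightarrow{\rho'_v} Y$ induces an isomorphism on $H$. Since $Y$ is either a sphere $S^m$ with $m \in \{1,3,7\}$ or a loop space $\Omega S^{4m+1}$ ($m \geq 1$) or $\Omega S^{4m+3}$ ($m \in \{0,1,3\}$) — and in every one of these cases $Y$ is atomic by Theorem~\ref{atomicityoflocalisedloopsphere} (for $S^n$, $n \in \{1,3,7\}$, $H$-spaces; for $\Omega S^{\text{odd}}$, atomic integrally as it is $p$-locally atomic at every $p$ by Theorem~\ref{atomicityoflocalisedloopsphere} with $m$ odd) — a self-map inducing an isomorphism on the bottom nonvanishing homology group is a homotopy equivalence. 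Hence $e$ is a homotopy equivalence. Finally, $\phi$ factors as $X \xrightarrow{g} A \xrightarrow{f} X$, so $e$ factors as $Y \xrightarrow{\rho_v \text{ then } g} A \xrightarrow{f \text{ then } \rho'_v} Y$; setting $s = g \circ \rho_v : Y \to A$ and $r = \rho'_v \circ f : A \to Y$ gives $r \circ s = e \simeq \mathrm{id}_Y$, which exhibits $Y$ as a retract of $A$. (Strictly, $e$ need only be split by a homotopy inverse; one composes $r$ with $e^{-1}$ to get an honest retraction, using that $A$ is an $H$-space if one wants the complementary factor, though that is only needed in the iteration to come, not for the present statement.)
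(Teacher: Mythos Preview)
Your argument is correct and follows the same route as the paper's proof: establish $B_Y \neq 0$ from the existence of a primitive generator in $H_*(A)$ (you spell this out via $\phi_* f_*(a) = f_*(a)$, whereas the paper simply asserts it ``by definition of $\phi$''), invoke Lemma~\ref{isoonbottomdegree} to get an isomorphism on the bottom cell, then use atomicity of $Y$ to upgrade this to a homotopy equivalence and factor through $A$. One small point to tighten: in the loop-sphere case the paper argues that $e$ is a $p$-local equivalence at every prime \emph{and rationally} before deducing an integral equivalence, whereas your phrase ``atomic integrally as it is $p$-locally atomic at every $p$'' elides the rational step that the fracture argument needs.
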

\begin{proof}
    Since $H_n(A)$ contains a primitive generator, by definition of $\phi$, the matrix $B_Y$ is non-zero. If $Y$ is a sphere, then $H$ is the only non-vanishing homology group of $Y$. As $e_*$ is an isomorphism on $H$ by Lemma ~\ref{isoonbottomdegree} and $Y$ is an $H$-space, $e$ is a homotopy equivalence. The map $\phi$ factors through $A$, and so $Y$ retracts off $A$.

    If $Y = \Omega S^{4m+1}$ or $\Omega S^{4m+3}$, then $e$ induces an isomorphism on $H_k(Y)$, where $k = 4m$ if $Y = \Omega S^{4m+1}$ and $k = 4m+2$ if $Y = \Omega S^{4m+3}$. This implies that $e$ induces an isomorphism on $H_{k}(Y;\mathbb{Z}/p\mathbb{Z})$ for all primes $p$ and rationally. Therefore when localised at $p$ or rationally, Theorem ~\ref{atomicityoflocalisedloopsphere} implies $e$ is a homotopy equivalence. Since $e$ is a homotopy equivalence localised at every prime and rationally, $e$ is an integral homotopy equivalence. Hence, $Y$ retracts off $A$. 
\end{proof}

From the previous lemma, we obtain the following result.

\begin{proposition}
\label{spherescase}
    Let $X \in \mathcal{P}$ and $A$ be a space which retracts off $X$. Suppose that $H_*(A)$ contains a primitive generator in degree $m$ where $m \in \{1,2,3,6,7,14,4m\:|\: m \geq 1\}$. Then there is a homotopy equivalence \[A \simeq Y \times A'\] where $Y = S^m$ if $m \in \{1,3,7\}$, or $Y= \Omega S^{m+1}$ otherwise. Moreover, $A'$ retracts off $X$, and $H_m(A')$ contains one fewer primitive generator than $H_m(A)$.
\end{proposition}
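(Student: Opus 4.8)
The plan is: (i) read off from the value of $m$ which space $Y$ is relevant and invoke Lemma~\ref{spheresretractoffA} to retract $Y$ off $A$; (ii) upgrade this retraction to a product splitting $A \simeq Y \times A'$ using the $H$-space structure on $A$; and (iii) deduce the two ``moreover'' clauses by transitivity of retracts and a Künneth computation.

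\textbf{Step (i): identifying $Y$.} The set $\{1,2,3,6,7,14,4m \mid m\geq 1\}$ breaks up exactly along the cases of Lemma~\ref{spheresretractoffA}. If $m\in\{1,3,7\}$, take $Y=S^m$; Lemma~\ref{spheresretractoffA} applies in its sphere case. If $m\in\{2,6,14\}$, then $m+1\in\{3,7,15\}$, so $Y=\Omega S^{m+1}=\Omega S^{4k+3}$ with $k\in\{0,1,3\}$, and Lemma~\ref{spheresretractoffA} applies in its $\Omega S^{4m+3}$ case (bottom homology in degree $4m+2$). If $m=4k$ with $k\geq 1$, take $Y=\Omega S^{m+1}=\Omega S^{4k+1}$, and Lemma~\ref{spheresretractoffA} applies in its $\Omega S^{4m+1}$ case (bottom homology in degree $4m$). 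In every case the hypothesis that $H_*(A)$ contains a primitive generator in degree $m$ is precisely the hypothesis of Lemma~\ref{spheresretractoffA}, so $Y$ retracts off $A$: there are maps $s\colon Y\to A$ and $r\colon A\to Y$ with $r\circ s\simeq \mathrm{id}_Y$.

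\textbf{Step (ii): splitting.} Recall $A$ is an $H$-space, e.g.\ with multiplication $a\cdot b = g(f(a)\cdot f(b))$ inherited from $X$ (using $g\circ f\simeq\mathrm{id}_A$). Put $A'=\mathrm{hofib}(r)$ with fibre inclusion $j\colon A'\to A$, and define $\Theta\colon Y\times A'\to A$ by $\Theta(y,a')=s(y)\cdot j(a')$. I would prove $\Theta$ is a homotopy equivalence by showing it induces an isomorphism on all homotopy groups and then invoking Whitehead's theorem, all the spaces here having the homotopy type of $CW$-complexes. The section $s$ makes $r_*$ surjective and splits the long exact sequence of the fibration $A'\to A\xrightarrow{r}Y$ into short exact sequences $0\to\pi_n(A')\xrightarrow{j_*}\pi_n(A)\xrightarrow{r_*}\pi_n(Y)\to 0$; restricting $\Theta$ to the two factors of $Y\times A'$ gives, up to homotopy and using the $H$-unit, the maps $j$ on $A'$ and $s$ on $Y$, so $\Theta_*=j_*\oplus s_*$ realises this splitting and is an isomorphism. (When $m=1$ one uses that $\pi_1$ of an $H$-space is abelian.) I expect this step to be the main obstacle: a fibration admitting a section need not be trivial, and it is genuinely the $H$-structure on $A$ that both produces $\Theta$ and makes the homotopy-group computation go through.

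\textbf{Step (iii): the ``moreover'' clauses.} From $A\simeq Y\times A'$ there is a retraction $A\to A'$ with section $A'\to A$; composing these with $f\colon A\to X$ and $g\colon X\to A$ exhibits $A'$ as a retract of $X$. For the primitive count, observe that $H_*(A)$ is a direct summand of $H_*(X)$ (via $f_*$ and $g_*$, since $g_*f_*=\mathrm{id}$), hence torsion free; as $H_*(Y)$ is free, Künneth and the equivalence $A\simeq Y\times A'$ give a coalgebra isomorphism $H_*(A)\cong H_*(Y)\otimes H_*(A')$, whence $\mathrm{Prim}_m H_*(A)\cong \mathrm{Prim}_m H_*(Y)\oplus\mathrm{Prim}_m H_*(A')$. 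Since $m$ is the lowest positive degree in which $\widetilde{H}_*(Y)$ is non-zero and $H_m(Y)\cong\mathbb{Z}$ is spanned by a primitive class (whether $Y=S^m$ or $Y=\Omega S^{m+1}$), $\mathrm{Prim}_m H_*(Y)$ has rank $1$, and therefore $H_m(A')$ contains exactly one fewer primitive generator than $H_m(A)$.
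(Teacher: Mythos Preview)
Your proof is correct and follows essentially the same route as the paper: invoke Lemma~\ref{spheresretractoffA} to retract $Y$ off $A$, use the $H$-space structure on $A$ to split the fibration $A'\to A\xrightarrow{r}Y$, and then read off the ``moreover'' clauses. Your argument that $A'$ retracts off $X$ via transitivity (retract off $A$, which retracts off $X$) is in fact a bit cleaner than the paper's, which sets up a fibration diagram to obtain $X\simeq Y\times A'\times F$ explicitly; and you supply more detail for the $H$-space splitting and the primitive count, which the paper leaves as standard facts.
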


\begin{proof}
 Since there is a primitive generator of degree $m$, the matrix $B_Y$ is non-zero, and so Lemma ~\ref{spheresretractoffA} implies that $Y$ retracts off $A$. This implies there is a map $r:A \rightarrow Y$ which has a right homotopy inverse.  Let $F$ be the homotopy fibre of $g$ and consider the homotopy fibration diagram \[\begin{tikzcd}
	F & {X'} & {A'} \\
	F & X & A \\
	& Y & Y
	\arrow["g", from=2-2, to=2-3]
	\arrow["r", from=2-3, to=3-3]
	\arrow["{g \circ r}", from=2-2, to=3-2]
	\arrow[Rightarrow, no head, from=3-2, to=3-3]
	\arrow[from=2-1, to=2-2]
	\arrow[from=1-1, to=1-2]
	\arrow[from=1-2, to=1-3]
	\arrow[from=1-3, to=2-3]
	\arrow[from=1-2, to=2-2]
	\arrow[Rightarrow, no head, from=1-1, to=2-1]
\end{tikzcd}\] where $A'$ and $X'$ are the homotopy fibres of $r$ and $g \circ r$ respectively. Since $A$ retracts off $X$, it is an $H$-space. The right homotopy inverse for $r$ implies there is a homotopy equivalence $A \simeq Y \times A'$. Observe that $A'$ has the same homology as $A$ except with one less primitive generator in degree $m$. Moreover, since $g$ has a right homotopy inverse and $X$ is an $H$-space, there are homotopy equivalences $X \simeq A \times F \simeq Y \times A' \times F$. Hence, $A'$ retracts off $X$.
\end{proof}

\subsection{Case 2}
\label{threemod4}

In this subsection, fix $Y$ to be $\Omega S^{4n+3}$ for $n \geq 2$, $n \neq 3$.  We show that if the homology of $A$ contains a primitive generator in $H_{4n+2}(A)$ which is in the Hurewicz image, then $\Omega S^{4n+3}$ retracts off $A$. In this case, the set $\{\gamma_1,\cdots,\gamma_{m_Y}\}$ does not form a basis of primitives in $H_{4n+2}(X)$, since there may be $\Omega S^{2n+2}$ terms in the product decomposition for $X$. Let $\overline{Y} = \Omega S^{2n+2}$. Write $X$ as \[X \simeq \prod\limits_{i=1}^{m_Y} \Omega S^{4n+3}_i \times \prod\limits_{j=1}^{m_{\overline{Y}}} \Omega S^{2n+2}_j \times \prod\limits_{\alpha' \in \mathcal{I}'} Z_{\alpha'}\] where each $Z_{\alpha'}$ are the spheres and loops on spheres that are not equal to $\Omega S^{4n+3}$ or $\Omega S^{2n+2}$. Let $\overline{\gamma}_i$ be the primitive generator of $H_*(X)$ which is the image of a generator $\overline{\gamma}_i' \in H_{4n+2}(\Omega S^{2n+2})$ under the map induced by the inclusion $\Omega S^{2n+2}_i \hookrightarrow X$. The set $\{\gamma_1,\cdots,\gamma_{m_Y},\overline{\gamma}_1,\cdots,\overline{\gamma}_{m_{\overline{Y}}}\}$ forms a basis of primitives in $H_{4n+2}(X)$. 

Consider a primitive generator $a \in H_{4n+2}(A)$ such that $a$ is in the Hurewicz image. Observe that $f_*(a)$ is primitive, in the Hurewicz image, and since $f_*$ is injective, $f_*(a)$ is non-zero. By Lemma ~\ref{Hurewiczevensphere}, $f_*(a) = \sum_{i=1}^{m_Y} y_i \gamma_i + \sum_{j=1}^{m_{\overline{Y}}} 2\overline{y}_j \overline{\gamma}_j$. Let $v = (y_1,\cdots,y_{m_Y},2\overline{y}_1,\cdots,2\overline{y}_{m_{\overline{Y}}})$ correspond to the element $f_*(a)$. By definition of $\phi$, im$(\phi_*) = $ im$(f_*)$, and $f_*(a)$ is a generator of im$(\phi_*)$. Therefore, by Lemma ~\ref{vectextendbasis}, the greatest common divisor of the components of $v$ is $1$. By B\'ezout's Lemma, for $1 \leq i \leq m_Y$ and $1 \leq j \leq m_{\overline{Y}}$, there exists $c_i,\overline{c}_j \in \mathbb{Z}$ such that $\sum_{i=1}^{m_Y} c_iy_i + \sum_{j=1}^{m_{\overline{Y}}} 2\overline{c}_j\overline{y}_j = 1$. Since $v \in \text{im}(\phi)$, Lemma ~\ref{idemponcolumn} implies $\phi_*(f_*(a)) = f_*(a)$. 

Let $\lambda_{k}$ be the composite \[\lambda_k:\Omega S^{4n+3} \xrightarrow{\Omega [id,id]} \Omega S^{2n+2} \xrightarrow{p_{k}} \Omega S^{2n+2}.\] By Lemma ~\ref{loopedhopfmapmulti}, $\lambda_k$ maps $\gamma$ to $2k\overline{\gamma}$. Define a map $\rho_v:\Omega S^{4n+3}\rightarrow X$ as the composite \[\rho_v:\Omega S^{4n+3} \xrightarrow{\Delta} \prod\limits_{i=1}^{m_Y} \Omega S^{4n+3}_i \times \prod\limits_{j=1}^{m_{\overline{Y}}} \Omega S^{4n+3}_j \xrightarrow{\prod\limits_{i=1}^{m_Y}p_{y_i} \times \prod\limits_{j=1}^{m_{\overline{Y}}}\lambda_{\overline{y}_{j}}} \prod\limits_{i=1}^{m_Y} \Omega S^{4n+3}_i \times \prod\limits_{j=1}^{m_{\overline{Y}}} \Omega S^{2n+2}_j \hookrightarrow X\] where $\Delta$ is the diagonal map, and the right map is the inclusion. Now define a map $\rho'_v: X \rightarrow \Omega S^{4n+3}$ as the composite \[\rho'_v:X \xrightarrow{\pi} \prod\limits_{i=1}^{m_Y} \Omega S^{4n+3}_i \times \prod\limits_{j=1}^{m_{\overline{Y}}} \Omega S^{2n+2}_j \xrightarrow{\prod\limits_{i=1}^{m_Y}id \times \prod\limits_{j=1}^{m_{\overline{Y}}}h_2} \prod\limits_{i=1}^{m_Y} \Omega S^{4n+3}_i \times \prod\limits_{j=1}^{m_{\overline{Y}}} \Omega S^{4n+3}_j \]\[\xrightarrow{\prod\limits_{i=1}^{m_Y}p_{c_i} \times \prod\limits_{j=1}^{m_{\overline{Y}}}p_{\overline{c}_j}} \prod\limits_{i=1}^{m_Y} \Omega S^{4n+3}_i \times \prod\limits_{j=1}^{m_{\overline{Y}}} \Omega S^{4n+3}_j \xrightarrow{\mu} \Omega S^{4n+3}\] where $\pi$ is the projection, $\mu$ is some choice of $m_Y$-fold $H$-space multiplication on $\Omega S^{4n+3}$, and the $c_i$'s and $\overline{c}_j$'s have the property that $\sum_{i=1}^{m_Y} c_i y_i + \sum_{j=1}^{m_{\overline{Y}}} 2\overline{c}_j\overline{y}_j= 1$. By definition, $(\rho_v)_*$ sends the generator $\gamma \in H$ to the element $v$ in $H_*(X)$, and by definition of $\rho'_v$, $(\rho'_v)_*$ sends $v$ to the generator $\gamma \in H$. Therefore, arguing as in Lemma ~\ref{isoonbottomdegree}, Lemma ~\ref{spheresretractoffA} and Proposition ~\ref{spherescase}, we obtain the following.

\begin{proposition}
\label{loopspheresmod3case}
    Let $X \in \mathcal{P}$ and $A$ be a space which retracts off $X$. Suppose $H_{4n+2}(A)$, $n \geq 2$, $n \neq 3$, contains a primitive generator in the Hurewicz image. Then there is a homotopy equivalence \[A \simeq \Omega S^{4n+3} \times A'\] where $A'$ retracts off $X$, and $H_{4n+2}(A')$ contains one fewer primitive generator in the Hurewicz image than $H_{4n+2}(A)$.
    \qedno
\end{proposition}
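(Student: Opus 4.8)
The plan is to run the same three-step scheme as in Case~1, now with the maps $\rho_v$ and $\rho'_v$ constructed above, and to feed in the Hurewicz-image hypothesis exactly where parity matters. Given a primitive generator $a \in H_{4n+2}(A)$ lying in the Hurewicz image, its image $f_*(a)$ is a nonzero primitive in the Hurewicz image of $H_{4n+2}(X)$, and since $\operatorname{im}f_* = \operatorname{im}\phi_*$ with $\phi_*$ idempotent, $f_*(a)$ generates $\operatorname{im}\phi_*$ and hence extends to a basis; Lemma~\ref{Hurewiczevensphere}, applied to each $\Omega S^{2n+2}_j$ factor, forces its $\overline{\gamma}_j$-coordinates to be even (this is the source of the hypothesis $n\neq 3$, which is needed for Lemma~\ref{Hurewiczevensphere} to apply to $\Omega S^{2n+2}$). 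By Lemma~\ref{vectextendbasis} the coordinates of the associated vector $v$ then have greatest common divisor $1$, so B\'ezout's Lemma produces the integers $c_i,\overline{c}_j$ with $\sum c_iy_i + \sum 2\overline{c}_j\overline{y}_j = 1$ used to build $\rho'_v$.

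The first step, the analogue of Lemma~\ref{isoonbottomdegree}, is to check that \[e : \Omega S^{4n+3} \xrightarrow{\rho_v} X \xrightarrow{\phi} X \xrightarrow{\rho'_v} \Omega S^{4n+3}\] induces an isomorphism on the least nonvanishing homology group $H = H_{4n+2}(\Omega S^{4n+3}) \cong \mathbb{Z}$. I would chase the generator $\gamma$: by construction $(\rho_v)_*(\gamma) = v$, where Lemma~\ref{loopedhopfmapmulti} is precisely what makes the maps $\lambda_{\overline{y}_j}$ contribute the coordinates $2\overline{y}_j$; then $\phi_*(v) = v$ by idempotency of $\phi_*$ and Lemma~\ref{idemponcolumn}, since $v \in \operatorname{im}\phi_*$; and $(\rho'_v)_*(v) = \gamma$ because each James--Hopf map $h_2$ carries $\overline{\gamma}_j$ to a generator of $H_{4n+2}(\Omega S^{4n+3}_j)$ by Lemma~\ref{JamesHopfidentity}, the power maps rescale by $c_i$ and $\overline{c}_j$, and the $H$-space multiplication $\mu$ adds coordinates, leaving $\bigl(\sum c_iy_i + \sum 2\overline{c}_j\overline{y}_j\bigr)\gamma = \gamma$.

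The remaining two steps copy Lemma~\ref{spheresretractoffA} and Proposition~\ref{spherescase}. Since $4n+2$ is the bottom degree of $\widetilde{H}_*(\Omega S^{4n+3})$, $e$ is an isomorphism on $H_{4n+2}(-;\mathbb{Z}/p\mathbb{Z})$ for every prime $p$ and on $H_{4n+2}(-;\mathbb{Q})$; as $n \geq 2$ gives $4n+2 \geq 10$, which is even and outside $\{1,3,7\}$, Theorem~\ref{atomicityoflocalisedloopsphere} applies at every prime (and $\Omega S^{4n+3}$ is atomic rationally as well), so $e$ is a homotopy equivalence after localisation everywhere, hence integrally. Because $\phi = f\circ g$ factors through $A$, $e$ factors as $\Omega S^{4n+3} \xrightarrow{g\circ\rho_v} A \xrightarrow{\rho'_v\circ f} \Omega S^{4n+3}$, so $\Omega S^{4n+3}$ retracts off $A$ via some $r : A \to \Omega S^{4n+3}$ with a right homotopy inverse. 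Since $A$ is an $H$-space (a retract of $X \in \mathcal{P}$), the homotopy fibre $A'$ of $r$ satisfies $A \simeq \Omega S^{4n+3} \times A'$, and the homotopy-fibration diagram of Proposition~\ref{spherescase} with $F$ the homotopy fibre of $g$ gives $X \simeq A \times F \simeq \Omega S^{4n+3} \times A' \times F$, so $A'$ retracts off $X$; the Künneth theorem together with the splitting of primitives and of Hurewicz images across the product $\Omega S^{4n+3}\times A'$ shows $H_{4n+2}(A')$ has exactly one fewer primitive generator in the Hurewicz image than $H_{4n+2}(A)$.

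I expect the only genuine subtlety to be the homology bookkeeping in the first step: one must verify that the evenness supplied by Lemma~\ref{Hurewiczevensphere} dovetails with the factor of $2$ in the looped Whitehead product (Lemma~\ref{loopedhopfmapmulti}) and with the James--Hopf identity (Lemma~\ref{JamesHopfidentity}) so that $\rho'_v$ really does send $v$ back to a generator — this is exactly where Case~2 departs from Case~1 and where the hypotheses $n\geq 2$, $n\neq 3$ get used. A minor point worth stating explicitly is that, unlike in the sphere case, atomicity of $\Omega S^{4n+3}$ must be invoked integrally rather than at a single prime, which is legitimate precisely because $4n+2\geq 10$.
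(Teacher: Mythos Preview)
Your proposal is correct and follows essentially the same approach as the paper: the text preceding Proposition~\ref{loopspheresmod3case} constructs exactly the maps $\rho_v$ and $\rho'_v$ you describe (using $\lambda_{\overline{y}_j}$ built from $\Omega[id,id]$ on one side and $h_2$ on the other), invokes Lemma~\ref{Hurewiczevensphere} to force the even $\overline{\gamma}_j$-coefficients, and then defers to the arguments of Lemma~\ref{isoonbottomdegree}, Lemma~\ref{spheresretractoffA}, and Proposition~\ref{spherescase} verbatim. Your identification of where the hypotheses $n\geq 2$, $n\neq 3$ enter (namely, so that $n+1\notin\{1,2,4\}$ in Lemma~\ref{Hurewiczevensphere}) is exactly right.
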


\subsection{Case 3}
\label{evendimensionalspheres}

In this subsection, fix $Y$ to be $\Omega S^{2n}$ for $n \notin \{1,2,4\}$. We show that if the homology of $A$ contains a primitive generator in $H_{2n-1}(A)$, then $\Omega S^{2n}$ retracts off $A$. Observe that in this case, the set $\{\gamma_1,\cdots,\gamma_{m_Y}\}$ forms a basis of primitives in $H_{2n-1}(X)$. Therefore, if $H_{2n-1}(A)$ contains a primitive generator, arguing as in Subsection ~\ref{spheresandonemod4}, we obtain \begin{enumerate}
    \item a generator of im$(\phi_*)$, $\sum_{i=1}^{m_Y} y_i \gamma_i \in H_{2n-1}(X)$,
    \item a vector $v = (y_1,\cdots,y_{m_Y})$ corresponding to the generator in (1) such that the greatest common divisor of the non-zero components is $1$;
    \item a composite \[\rho_v:\Omega S^{2n} \xrightarrow{\Delta} \prod\limits_{i=1}^{m_Y} \Omega S^{2n}_i \xrightarrow{p_{y_i}} \prod\limits_{i=1}^{m_Y} \Omega S^{2n}_i \hookrightarrow X\] such that a generator $\gamma \in H_{2n-1}(\Omega S^{2n})$ maps to $v$;
    \item a composite \[\rho'_v:X \xrightarrow{\pi} \prod\limits_{i=1}^{m_Y} \Omega S^{2n}_i \xrightarrow{\prod\limits_{i=1}^{m_Y}p_{c_i}} \prod\limits_{i=1}^{m_Y} \Omega S^{2n}_i \xrightarrow{\mu} \Omega S^{2n}\] where $\pi$ is the projection, $\mu$ is some choice of $m_Y$-fold $H$-space multiplication on $\Omega S^{2n}$, the $c_i$'s have the property that $\sum_{i=1}^{m_Y} c_i y_i = 1$, and the map $(\rho'_v)_*$ maps $v$ to $\gamma$;
    \item the composite $e:(\rho'_v)_* \circ \phi_* \circ (\rho_v)_*$ is an isomorphism on $H_{2n-1}(\Omega S^{2n})$.
\end{enumerate} 

In Subsection ~\ref{spheresandonemod4}, this was enough to conclude that the corresponding loop on sphere retracted off of $A$. In this case, $e$ may not be a homotopy equivalence since $\Omega S^{2n}$ is not atomic at odd primes. However, we can adjust the maps $\rho_{v}$ and $\rho'_{v}$ to define a map $\overline{e}'$ which is a homotopy equivalence. In particular, these maps will agree with $\rho_v$ and $\rho'_v$ respectively in degree $2n-1$, but may differ in degree $4n-2$ so that $\overline{e}'$ is an isomorphism on both $H_{2n-1}(\Omega S^{2n})$ and $H_{4n-2}(\Omega S^{2n})$.

Since the generators of $H_*(\Omega S^{2n})$ in degree $2n-1$ and $4n-2$ are divisors of elements in the Hurewicz image, the map induced by the $k^{th}$ power map sends a generator $\gamma \in H_{2n-1}(\Omega S^{2n})$ to $k \gamma$, and a generator $\delta \in H_{4n-2}(\Omega S^{2n})$ to $k\delta$. Recall that $v = (y_1,\cdots,y_{m_Y})^T$, and $(\rho_v)_*$ maps a generator $\gamma \in H_{2n-1}(\Omega S^{2n})$ to $v$ in $H_*(X)$. Let $\delta_i$ be the primitive generator of $H_{4n-2}(X)$ which is the image of a generator $\delta_i' \in H_{4n-2}(\Omega S^{2n}_i)$ under the map induced by the inclusion $\Omega S^{2n}_i \hookrightarrow X$. By definition of $\rho_v$, for a suitable choice of generator $\delta \in H_{4n-2}(\Omega S^{2n})$, $(\rho_v)_*$ maps $\delta$ to the element $\sum_{i=1}^{m_Y} y_i \delta_i$. 

 Let $\overline{Y} = \Omega S^{4n-1}$. Write $X$ as \[X \simeq \prod\limits_{i=1}^{m_Y} \Omega S^{2n}_i \times \prod\limits_{j=1}^{m_{\overline{Y}}} \Omega S^{4n-1}_j \times \prod\limits_{\alpha' \in \mathcal{I}'} Z_{\alpha'}\] where each $Z_{\alpha'}$ are the spheres and loops on spheres that are not equal to $\Omega S^{2n}$ or $\Omega S^{4n-1}$. Let $\overline{\delta}_i$ be the primitive generator of $H_{4n-2}(X)$ which is the image of a generator $\overline{\delta}_i' \in H_{4n-2}(\Omega S^{4n-1}_i)$ under the map induced by the inclusion $\Omega S^{4n-1}_i \hookrightarrow X$. Observe that the set $\{\delta_1,\cdots,\delta_{m_Y},\overline{\delta}_1,\cdots,\overline{\delta}_{m_{\overline{Y}}}\}$ forms a basis of the primitives in $H_{4n-2}(X)$. Let $w = (y_1,\cdots,y_{m_{Y}},0,\cdots,0)^T$ be the vector defined by taking the coefficients of $\sum_{i=1}^{m_Y} y_i \delta_i$. Since $w$ is primitive, $\phi_*$ maps $w$ to an element of the form $\sum_{i=1}^{m_Y} y_i' \delta_i + \sum_{j=1}^{m_{\overline{Y}}}\overline{y}_i \overline{\delta}_j$. Let $w' = (y_1',\cdots,y'_{m_{Y}},\overline{y}_1,\cdots,\overline{y}_{m_{\overline{Y}}})^T$ be the vector containing the coefficients of $\sum_{i=1}^{m_Y} y_i' \delta_i + \sum_{j=1}^{m_{\overline{Y}}}\overline{y}_i \overline{\delta}_j$. The components of $w'$ can be related to the components of $v$. 

\begin{lemma}
\label{vectorssamemod2}
    The components $y_i$ are equal to $y_i'$ modulo $2$. Moreover, there exists $1 \leq j \leq m_Y$ such that $y_j$ is odd.
\end{lemma}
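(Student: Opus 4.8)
\emph{Proof strategy.} The second assertion is immediate: exactly as in Subsection~\ref{spheresandonemod4}, the vector $v=(y_1,\dots,y_{m_Y})^{T}$ is a basis element of the direct summand $C(B_Y)\subseteq\mathbb{Z}^{m_Y}$, hence extends to a basis of $\mathbb{Z}^{m_Y}$, so Lemma~\ref{vectextendbasis} forces one of $y_1,\dots,y_{m_Y}$ to be odd. The content of the lemma is therefore the congruence $y_i\equiv y_i'\pmod 2$. The difficulty is that $\phi_{*}$ is only a map of coalgebras, not of Pontryagin rings: although $\delta_i$ is the square of $\gamma_i$ inside $H_{*}(\Omega S^{2n}_i)$, this relation is invisible to $\phi_{*}$, so $\phi_{*}(w)$ cannot be read off from $\phi_{*}(v)=v$ algebraically. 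The plan is to detect the degree-$(4n-2)$ class geometrically, via the second James filtration, and to confine the non-multiplicativity to the Hurewicz image of $\Omega S^{2n}$ in degree $4n-2$, which is $2$ times a generator by Lemma~\ref{Hurewiczevensphere}.

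In detail, let $j\colon J_2(S^{2n-1})\hookrightarrow J(S^{2n-1})=\Omega S^{2n}$ be the inclusion of the second James filtration. Then $J_2(S^{2n-1})=S^{2n-1}\cup e^{4n-2}$ has cells only in dimensions $0$, $2n-1$ and $4n-2$; the map $j$ restricts to the suspension $E$ on the bottom cell, and $j_{*}$ is an isomorphism on $H_{4n-2}$ since the next cell of $\Omega S^{2n}$ sits in dimension $6n-3$. Fix the generator $u\in H_{4n-2}(J_2(S^{2n-1}))$ with $j_{*}(u)=\delta$, where $\delta$ is the generator with $(\rho_v)_{*}(\delta)=\sum_i y_i\delta_i$. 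Writing $\pi_i\colon X\to\Omega S^{2n}_i$ for the projection and $e_k\in\pi_{2n-1}(\Omega S^{2n}_k)\subseteq\pi_{2n-1}(X)$ for the generator (whose Hurewicz image is $\gamma_k$), I would compare the two maps $F_1=\pi_i\circ\rho_v\circ j$ and $F_2=\pi_i\circ\phi\circ\rho_v\circ j$ from $J_2(S^{2n-1})$ to $\Omega S^{2n}_i$.

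First I would show $F_1$ and $F_2$ have the same restriction to the bottom cell $S^{2n-1}$. Unwinding the definition of $\rho_v$ (the diagonal sends $E$ to the tuple all of whose entries are $E$, the power map $p_{y_k}$ acts as multiplication by $y_k$ on $\pi_{2n-1}$ of the homotopy-associative $H$-space $\Omega S^{2n}_k$, and the inclusion $\prod_k\Omega S^{2n}_k\hookrightarrow X$ carries the generator of $\pi_{2n-1}(\Omega S^{2n}_k)$ to $e_k$), one sees that $\rho_v\circ j|_{S^{2n-1}}=\rho_v\circ E$ represents $\xi_0:=\sum_k y_k e_k\in\pi_{2n-1}(X)$, so $F_1|_{S^{2n-1}}$ represents $(\pi_i)_{\#}(\xi_0)=y_i$ times the generator of $\pi_{2n-1}(\Omega S^{2n}_i)$. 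For $F_2$ I would use that $\phi_{*}$ fixes $v$: since $h_X(\phi_{\#}(\xi_0))=\phi_{*}(v)=v$ and the Hurewicz homomorphism restricts to an isomorphism from $\bigoplus_k\pi_{2n-1}(\Omega S^{2n}_k)$ onto the summand $\bigoplus_k\mathbb{Z}\gamma_k$ of $H_{2n-1}(X)$, the component of $\phi_{\#}(\xi_0)$ in $\bigoplus_k\pi_{2n-1}(\Omega S^{2n}_k)$ is again $\xi_0$, while the remaining component lies in the homotopy of the other factors and is annihilated by $(\pi_i)_{\#}$; hence $F_2|_{S^{2n-1}}$ also represents $y_i$ times the generator.

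Finally, two maps $J_2(S^{2n-1})=S^{2n-1}\cup e^{4n-2}\to\Omega S^{2n}_i$ that agree on $S^{2n-1}$ differ, up to homotopy, by an element of $\pi_{4n-2}(\Omega S^{2n}_i)$, so $(F_2)_{*}(u)-(F_1)_{*}(u)$ lies in the Hurewicz image of $\Omega S^{2n}_i$ in degree $4n-2$, which equals $2\,H_{4n-2}(\Omega S^{2n}_i)$ by Lemma~\ref{Hurewiczevensphere}. On the other hand $(F_1)_{*}(u)=(\pi_i)_{*}((\rho_v)_{*}(\delta))=(\pi_i)_{*}\big(\sum_k y_k\delta_k\big)=y_i\delta_i'$ and $(F_2)_{*}(u)=(\pi_i)_{*}(\phi_{*}((\rho_v)_{*}(\delta)))=(\pi_i)_{*}\big(\sum_k y_k'\delta_k+\sum_j\overline{y}_j\overline{\delta}_j\big)=y_i'\delta_i'$, where $\delta_i'$ generates $H_{4n-2}(\Omega S^{2n}_i)$ and we use $(\pi_i)_{*}(\delta_k)=\delta_{ik}\delta_i'$, $(\pi_i)_{*}(\overline{\delta}_j)=0$. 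Thus $(y_i'-y_i)\delta_i'\in 2\,H_{4n-2}(\Omega S^{2n}_i)$, i.e.\ $y_i'\equiv y_i\pmod 2$. The crux is the bottom-cell comparison for $F_2$, where the only available input on the non-multiplicative map $\phi$ is that it fixes $v$; once this pins $F_1$ and $F_2$ down to agree on $S^{2n-1}$, the parity is forced by the indeterminacy $\pi_{4n-2}(\Omega S^{2n}_i)$ having even Hurewicz image (equivalently, the Whitehead square $[\iota_{2n-1},\iota_{2n-1}]$ having even Hopf invariant for $n\notin\{1,2,4\}$).
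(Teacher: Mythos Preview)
Your argument is correct and takes a genuinely different route from the paper's. The paper does not go through the James filtration at all: instead it applies the $2$-local atomicity of $\Omega S^{2n}$ (Theorem~\ref{atomicityoflocalisedloopsphere}) directly to self-maps of $\Omega S^{2n}$ built from $\phi$. For an index $i$ with $y_i$ odd, the composite $\psi=\pi_i\circ\phi\circ\rho_v\colon\Omega S^{2n}\to\Omega S^{2n}$ sends $\gamma\mapsto y_i\gamma$ and $\delta\mapsto y_i'\delta$; since $y_i$ is odd, $\psi$ is a mod-$2$ isomorphism on $H_{2n-1}$, hence a $2$-local equivalence, forcing $y_i'$ odd. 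For an index $j$ with $y_j$ even, the paper \emph{uses} the already-established existence of an odd $y_i$ and considers $\psi'=\mu\circ\pi_{i,j}\circ\phi\circ\rho_v$, which sends $\gamma\mapsto (y_i+y_j)\gamma$ and $\delta\mapsto (y_i'+y_j')\delta$; the same atomicity argument gives $y_i'+y_j'$ odd, hence $y_j'$ even. Thus in the paper the ``moreover'' clause is not a side remark but an essential intermediate step, and the congruence is obtained by a two-case analysis.

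Your approach instead packages the $2$-local atomicity into Lemma~\ref{Hurewiczevensphere} and then runs a uniform obstruction-theoretic argument on $J_2(S^{2n-1})$: once $F_1$ and $F_2$ agree on the bottom cell, the top-cell indeterminacy lands in the Hurewicz image $2\,H_{4n-2}(\Omega S^{2n}_i)$, giving $y_i\equiv y_i'\pmod 2$ for every $i$ simultaneously with no case split and without using the existence of an odd $y_j$. This is more conceptual---it pinpoints the source of the congruence as the evenness of the Hurewicz image rather than an equivalence/non-equivalence dichotomy---and decouples the two assertions of the lemma, at the cost of introducing $J_2(S^{2n-1})$ and the coaction on its top cell. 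The paper's argument is more elementary (no cell-by-cell analysis), but its logic intertwines the two statements.
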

\begin{proof}
Since the vector $v$ extends to a basis of $\mathbb{Z}^{m_Y}$, by Lemma ~\ref{vectextendbasis}, at least one of $y_1,\cdots,y_m$ must be odd. Consider a component $y_i$ of $v$ which is odd, and consider the composite \[\psi:\Omega S^{2n} \xrightarrow{\rho_v} X \xrightarrow{\phi} X \xrightarrow{\pi_i} \Omega S^{2n}_i\] where $\pi_i$ is the projection map. By definition, $\psi_*$ sends the generator $\gamma \in H_{2n-1}(\Omega S^{2n})$ to $y_i\gamma$, and the generator $\delta \in H_{4n-2}(\Omega S^{2n})$ to $y_i'\delta$. Since $y_i$ is odd, $\psi_*$ is an isomorphism in degree $2n-1$, and so by Theorem ~\ref{atomicityoflocalisedloopsphere}, $\psi$ is a $2$-local homotopy equivalence. This implies that integrally, $y_i'$ must be odd and so $y_i' = y_i+2k_i$ for some $k_i \in \mathbb{Z}$, and this holds for all $i$ for which $y_i$ is odd. 

Now fix a component $y_i$ of $v$ which is odd, and consider a component $y_j$ which is even. Consider the composite \[\psi': \Omega S^{2n} \xrightarrow{\rho_v} X \xrightarrow{\phi} X \xrightarrow{\pi_{i,j}} \Omega S^{2n}_i \times \Omega S^{2n}_j \xrightarrow{\mu} \Omega S^{2n}\] where $\pi_{i,j}$ is the projection onto $\Omega S^{2n}_i \times \Omega S^{2n}_j$ and $\mu$ is the loop space multiplication. By definition, $(\psi')_*$ sends the generator $\gamma \in H_{2n-1}(\Omega S^{2n})$ to $(y_i + y_j)\gamma$ and the generator $\delta \in H_{4n-2}(\Omega S^{2n})$ to $(y_i' + y_j')\delta$. Since $y_i$ is odd and $y_j$ is even, $y_i+y_j$ is odd. Therefore, $(\psi')_*$ on $H_{2n-1}(\Omega S^{2n};\mathbb{Z}/2\mathbb{Z})$ is an isomorphism, and so by Theorem ~\ref{atomicityoflocalisedloopsphere}, $\psi'$ is a $2$-local homotopy equivalence. Therefore, $y_i'+y_j'$ must be odd. As $y_i$ is odd by assumption, $y_i'$ is odd by the previous paragraph, which implies that $y_j'$ must be even. This implies that $y_j' = y_j + 2k_j$ for some $k_j \in \mathbb{Z}$, and this holds for all $j$ for which $y_j$ is even. 
\end{proof}

Lemma \ref{vectorssamemod2} implies that $w' = (y_1+2k_1,\cdots, y_{m_{Y}}+2k_{m_{Y}},\overline{y}_1,\cdots,\overline{y}_{m_{\overline{Y}}})$ for $k_1,\cdots,k_{m_{Y}} \in \mathbb{Z}$.  The next result shows that there exists a vector $\overline{w} \in H_{4n-2}(X)$ with similar properties to $v$. For two vectors $u = (x_1,\cdots,x_n)^T$ and $u' = (x_1',\cdots,x_n')^T$, we say that $u$ and $u'$ are equal modulo $2$ if $x_i \cong x_i' \text{ mod } 2$ for all $i$. 

\begin{lemma}
\label{vectorsecondbot}
    There exists a vector $\overline{w}$ which is equal to $w'$ modulo $2$, and whose non-zero components have greatest common divisor $1$. Moreover, $\phi_*(\overline{w}) = \overline{w}$.
\end{lemma}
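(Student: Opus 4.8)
The plan is to translate the assertion into an elementary statement about the idempotent integer matrix representing $\phi_*$ on primitives in degree $4n-2$. Set $N = m_Y + m_{\overline{Y}}$ and let $B \in M_N(\mathbb{Z})$ be the matrix of the restriction of $\phi_*$ to the primitives of $H_{4n-2}(X)$, with respect to the basis $\{\delta_1,\dots,\delta_{m_Y},\overline{\delta}_1,\dots,\overline{\delta}_{m_{\overline{Y}}}\}$. As in Subsection~\ref{spheresandonemod4}, $\phi_*$ is a graded coalgebra map preserving primitives of each degree, and it is idempotent, so $B$ is an idempotent matrix. By construction $w' = Bw$, so $w'$ lies in the column space $C(B)$, and Lemma~\ref{idemponcolumn} gives $Bw' = w'$.

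Next I would record that $w'$ has an odd component: by Lemma~\ref{vectorssamemod2} some $y_j$ is odd, and the $j$-th component of $w'$ is $y_j'$, which by the same lemma is congruent to $y_j$ modulo $2$ and hence odd. In particular $w' \neq 0$, so writing $d \geq 1$ for the greatest common divisor of its components we may factor $w' = d\,\overline{w}$ with $\overline{w} \in \mathbb{Z}^N$, and by construction the non-zero components of $\overline{w}$ have greatest common divisor $1$. Since $d$ divides the odd component of $w'$ located above, $d$ is odd; hence $w' = d\,\overline{w} \equiv \overline{w} \pmod{2}$ coordinatewise, that is, $\overline{w}$ is equal to $w'$ modulo $2$. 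This gives the first two required properties of $\overline{w}$.

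For the last property, $\phi_*(\overline{w}) = \overline{w}$ is the assertion $B\overline{w} = \overline{w}$. Applying $B$ to $w' = d\,\overline{w}$ and using $Bw' = w'$ yields $d\,B\overline{w} = d\,\overline{w}$; since $\mathbb{Z}^N$ is torsion free and $d \neq 0$, cancelling $d$ gives $B\overline{w} = \overline{w}$, so $\overline{w}$ lies in $C(B)$ and is fixed by $\phi_*$. Identifying $\overline{w}$ with the primitive class $\sum_{i=1}^{m_Y}(\overline{w})_i\,\delta_i + \sum_{j=1}^{m_{\overline{Y}}}(\overline{w})_{m_Y+j}\,\overline{\delta}_j \in H_{4n-2}(X)$ then completes the argument.

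I expect the only genuinely load-bearing input to be the oddness of $d$, which is exactly what Lemma~\ref{vectorssamemod2} was set up to supply; everything else is routine integer linear algebra. As an alternative to the torsion-free cancellation step one could observe that $C(B)$ is a direct summand of $\mathbb{Z}^N$ by Lemma~\ref{Zndecompidemp}, hence a pure subgroup, and conclude $\overline{w} \in C(B)$ from $d\,\overline{w} \in C(B)$ that way.
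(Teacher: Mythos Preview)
Your proposal is correct and follows essentially the same route as the paper: factor $w' = d\,\overline{w}$ with $d$ the gcd of its components, use Lemma~\ref{vectorssamemod2} to see that some component of $w'$ is odd so $d$ is odd, whence $\overline{w} \equiv w' \pmod 2$, and cancel $d$ in $d\,\phi_*(\overline{w}) = \phi_*(w') = w' = d\,\overline{w}$ to get $\phi_*(\overline{w}) = \overline{w}$. Your version is slightly more explicit about the matrix $B$ and the torsion-free cancellation, but the argument is the same.
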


\begin{proof}
Recall that $y_i+2k_i = y_i'$. Let $d$ be the greatest common divisor of the non-zero components of $y_1+2k_1,\cdots, y_{m_{Y}}+2k_{m_{Y}},\overline{y}_1,\cdots,\overline{y}_{m_{\overline{Y}}}$. Then $w' = d\overline{w}$ for some vector $\overline{w}$. Lemma ~\ref{vectextendbasis} implies that one of $y_1,\cdots,y_{m_{Y}}$ is odd, and so it follows that $d$ is odd. By definition of $\overline{w}$, the greatest common divisor of the non-zero components of $\overline{w}$ is 1. Since $\phi_*$ is an idempotent map, \[d\overline{w} = w' = \phi_*(w') = d\phi_*(\overline{w}),\] which implies that $\phi_*(\overline{w}) = \overline{w}$. Moreover, since $d$ is odd, it follows that $\overline{w}$ is equal to $w'$ modulo $2$.
\end{proof}

 Lemma ~\ref{vectorsecondbot} implies there exists a vector $\overline{w} = (y_1+2k'_1,\cdots, y_{m_{Y}}+2k'_{m_Y},\overline{y}_1+2\overline{k}_1,\cdots,\overline{y}_{m_{\overline{Y}}}+2\overline{k}_{m_{\overline{Y}}})$ for $k'_1,\cdots,k'_{m_Y},\overline{k}_1,\cdots,\overline{k}_{m_{\overline{Y}}} \in \mathbb{Z}$ such that the greatest common divisor of \[y_1+2k'_1,\cdots, y_{m_{Y}}+2k'_{m_Y},\overline{y}_1+2\overline{k}_1,\cdots,\overline{y}_{m_{\overline{Y}}}+2\overline{k}_{m_{\overline{Y}}}\] is 1. For an integer $k$, let $\lambda_{k}:\Omega S^{4n-1} \rightarrow \Omega S^{2n}$ be $p_k \circ\Omega[id,id]$, where $p_k$ is the $k^{th}$ power map and $id:S^{2n} \rightarrow S^{2n}$ is the identity map. By Lemma ~\ref{loopedhopfmapmulti}, $(\Omega [id,id])_*$ sends a generator $\tau \in H_{4n-2}(\Omega S^{4n-1})$ to the element $2\delta \in H_{4n-2}(\Omega S^{2n})$ where $\delta$ is a generator of $H_{4n-2}(\Omega S^{2n})$. Therefore by definition of $\lambda_{k}$, $(\lambda_{k})_*$ sends $\tau$ to $2k\delta$. Let $h_2:\Omega S^{2n} \rightarrow \Omega S^{4n-1}$ be the $2^{nd}$ James-Hopf invariant. By Lemma ~\ref{JamesHopfidentity}, $(h_2)_*$ sends $\delta$ to $\tau$. For an integer $k$, let $\eta_{k}$ be the composite \[\eta_{k}:\Omega S^{2n}\xrightarrow{h_2} \Omega S^{4n-1} \xrightarrow{\lambda_{k}} \Omega S^{2n}.\] The map $(\eta_{k})_*$ is trivial on $H_{2n-1}(\Omega S^{2n})$ and sends $\delta$ to $2k\delta$. We adjust the map $\rho_v$ by defining the map $\overline{\rho}_v$ as the composite \[\overline{\rho}_v:\Omega S^{2n} \xrightarrow{\Delta} \prod\limits_{i=1}^{m_{Y}} \Omega S^{2n} \times \prod\limits_{j=1}^{m_{\overline{Y}}} \Omega S^{2n} \xrightarrow{\prod\limits_{i=1}^{m_{Y}}\Delta \times \prod\limits_{j=1}^{m_{\overline{Y}}} h_2} \prod\limits_{i=1}^{m_{Y}} (\Omega S^{2n} \times \Omega S^{2n}) \times \prod\limits_{j=1}^{m_{\overline{Y}}} \Omega S^{4n-1}\]\[\xrightarrow{\prod\limits_{i=1}^{m_{Y}}(p_{y_i} \times \eta_{k_i'}) \times \prod\limits_{j=1}^{m_{\overline{Y}}} p_{\overline{y}_j+2\overline{k}_j} } \prod\limits_{i=1}^{m_{Y}} (\Omega S^{2n} \times \Omega S^{2n}) \times \prod\limits_{j=1}^{m_{\overline{Y}}} \Omega S^{4n-1} \xrightarrow{\prod\limits_{i=1}^{m_{Y}}\mu \times \prod\limits_{j=1}^{m_{\overline{Y}}} id} \prod\limits_{i=1}^{m_{Y}} \Omega S^{2n} \times \prod\limits_{j=1}^{m_{\overline{Y}}} \Omega S^{4n-1} \hookrightarrow X.\] By definition, $\overline{\rho}_v$ sends the generator $\gamma \in H_{2n-1}(\Omega S^{2n})$ to $v$, and the generator $\delta \in H_{4n-2}(\Omega S^{2n})$ to $\overline{w}$. 

  \begin{lemma}
 \label{firstadjustmenttoe}
 Suppose that $H_{2n-1}(A)$, $n \notin \{1,2,4\}$ contains a primitive generator. Then the composite \[\overline{e}:\Omega S^{2n} \xrightarrow{\overline{\rho}_v} X \xrightarrow{\phi} X \xrightarrow{\rho'_v} \Omega S^{2n}\] induces an isomorphism on $H_{2n-1}(\Omega S^{2n})$. Moreover, $(\overline{e})_*$ maps the generator $\delta \in H_{4n-2}(\Omega S^{2n})$ to an element of the form $(2\overline{k}'+1)\delta$ for some $\overline{k}' \in \mathbb{Z}$.
 \end{lemma}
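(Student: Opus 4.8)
The plan is to trace the generators $\gamma \in H_{2n-1}(\Omega S^{2n})$ and $\delta \in H_{4n-2}(\Omega S^{2n})$ through the three maps $\overline{\rho}_v$, $\phi$, and $\rho'_v$ using the homology calculations already established, and then verify that the resulting composite is an isomorphism in both relevant degrees. For the degree $2n-1$ claim, I would observe that $\overline{\rho}_v$ and $\rho_v$ agree in degree $2n-1$ by construction: the James-Hopf factors $h_2$ and the maps $\eta_{k'_i}$ contribute nothing in degree $2n-1$ (by Lemma~\ref{JamesHopfidentity} and the stated triviality of $(\eta_k)_*$ on $H_{2n-1}$), so $(\overline{\rho}_v)_*(\gamma) = v$ in $H_{2n-1}(X)$ exactly as for $\rho_v$. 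Then $\phi_*(v) = v$ since $v$ lies in the column space of $B_Y$ (Lemma~\ref{idemponcolumn}), and $(\rho'_v)_*(v) = \gamma$ by the defining property $\sum_i c_i y_i = 1$ of the $c_i$. Hence $(\overline{e})_*(\gamma) = \gamma$, establishing the first claim; this is essentially the argument of Lemma~\ref{isoonbottomdegree}.

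For the degree $4n-2$ claim I would follow the element $\delta$. By the last sentence preceding the lemma, $(\overline{\rho}_v)_*(\delta) = \overline{w}$, where $\overline{w} = (y_1+2k'_1,\dots,y_{m_Y}+2k'_{m_Y},\overline{y}_1+2\overline{k}_1,\dots,\overline{y}_{m_{\overline Y}}+2\overline{k}_{m_{\overline Y}})$ in the basis $\{\delta_1,\dots,\delta_{m_Y},\overline{\delta}_1,\dots,\overline{\delta}_{m_{\overline Y}}\}$ of primitives in $H_{4n-2}(X)$. By Lemma~\ref{vectorsecondbot}, $\phi_*(\overline{w}) = \overline{w}$. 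It remains to compute $(\rho'_v)_*(\overline{w})$. Here I would unwind $\rho'_v$: it projects onto $\prod \Omega S^{2n}_i$, applies $\prod p_{c_i}$, and multiplies. The key inputs are that $p_{c_i}$ acts by multiplication by $c_i$ on both $H_{2n-1}$ and $H_{4n-2}$ of $\Omega S^{2n}_i$ (as recorded in the paragraph before Lemma~\ref{vectorssamemod2}, since the generators in these degrees divide Hurewicz images), and that the $\overline{\delta}_j$ (the $\Omega S^{4n-1}$ summands) are killed by $\pi$ since $\rho'_v$ projects only onto the $\Omega S^{2n}_i$ factors. Therefore $(\rho'_v)_*(\overline{w}) = \bigl(\sum_{i=1}^{m_Y} c_i(y_i + 2k'_i)\bigr)\delta = \bigl(\sum_i c_i y_i + 2\sum_i c_i k'_i\bigr)\delta = (1 + 2\overline{k}')\delta$ with $\overline{k}' = \sum_i c_i k'_i$, using $\sum_i c_i y_i = 1$.

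Combining, $(\overline{e})_*(\delta) = (2\overline{k}'+1)\delta$, which is the second assertion; in particular $(\overline{e})_*$ is an isomorphism on $H_{2n-1}(\Omega S^{2n})$ as claimed. The hypothesis that $H_{2n-1}(A)$ contains a primitive generator is used only to guarantee that the matrix $B_Y$ is non-zero, so that the vector $v$ extending to a basis exists and the whole construction is non-vacuous. The main point requiring care — though it is routine once set up — is bookkeeping the action of $\rho'_v$ on the $H_{4n-2}$ primitives: one must check that the James-Hopf maps built into $\overline{\rho}_v$ (not $\rho'_v$) have already been accounted for in producing the vector $\overline{w}$, and that $\rho'_v$ sees only the $\Omega S^{2n}_i$ coordinates of $\overline{w}$, so the $\overline{y}_j$-components play no role in this degree. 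No genuine obstacle is anticipated; the lemma is a computational stepping-stone toward adjusting $\rho'_v$ in degree $4n-2$ in the next lemma.
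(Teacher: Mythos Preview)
Your proof is correct but takes a different route from the paper for the degree $4n-2$ claim. For the degree $2n-1$ isomorphism, you and the paper agree verbatim: $(\eta_k)_*$ is trivial on $H_{2n-1}$, so $(\overline{\rho}_v)_*(\gamma)=v$, and $v$ is fixed by $\phi_*$ and sent to $\gamma$ by $(\rho'_v)_*$. For degree $4n-2$, however, the paper does \emph{not} compute $(\rho'_v)_*(\overline{w})$ explicitly. Instead it argues: since $\overline{e}$ induces an isomorphism on $H_{2n-1}(\Omega S^{2n};\mathbb{Z}/2\mathbb{Z})$, Theorem~\ref{atomicityoflocalisedloopsphere} forces $\overline{e}$ to be a $2$-local homotopy equivalence, and hence $(\overline{e})_*$ must be an isomorphism on $H_{4n-2}(\Omega S^{2n};\mathbb{Z}/2\mathbb{Z})$, so the integral coefficient on $\delta$ is odd.

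Your direct computation---projecting $\overline{w}$ to $\sum_i(y_i+2k'_i)\delta_i$, applying $\prod p_{c_i}$, and multiplying to get $\bigl(\sum_i c_i y_i + 2\sum_i c_i k'_i\bigr)\delta=(1+2\overline{k}')\delta$---is valid and more elementary: it avoids invoking atomicity and even produces the explicit value $\overline{k}'=\sum_i c_i k'_i$. The paper's argument is shorter and more conceptual, but leans on the deeper input of $2$-local atomicity (which it will need again anyway in the next lemma). Your bookkeeping is sound: the $\overline{\delta}_j$ components are indeed killed by $\pi$ since $\rho'_v$ projects only onto the $\Omega S^{2n}_i$ factors, and no cross terms $\gamma_i\gamma_j$ arise because $\overline{w}$ is a sum of primitives throughout.
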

 \begin{proof}
 The map $(\eta_{k})_*$ is trivial on $H$, and so $(\overline{\rho}_v)_*$ maps the generator $\gamma \in H$ to $v$. The element $v$ is fixed by $\phi_*$. The map $(\rho'_v)_*$ sends $v$ to $\gamma \in H$. Therefore, $\overline{e}$ induces an isomorphism on $H_{2n-1}(\Omega S^{2n})$. By Theorem ~\ref{atomicityoflocalisedloopsphere}, the map $\overline{e}$ is a $2$-local homotopy equivalence. Hence, $(\overline{e})_*$ maps $\delta \in H_{4n-2}(\Omega S^{2n})$ to an element of the form $(2\overline{k}+1)\delta$ for some $\overline{k} \in \mathbb{Z}$.
 \end{proof}

 Now we adjust $\rho'_v$ to obtain an isomorphism on the bottom two non-vanishing degrees in homology. Recall $\overline{w} = (y_1+2k'_1,\cdots, y_{m_{Y}}+2k'_{m_Y},\overline{y}_1+2\overline{k}_1,\cdots,\overline{y}_{m_{\overline{Y}}}+2\overline{k}_{m_{\overline{Y}}})$, and the greatest common divisor of the non-zero components of $\overline{w}$ is $1$. Therefore, by B\'ezout's Lemma, for $1 \leq i \leq m_Y$, and $1 \leq j \leq m_{\overline{Y}}$ there exist $c_i',\overline{c}_j \in \mathbb{Z}$ such that $\sum_{i=1}^{m_Y} c_i' (y_i +2k_i') + \sum_{j=1}^{m_{\overline{Y}}} \overline{c}_j (\overline{y}_j +2\overline{k}_j) = 1$. Let $\lambda'$ be the composite \[\lambda':\prod\limits_{i=1}^{m_Y} \Omega S^{2n} \times \prod\limits_{j=1}^{m_{\overline{Y}}} \Omega S^{4n-1} \xrightarrow{\prod\limits_{i=1}^{m_Y} h_2 \times \prod\limits_{j=1}^{m_{\overline{Y}}} id} \prod\limits_{i=1}^{m_Y} \Omega S^{4n-1} \times \prod\limits_{j=1}^{m_{\overline{Y}}} \Omega S^{4n-1}  \]\[\xrightarrow{\prod\limits_{i=1}^{m_Y} p_{c_i'} \times \prod\limits_{j=1}^{m_{\overline{Y}}} p_{\overline{c}_j} } \prod\limits_{i=1}^{m_Y} \Omega S^{4n-1} \times \prod\limits_{j=1}^{m_{\overline{Y}}} \Omega S^{4n-1}  \xrightarrow{\mu} \Omega S^{4n-1} \xrightarrow{ \lambda_{-\overline{k}'}} \Omega S^{2n}.\] By definition, $\lambda'$ sends the element $\overline{w}$ in $H_{4n-2}\left(\prod_{i=1}^{m_Y} \Omega S^{2n}_i \times \prod_{j=1}^{m_{\overline{Y}}} \Omega S^{4n-1}_j\right)$ to $-2\overline{k}\delta$, and is trivial in degree $2n-1$. Now adjust the map $\rho'_v$ by defining a map $\overline{\rho}'_v$ as the composite \[\overline{\rho}'_v:X \xrightarrow{\pi} \prod\limits_{i=1}^{m_Y} \Omega S^{2n}_i \times \prod\limits_{j=1}^{m_{\overline{Y}}} \Omega S^{4n-1}_j \xrightarrow{\Delta} \left(\prod\limits_{i=1}^{m_Y} \Omega S^{2n}_i \times \prod\limits_{j=1}^{m_{\overline{Y}}} \Omega S^{4n-1}_j\right) \times \left(\prod\limits_{i=1}^{m_Y} \Omega S^{2n}_i \times \prod\limits_{j=1}^{m_{\overline{Y}}} \Omega S^{4n-1}_j\right) \]\[\xrightarrow{\pi \times \lambda'} \left(\prod\limits_{i=1}^{m_Y} \Omega S^{2n}_i\right) \times \Omega S^{2n} \xrightarrow{\rho'_v \times id} \Omega S^{2n} \times \Omega S^{2n} \xrightarrow{\mu} \Omega S^{2n}.\]  Using $\overline{\rho}_v$ and $\overline{\rho}'_v$, we can now conclude that $\Omega S^{2n}$ retracts off $A$ when $Y$.

\begin{lemma}
Suppose that $H_{2n-1}(A)$ contains a primitive generator where $n \notin \{1,2,4\}$. Then $\Omega S^{2n}$ retracts off $A$.    
\end{lemma}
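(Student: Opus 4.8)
The plan is to mirror the strategy of Lemma~\ref{spheresretractoffA} and Proposition~\ref{spherescase}, but with the adjusted maps $\overline{\rho}_v$ and $\overline{\rho}'_v$ replacing $\rho_v$ and $\rho'_v$. Since $H_{2n-1}(A)$ carries a primitive generator, the idempotent induced by $\phi$ on $H_{2n-1}(X)$ is non-zero, so the vector $v$ and all the constructions preceding this statement are available. Put
\[
\overline{e}' \colon \Omega S^{2n} \xrightarrow{\ \overline{\rho}_v\ } X \xrightarrow{\ \phi\ } X \xrightarrow{\ \overline{\rho}'_v\ } \Omega S^{2n}.
\]
I would prove that $\overline{e}'$ is a homotopy equivalence; granting this, the rest is formal, exactly as in Proposition~\ref{spherescase}: since $\phi = f \circ g$ factors through $A$, the map $\overline{e}'$ factors as $\Omega S^{2n} \xrightarrow{g \circ \overline{\rho}_v} A \xrightarrow{\overline{\rho}'_v \circ f} \Omega S^{2n}$, so $(\overline{e}')^{-1} \circ \overline{\rho}'_v \circ f$ is a left homotopy inverse of $g \circ \overline{\rho}_v$, and hence $\Omega S^{2n}$ retracts off $A$.

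The first task is to compute $(\overline{e}')_*$ in the bottom two degrees of $\Omega S^{2n}$. In degree $2n-1$ the maps $\eta_k$ and $\lambda'$ are trivial, so there $\overline{\rho}_v$ and $\overline{\rho}'_v$ induce the same maps as $\rho_v$ and $\rho'_v$; hence $(\overline{e}')_*$ agrees with $(\overline{e})_*$ on $H_{2n-1}(\Omega S^{2n})$, which is an isomorphism by Lemma~\ref{firstadjustmenttoe}. In degree $4n-2$, $(\overline{\rho}_v)_*$ sends the generator $\delta$ to $\overline{w}$, which is fixed by $\phi_*$ by Lemma~\ref{vectorsecondbot}; applying $(\overline{\rho}'_v)_*$, the $\rho'_v$-summand contributes $(2\overline{k}'+1)\delta$ as in Lemma~\ref{firstadjustmenttoe}, while the correction summand built from $\lambda'$ contributes $-2\overline{k}'\delta$. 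Here one uses Lemma~\ref{JamesHopfidentity}, Lemma~\ref{loopedhopfmapmulti}, and the B\'ezout identity $\sum_i c_i'(y_i + 2k_i') + \sum_j \overline{c}_j(\overline{y}_j + 2\overline{k}_j) = 1$, which forces the intermediate class in $H_{4n-2}(\Omega S^{4n-1})$ to be the generator $\tau$ before $\lambda_{-\overline{k}'}$ is applied. Adding, $(\overline{e}')_*(\delta) = \delta$, so $\overline{e}'$ induces isomorphisms on both $H_{2n-1}(\Omega S^{2n})$ and $H_{4n-2}(\Omega S^{2n})$.

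The remaining, and main, step is to promote this to a homotopy equivalence, the difficulty being that $\Omega S^{2n}$ is not atomic at odd primes. Localised at $2$, $\Omega S^{2n}$ is atomic by Theorem~\ref{atomicityoflocalisedloopsphere} (since $2n-1 \notin \{1,3,7\}$), so the isomorphism on $H_{2n-1}(-;\mathbb{Z}/2\mathbb{Z})$ already makes $\overline{e}'$ a $2$-local equivalence. At an odd prime $p$, and rationally, I would use Serre's splitting $\Omega S^{2n} \simeq S^{2n-1} \times \Omega S^{4n-1}$ from Theorem~\ref{oddprimessplitting} (and its rational analogue) and analyse $\overline{e}'$ through it. The restriction of $\overline{e}'$ to the factor $S^{2n-1}$ has trivial $\Omega S^{4n-1}$-coordinate, since $\pi_{2n-1}(\Omega S^{4n-1}) \cong \pi_{2n}(S^{4n-1}) = 0$, and its $S^{2n-1}$-coordinate is a self-map of $S^{2n-1}$ that is an isomorphism on $H_{2n-1}$ (because $H_{2n-1}(\Omega S^{4n-1}) = 0$), hence of degree $\pm 1$ and a homotopy equivalence. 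The restriction of $\overline{e}'$ to the factor $\Omega S^{4n-1}$ has $\Omega S^{4n-1}$-coordinate an isomorphism on $H_{4n-2}$, and since $\Omega S^{4n-1}$ is atomic at every prime and rationally, that coordinate is a $p$-local (respectively rational) equivalence. Reading off the induced endomorphism of $\pi_*(S^{2n-1}) \oplus \pi_*(\Omega S^{4n-1})$, it is upper triangular with these two equivalences on the diagonal, hence an isomorphism in each degree; this is done on homotopy groups rather than homology precisely because $\overline{e}'$ need not be an $H$-map. Therefore $\overline{e}'$ is a $p$-local equivalence for every odd $p$ and a rational equivalence, and together with the $2$-local case and finiteness of type it is an integral homotopy equivalence, which finishes the proof.
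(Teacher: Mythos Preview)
Your proof is correct and follows essentially the same approach as the paper: build $\overline{e}' = \overline{\rho}'_v \circ \phi \circ \overline{\rho}_v$, check it is an isomorphism on $H_{2n-1}$ and $H_{4n-2}$, invoke $2$-local atomicity, and handle odd primes via Serre's splitting together with atomicity of $\Omega S^{4n-1}$. The paper states the odd-prime step in one line, whereas you spell it out as an upper-triangular argument on $\pi_*(S^{2n-1}) \oplus \pi_*(\Omega S^{4n-1})$; this is a legitimate way to justify that step and does not constitute a different route.
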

\begin{proof}
By definition of $\overline{\rho}_v$, the induced map $(\overline{\rho}_v)_*$, sends the generator $\gamma \in H_{2n-1}(\Omega S^{2n})$ to $v$, and the generator $\delta \in H_{4n-2}(\Omega S^{2n})$ to the element $\overline{w}$. By construction, the induced map $\overline{\rho}'_v$ maps the element $v$ to the generator $\gamma \in H_{2n-1}(\Omega S^{2n})$, and maps the element $\overline{w}$ to the generator $\delta \in H_{4n-2}(\Omega S^{2n})$.

Therefore since $v$ and $\overline{w}$ are fixed by $\phi_*$, the composite \[\overline{e}':\Omega S^{2n} \xrightarrow{\overline{\rho}_v} X \xrightarrow{\phi} X \xrightarrow{\overline{\rho}'_v} \Omega S^{2n}\] induces an isomorphism on $H_{2n-1}(\Omega S^{2n})$ and $H_{4n-2}(\Omega S^{2n})$. By Theorem ~\ref{atomicityoflocalisedloopsphere} at the prime $2$, the map $\overline{e}'$ is a $2$-local homotopy equivalence. Therefore, $\overline{e}'$ is also a rational homotopy equivalence. The splitting of $\Omega S^{2n}$ in Theorem ~\ref{oddprimessplitting} and atomicity of loops on odd spheres localised at an odd prime in Theorem ~\ref{atomicityoflocalisedloopsphere} implies that $\overline{e}'$ is a homotopy equivalence when localised at any odd prime. Since $\overline{e}'$ is a homotopy equivalence localised at every prime and rationally, $\overline{e}'$ is an integral homotopy equivalence. Therefore, since $\phi$ factors through $A$, $\Omega S^{2n}$ retracts off $A$. 
\end{proof}

Now arguing as in Proposition ~\ref{spherescase}, we obtain the following result.

\begin{proposition}
\label{loopspheresevencase}
    Let $X \in \mathcal{P}$ and $A$ be a space which retracts off $X$. Suppose that $H_{2n-1}(A)$ contains a primitive generator where $n \notin \{1,2,4\}$. Then there is a homotopy equivalence \[A \simeq \Omega S^{2n} \times A'\] where $A'$ retracts off $X$ and $H_{2n-1}(A')$ contains one fewer primitive generator than $H_{2n-1}(A)$.
    \qedno
\end{proposition}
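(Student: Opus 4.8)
The plan is to follow exactly the template established in Proposition~\ref{spherescase}, adapting it to use the two adjusted maps $\overline{\rho}_v$ and $\overline{\rho}'_v$ in place of $\rho_v$ and $\rho'_v$. Since $H_{2n-1}(A)$ contains a primitive generator, $a$ say, the element $f_*(a) \in H_{2n-1}(X)$ is a nonzero primitive fixed by $\phi_*$, which lies in $\operatorname{im}(\phi_*)$; hence the vector $v$ of its coefficients against $\gamma_1,\dots,\gamma_{m_Y}$ has non-zero components with greatest common divisor $1$ and (by Lemma~\ref{vectextendbasis}) at least one odd component. This is precisely the hypothesis needed to run Subsection~\ref{evendimensionalspheres}, so the preceding lemma applies and gives that $\Omega S^{2n}$ retracts off $A$: there is a map $r \colon A \rightarrow \Omega S^{2n}$ with a right homotopy inverse, coming from the composite $e' = \rho'_v \circ \phi \circ \rho_v$ being a homotopy equivalence and the fact that $\phi$ factors through $A$.

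Next I would reproduce the homotopy fibration diagram from the proof of Proposition~\ref{spherescase} verbatim, with $Y = \Omega S^{2n}$. Let $F$ be the homotopy fibre of $g \colon X \rightarrow A$, and let $A'$ and $X'$ be the homotopy fibres of $r$ and $g \circ r$ respectively, arranged in the $3 \times 3$ diagram of homotopy fibrations. Because $A$ retracts off $X$ it is an $H$-space, so the right homotopy inverse for $r$ splits the bottom fibration and yields $A \simeq \Omega S^{2n} \times A'$. Inspecting homology, $A'$ has the same homology as $A$ except with one fewer copy of the polynomial generator in degree $2n-1$ (and correspondingly in its higher degrees), so $H_{2n-1}(A')$ contains one fewer primitive generator than $H_{2n-1}(A)$. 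Finally, since $g$ has a right homotopy inverse and $X$ is an $H$-space, $X \simeq A \times F \simeq \Omega S^{2n} \times A' \times F$, which exhibits $A'$ as a retract of $X$.

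The genuinely hard work has already been done in Subsection~\ref{evendimensionalspheres} — constructing $\overline{\rho}_v$ and $\overline{\rho}'_v$ so that $\overline{e}'$ is an isomorphism on both $H_{2n-1}(\Omega S^{2n})$ and $H_{4n-2}(\Omega S^{2n})$, and then leveraging $2$-local atomicity (Theorem~\ref{atomicityoflocalisedloopsphere}) together with Serre's odd-primary splitting (Theorem~\ref{oddprimessplitting}) to upgrade this to an integral equivalence. So the main (and only) obstacle in the present proposition is a bookkeeping one: one must check that the retraction $r \colon A \rightarrow \Omega S^{2n}$ genuinely strips off exactly one generator and that the resulting $A'$ is again a retract of $X$, both of which are immediate from the $H$-space splitting argument. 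In short, the proof is: apply the preceding lemma to get the retraction, then argue exactly as in Proposition~\ref{spherescase} to split off $\Omega S^{2n}$ and identify the complementary factor $A'$.
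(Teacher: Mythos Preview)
Your proposal is correct and is exactly the approach the paper takes: it states the proposition with a \qedno after remarking ``arguing as in Proposition~\ref{spherescase}'', i.e.\ invoke the preceding lemma to obtain the retraction $r\colon A\to\Omega S^{2n}$ and then repeat the $3\times3$ homotopy fibration diagram argument verbatim to split off $\Omega S^{2n}$ and exhibit $A'$ as a retract of $X$. (One tiny slip: in your first paragraph the equivalence should be $\overline{e}' = \overline{\rho}'_v\circ\phi\circ\overline{\rho}_v$, not the unadjusted $\rho_v,\rho'_v$.)
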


\subsection{Conclusion of proof}

We can combine the work of the previous sections to conclude that $\mathcal{P}$ is closed under retracts.

\begin{theorem}
\label{retractofPisinP}
Let $X \in \mathcal{P}$, and let $A$ be a space which retracts off $X$. Then $A \in \mathcal{P}$.
\end{theorem}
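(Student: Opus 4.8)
The plan is to run an induction on the total rank of $\widetilde{H}_*(A)$, using the three propositions established in the previous subsections (Proposition~\ref{spherescase}, Proposition~\ref{loopspheresmod3case}, and Proposition~\ref{loopspheresevencase}) as the inductive step. First I would record the base case: if $\widetilde{H}_*(A) = 0$ then $A$ is contractible (it is simply connected, being a retract of a simply connected $H$-space — here one should note $X \in \mathcal{P}$ has $\pi_1$ at worst a product of copies of $\mathbb{Z}$, $\mathbb{Z}/3$ from $S^1$, $S^3$, $S^7$, so strictly speaking one first splits off the rank-one pieces in degree $1$; let me instead take as base case that $A$ has no positive-degree homology, hence $A \simeq \ast \in \mathcal{P}$), and certainly lies in $\mathcal{P}$.

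For the inductive step, suppose $A$ retracts off $X \in \mathcal{P}$ and $\widetilde{H}_*(A) \neq 0$. Let $n$ be the lowest degree with $H_n(A) \neq 0$. Since $A$ is an $H$-space (a retract of the $H$-space $X$), a lowest-degree class is primitive, so $H_n(A)$ contains a primitive generator. Because $X \in \mathcal{P}$ and $f_* : H_*(A) \to H_*(X)$ is injective, this primitive generator maps to a primitive class in $H_n(X)$; consulting the coalgebra structure of $H_*(X)$ forces $n \in \{1,3,7\}$ or $n$ even of the form $2k$ (coming from an $\Omega S^{2k+1}$ factor). I would then split into cases according to $n \bmod 4$ and whether $n$ is the dimension of a sphere:
\begin{itemize}
\item[(i)] If $n \in \{1,2,3,6,7,14\}$ or $n \equiv 0 \pmod 4$, apply Proposition~\ref{spherescase} to write $A \simeq Y \times A'$ with $Y \in \{S^n, \Omega S^{n+1}\}$, $A'$ retracting off $X$, and $\widetilde{H}_*(A')$ of strictly smaller total rank.
\item[(ii)] If $n = 4k-2$ (i.e. $n \equiv 2 \pmod 4$), say $n = 2\ell-1$ with $\ell = 2k-1$ odd: here $\Omega S^{2\ell}$ has its bottom class in degree $2\ell - 1 = n$; apply Proposition~\ref{loopspheresevencase} to split off $\Omega S^{2\ell}$ (i.e. $\Omega S^n{}^{+1}$... more precisely $\Omega S^{n+1}$).
\item[(iii)] Degrees of the form $4k+2$ arising as the \emph{second} non-vanishing degree of an $\Omega S^{2k+1}$ with the primitive generator in the Hurewicz image are handled by Proposition~\ref{loopspheresmod3case}; this case is subsumed once one processes the lowest degree and iterates, since after splitting off the bottom $\Omega S^{2k+1}$ factors the remaining Hurewicz-image primitives in degree $4k+2$ can be removed.
\end{itemize}
In every case we obtain $A \simeq Y \times A'$ where $Y$ is a sphere $S^n$ with $n \in \{1,3,7\}$ or loops on a simply connected sphere, and $A'$ still retracts off $X$ with total homology rank strictly less than that of $A$. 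By the inductive hypothesis $A' \in \mathcal{P}$, and since $\mathcal{P}$ is closed under finite products (and $Y \in \mathcal{P}$ trivially), $A = Y \times A' \in \mathcal{P}$. Finiteness of type of the decomposition follows because $X$ has finite type, so only finitely many factors can be split off in each degree and the process terminates degree by degree.

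The main obstacle is bookkeeping rather than a new idea: one must check that the three propositions genuinely exhaust all degrees in which $H_*(A)$ can carry a primitive generator, and that after splitting off $Y$ the residual space $A'$ is still amenable to the same analysis — in particular that the primitive generators of $H_*(A')$ in a given degree are still either arbitrary (in the sphere/$1 \bmod 4$/$0 \bmod 4$ cases) or in the Hurewicz image (in the $3 \bmod 4$ case), so that the correct proposition applies at the next stage. I would address this by always stripping the \emph{lowest}-dimensional primitive first, which is automatically primitive and automatically in the Hurewicz image (being spherical in a space whose bottom cell is a sphere factor of $X$), so the hypothesis of whichever proposition is invoked is met; the induction on total rank then closes.
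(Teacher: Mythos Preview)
Your overall strategy---induct by always stripping the \emph{lowest}-degree primitive, which is automatically in the Hurewicz image, and apply whichever of Propositions~\ref{spherescase}, \ref{loopspheresmod3case}, \ref{loopspheresevencase} fits that degree---is exactly the paper's approach; the paper organises it as a degree-by-degree sweep (exhausting all primitives in degree $n_0$ before moving to a more highly connected remainder $Z_0$), but this is equivalent to your total-rank induction.

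That said, the case analysis in the body of your argument is garbled and should be fixed before this is written up. First, the claim that a primitive in $H_n(X)$ forces ``$n \in \{1,3,7\}$ or $n$ even'' is false: an $\Omega S^{2k}$ factor contributes a primitive generator in the \emph{odd} degree $2k-1$ (and another in degree $4k-2$), so odd $n \notin \{1,3,7\}$ is perfectly possible and is precisely what Proposition~\ref{loopspheresevencase} handles. Second, your case~(ii) is internally inconsistent: you begin with ``$n = 4k-2$ (i.e.\ $n \equiv 2 \pmod 4$)'', which is even, and then write ``say $n = 2\ell-1$'', which is odd. The correct trichotomy at the lowest degree $n$ is: (a) $n \in \{1,2,3,6,7,14\}$ or $n \equiv 0 \pmod 4$, apply Proposition~\ref{spherescase}; (b) $n$ odd with $n \notin \{1,3,7\}$, apply Proposition~\ref{loopspheresevencase}; (c) $n \equiv 2 \pmod 4$ with $n \notin \{2,6,14\}$, apply Proposition~\ref{loopspheresmod3case}, the Hurewicz hypothesis being automatic since $n$ is the connectivity. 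Your final paragraph essentially says this, so the gap is presentational rather than mathematical, but as written cases~(ii) and~(iii) do not partition the remaining degrees correctly.
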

\begin{proof}
Let $n_0$ be the degree of the lowest non-trivial homology group of $A$, and let $k_0$ be the rank of $H_{n_0}(A)$. Observe that since $n_0$ is the lowest non-trivial degree, each of the primitive generators of $H_{n_0}(A)$ are in the Hurewicz image. Let $Y = S^{n_0}$ if $n_0 \in \{1,3,7\}$ or $Y = \Omega S^{n_0+1}$ otherwise. We claim that $A \simeq \prod\limits_{i=1}^{k_0} Y \times Z_0$ where $Z_0$ has no primitive generators in degree $n_0$, $Z_0$ retracts off $X$.
   
   We proceed by induction. Suppose $k_0 = 1$. Then by Proposition ~\ref{spherescase}, Proposition ~\ref{loopspheresmod3case} or Proposition ~\ref{loopspheresevencase} (depending on the parity of $n_0$), there is a homotopy equivalence \[A \simeq Y \times Z_0\] where $Z_0$ retracts off $X$, and $H_{n_0}(Z_0)$ contains one fewer primitive generator than $H_{n_0}(Z_0)$. Since $k_0 = 1$, $Z_0$ contains no primitive generators in degree $n_0$.

   Now suppose the result is true for $m-1$ and suppose $k_0 = m$. Then by Proposition ~\ref{spherescase}, Proposition ~\ref{loopspheresmod3case} or Proposition ~\ref{loopspheresevencase}, there is a homotopy equivalence \[A \simeq Y \times A'\] where $A'$ retracts off $X$, and $H_{n_0}(A')$ contains one fewer primitive generator than $H_{n_0}(A')$. Therefore, the inductive hypothesis implies that there is a homotopy equivalence $A' \simeq \prod_{i=1}^{m-1} Y \times Z_0$, where $Z_0$ retracts off $X$ and has no primitive generators in degree $n_0$. Hence $A \simeq\prod_{i=1}^{m} Y \times Z_0$ as claimed.

   Observe that $Z_0$ is more highly connected than $A$. Let $n_1$ be the degree of the lowest non-trivial homology group of $Z$, and let $k_1$ be the rank of $H_{n_1}(Z_0)$. We can repeat this argument to obtain a homotopy equivalence $Z_0 \simeq P \times Z_1$ where $P$ is a product of spheres or loops on spheres whose lowest non-trivial homology group is $n_1$, $Z_1$ retracts off $X$, and $Z_1$ is more highly connected than $Z_0$. Since $A$ is of finite type, we can iteratively repeat this argument for each degree of $H_*(A)$ containing a primitive generator. Therefore, we obtain that $A \in \mathcal{P}$. 
\end{proof}

\section{Loop space decompositions of pushouts of polyhedral products as a product of spheres and loops on spheres}
\label{sec:ApptoPP}

Recall from the introduction that $\mathcal{W}$ is the collection of topological spaces which are homotopy equivalent to a finite type wedge of simply connected spheres, and $\mathcal{P}$ is the collection of $H$-spaces which are homotopy equivalent to a finite type product of spheres and loops on simply connected spheres. The purpose of this section is to apply Theorem ~\ref{retractofPisinP} to prove that under mild hypotheses, if a simplicial complex $K$ can be decomposed as a pushout of simplicial complexes for which the loop space of the associated polyhedral product is in $\mathcal{P}$, then $\Omega \caa^K \in \mathcal{P}$.

\begin{theorem}
\label{pushoutofPisinP}
    Let $K$ be a simplicial complex defined as the pushout \[\begin{tikzcd}
	L & {K_1} \\
	{K_2} & K
	\arrow[from=1-1, to=1-2]
	\arrow[from=1-2, to=2-2]
	\arrow[from=1-1, to=2-1]
	\arrow[from=2-1, to=2-2]
\end{tikzcd}\] where either $L = \emptyset$ or $L$ is a proper full subcomplex of $K_1$ and $K_2$. If $\Sigma A_i \in \mathcal{W}$ for all $i$, $\Omega \caa^{K_1} \in \mathcal{P}$ and $\Omega \caa^{K_2} \in \mathcal{P}$, then $\Omega \caa^K \in \mathcal{P}$.
\end{theorem}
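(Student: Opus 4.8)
The plan is to split into the two cases allowed by the hypothesis. When $L = \emptyset$, Proposition~\ref{emptysetdecomposition} applies (with $\tau = \emptyset$) and gives a homotopy equivalence
\[
\caa^{K} \simeq (\mathcal{A}*\mathcal{A}') \vee (\caa^{K_1} \rtimes \mathcal{A}') \vee (\mathcal{A} \ltimes \caa^{K_2}),
\]
so $\Omega\caa^K$ is the loop space of a wedge. When $L$ is a proper full subcomplex of both $K_1$ and $K_2$, Proposition~\ref{fullsubcomplexdecomp} applies and gives
\[
\Omega\caa^{K_1 \cup_L K_2} \simeq \Omega \caa^L \times \Omega\bigl((\mathcal{A} * \mathcal{A}') \vee (G \rtimes \mathcal{A}') \vee (\mathcal{A} \ltimes H)\bigr),
\]
where $G$ and $H$ are the homotopy fibres of the retractions $\caa^{K_1} \to \caa^L$ and $\caa^{K_2} \to \caa^L$. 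In both cases it will suffice to show that the loop space of each wedge summand lies in $\mathcal{P}$, and then invoke Corollary~\ref{loopofwedgeofP}; in the second case one also needs $\Omega\caa^L \in \mathcal{P}$, which follows since $L$ is a full subcomplex of $K_1$ and hence $\caa^L$ retracts off $\caa^{K_1}$, so $\Omega\caa^L$ retracts off $\Omega\caa^{K_1} \in \mathcal{P}$, and then Theorem~\ref{retractofPisinP} does the job.

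I would handle the wedge summands one type at a time. A summand of the form $\mathcal{A}*\mathcal{A}' \simeq \Sigma(\mathcal{A} \wedge \mathcal{A}')$: since each $\Sigma A_i \in \mathcal{W}$, iterated use of Lemma~\ref{joinofWandPinW} (viewing $\mathcal{A} = \prod A_i$ and noting that $\mathcal{A}$ has $\Sigma \mathcal{A} \in \mathcal{W}$ by decomposing the suspension of a product as a wedge of smashes of the $A_i$, and then that the $A_i \in \mathcal{P}$ is not literally needed—rather one applies the wedge decomposition directly) shows $\Sigma(\mathcal{A} \wedge \mathcal{A}') \in \mathcal{W}$, so its loop space is in $\mathcal{P}$ by Hilton--Milnor. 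For a summand of the form $\caa^{K_1} \rtimes \mathcal{A}'$: there is a homotopy equivalence $X \rtimes Y \simeq Y \vee (X \ltimes Y)$ (or one argues directly via the dual of Lemma~\ref{loophalfsmash}), so it reduces to the half-smash case. For $\mathcal{A} \ltimes \caa^{K_2}$: apply Lemma~\ref{loophalfsmashinP} with $X = \mathcal{A}$ and $Y = \caa^{K_2}$, using that $\Sigma\mathcal{A} \in \mathcal{W}$ and $\Omega\caa^{K_2} \in \mathcal{P}$ by hypothesis. In the second case the same applies to $G \rtimes \mathcal{A}'$ and $\mathcal{A} \ltimes H$ once one knows $\Omega G, \Omega H \in \mathcal{P}$: but $G$ is the homotopy fibre of a retraction, so from the fibration $G \to \caa^{K_1} \to \caa^L$ which splits after looping, $\Omega G$ is a retract of $\Omega \caa^{K_1} \in \mathcal{P}$, hence $\Omega G \in \mathcal{P}$ by Theorem~\ref{retractofPisinP}; likewise for $\Omega H$. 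One must also check $\Sigma G, \Sigma H \in \mathcal{W}$ for the half-smash lemma to apply—here I would note that $\Sigma G$ retracts off $\Sigma\caa^{K_1}$ (looping and suspending interact well with the splitting), and $\Sigma\caa^{K_1} \in \mathcal{W}$ since $\Omega\caa^{K_1} \in \mathcal{P}$ implies its classifying-type suspension is in $\mathcal{W}$—actually the cleanest route is that $\caa^{K_1} \in \mathcal{W}$ need not hold, so instead I would use Lemma~\ref{loophalfsmash} directly on $\mathcal{A} \ltimes H$ to get $\Omega(\mathcal{A}\ltimes H) \simeq \Omega(\mathcal{A} * \Omega H) \times \Omega H$ and then show $\Sigma(\mathcal{A}\wedge \Omega H) \in \mathcal{W}$ via Lemma~\ref{joinofWandPinW} since $\Omega H \in \mathcal{P}$ and $\Sigma \mathcal{A}\in\mathcal{W}$.

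The main obstacle I anticipate is the bookkeeping around the half-smash summands involving $G$ and $H$: unlike $\caa^{K_2}$, these spaces are homotopy fibres and are not a priori known to have suspension in $\mathcal{W}$, so Lemma~\ref{loophalfsmashinP} cannot be applied verbatim. The right move is to not require $\Sigma G \in \mathcal{W}$ at all, but instead expand the half-smash via Lemma~\ref{loophalfsmash} to $\Omega(X * \Omega Y) \times \Omega Y$ and feed $\Omega Y \in \mathcal{P}$ (which one does have, by the retract argument) into Lemma~\ref{joinofWandPinW}; this reduces everything to the single fact that a join $\Sigma(X \wedge Z)$ with $\Sigma X \in \mathcal{W}$ and $Z \in \mathcal{P}$ has its suspension in $\mathcal{W}$ and hence its loop space in $\mathcal{P}$. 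With that observation, all summands in both decompositions are disposed of, Corollary~\ref{loopofwedgeofP} assembles the wedge, and in the $L \neq \emptyset$ case the extra factor $\Omega\caa^L$ is in $\mathcal{P}$ since $\mathcal{P}$ is closed under products and under retracts. This completes the proof.
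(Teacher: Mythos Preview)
Your approach is essentially the same as the paper's: split into the two cases, apply Proposition~\ref{emptysetdecomposition} or Proposition~\ref{fullsubcomplexdecomp}, reduce via Corollary~\ref{loopofwedgeofP} to showing each wedge summand has loop space in $\mathcal{P}$, handle $\mathcal{A}*\mathcal{A}'$ by showing it lies in $\mathcal{W}$, handle the half-smashes via Lemma~\ref{loophalfsmashinP}, and get $\Omega\caa^L,\Omega G,\Omega H\in\mathcal{P}$ from Theorem~\ref{retractofPisinP}.

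Two minor points where you made more work for yourself than necessary. First, the claimed equivalence $X\rtimes Y\simeq Y\vee(X\ltimes Y)$ is not correct in general; fortunately you do not need it, since $X\rtimes Y$ is homeomorphic to $Y\ltimes X$ by swapping factors, and that is exactly the ``dual'' you allude to. Second, your worry that Lemma~\ref{loophalfsmashinP} might require $\Sigma G,\Sigma H\in\mathcal{W}$ is misplaced: after the swap, $G\rtimes\mathcal{A}'\cong\mathcal{A}'\ltimes G$, so in Lemma~\ref{loophalfsmashinP} the role of $X$ is played by $\mathcal{A}'$ (for which $\Sigma\mathcal{A}'\in\mathcal{W}$) and the role of $Y$ by $G$ (for which you only need $\Omega G\in\mathcal{P}$). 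The paper proceeds exactly this way, so your detour through re-deriving the lemma via Lemma~\ref{loophalfsmash} and Lemma~\ref{joinofWandPinW} is correct but unnecessary.
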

\begin{proof}
    If $L$ is the empty set, since $K = K_1 \cup_{L} K_2$, by Proposition ~\ref{emptysetdecomposition}, we obtain a homotopy equivalence \begin{equation}\label{emptysetcase}\Omega \caa^K \simeq \Omega((\mathcal{A} * \mathcal{A}') \vee (\caa^{K_1} \rtimes \mathcal{A}') \vee (\mathcal{A} \ltimes \caa^{K_2}))\end{equation} where $\mathcal{A}$ and $\mathcal{A}'$ are a product of $A_i$'s. If $L$ is a full subcomplex of $K_1$ and $K_2$, since $K = K_1 \cup_{L} K_2$, the simplicial complex $K$ satisfies the hypothesis of Proposition ~\ref{fullsubcomplexdecomp}, so there is a homotopy equivalence \begin{equation}\label{decompositiontouse}\Omega \caa^K \simeq \Omega \caa^L \times  \Omega\left((\mathcal{A} * \mathcal{A}') \vee (G \rtimes \mathcal{A}') \vee (\mathcal{A} \ltimes H)\right) \end{equation} where $\mathcal{A}$ and $\mathcal{A}'$ are a product of $A_i$'s, and $G$ and $H$ are the homotopy fibres of the retractions $f_1:\caa^{K_1} \rightarrow \caa^L$ and $f_2:\caa^{K_2} \rightarrow \caa^L$ respectively. Since $\Omega \caa^L$ retracts off $\Omega \caa^{K_1}$, Theorem ~\ref{retractofPisinP} implies that $\Omega \caa^{L} \in \mathcal{P}$. By Corollary ~\ref{loopofwedgeofP}, to show that the decompositions in (\ref{emptysetcase}) and (\ref{decompositiontouse}) are in $\mathcal{P}$, it suffices to show that each of $\Omega (\mathcal{A}*\mathcal{A}')$, $\Omega(\caa^{K_1} \rtimes \mathcal{A}')$, $\Omega(\mathcal{A} \ltimes \caa^{K_2})$ are in $\mathcal{P}$ for (\ref{emptysetcase}), and additionally $\Omega(G \rtimes \mathcal{A}')$ and $\Omega(\mathcal{A} \ltimes H)$ are in $\mathcal{P}$ for (\ref{decompositiontouse}).

    Since $\mathcal{A}$ and $\mathcal{A}'$ are products of $A$'s and $\Sigma A \in \mathcal{W}$ by assumption, it follows that $\Sigma \mathcal{A} \in \mathcal{W}$, $\Sigma \mathcal{A}' \in \mathcal{W}$ and $\mathcal{A} *\mathcal{A}' \in \mathcal{W}$. Therefore, the Hilton-Milnor theorem implies that $\Omega (\mathcal{A} * \mathcal{A}') \in \mathcal{P}$. Since $\Omega \caa^{K_1} \in \mathcal{P}$ and $\Omega \caa^{K_2} \in \mathcal{P}$ by hypothesis, and $\Sigma \mathcal{A},\: \Sigma \mathcal{A}' \in \mathcal{W}$, by Lemma ~\ref{loophalfsmashinP}, $\Omega (\caa^{K_1} \rtimes \mathcal{A}') \in \mathcal{P}$ and $\Omega(\mathcal{A} \ltimes \caa^{K_2})\in \mathcal{P}$. Therefore, if $L$ is the empty set, then $\Omega \caa^K \in \mathcal{P}$.
    
    Now consider $G \rtimes \mathcal{A}'$. By Lemma ~\ref{loophalfsmashinP}, to show $\Omega (G \rtimes \mathcal{A}') \in \mathcal{P}$, it suffices to show that $\Omega G \in \mathcal{P}$. The map $f_1: \caa^{K_1} \rightarrow \caa^{L}$ has a right homotopy inverse, which implies that there is a homotopy equivalence $\Omega \caa^{K_1} \simeq \Omega \caa^L \times \Omega G$. Therefore $\Omega G$ retracts off $\Omega \caa^{K_1}$. Since $\Omega \caa^{K_1} \in \mathcal{P}$ by hypothesis, Theorem ~\ref{retractofPisinP} implies $\Omega G \in \mathcal{P}$. A similar argument shows that $\Omega (\mathcal{A} \ltimes H) \in \mathcal{P}$. Hence $\Omega \caa^K \in \mathcal{P}$.
\end{proof}

The next result will be used to show that if $K$ is the $k$-skeleton of a simplex, then the loop space of certain polyhedral products is in $\mathcal{P}$. The following result was first proved by Porter \cite[Theorem 1]{P} in the $(\underline{C \Omega A},\underline{\Omega A})^K$ case, and was generalised independently by \cite[Theorem 1.1]{GT1} and \cite[Theorem 1.7]{IK1} for general polyhedral products of the form $\caa^K$.

\begin{proposition}
\label{shifteddecomp}
    Let $K$ be the $k$-skeleton of $\Delta^{m-1}$. Then there is a homotopy equivalence \[\caa^K \simeq \bigvee\limits_{j=k+2}^m\left(\bigvee\limits_{1 \leq i_1 < \cdots < i_j \leq m} (\Sigma^{k+1} A_{i_1} \wedge \cdots \wedge A_{i_j})^{\vee \binom{j-1}{k+1}}\right).\]
    \qedno
\end{proposition}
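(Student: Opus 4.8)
The plan is to prove the statement by induction on $m$, peeling off one vertex at a time and using Proposition~\ref{emptysetdecomposition} at each step. Write $K = \mathrm{sk}_k(\Delta^{m-1})$, the $k$-skeleton of the full simplex on $[m]$. The key structural observation is that $K$ decomposes as a union along a common face: let $K_1 = \mathrm{sk}_k(\Delta^{m-2})$ be the $k$-skeleton of the simplex on $\{1,\dots,m-1\}$, let $K_2 = \mathrm{star}_K(m)$ be the closed star of the vertex $m$ in $K$, and let $\tau = \mathrm{link}_K(m) = \mathrm{sk}_{k-1}(\Delta^{m-2}_{\{1,\dots,m-1\}})$ be the common face along which they are glued (here $\Delta^{m-2}$ is on vertices $\{1,\dots,m-1\}$, and when $k=0$ one interprets things so that $\tau = \emptyset$). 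Then $K = K_1 \cup_\tau K_2$, and $K_2$ is the cone on $\tau$ with cone point $m$, so $\caa^{K_2}$ is simply the product $\prod_{i \in \tau} X_i$ form --- more precisely one checks $K_2$ deformation retracts appropriately, or one directly identifies $\caa^{K_2}$. Actually the cleaner route is to observe that $\mathrm{star}_K(m)$, being a cone, contributes trivially and Proposition~\ref{emptysetdecomposition} with $K_2$ a single simplex (or cone) gives the recursion.

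First I would set up the induction carefully. The base case is $m = k+1$, where $K = \Delta^{k}$ is the full simplex, so $\caa^K = \prod_{i=1}^{k+1} X_i$ where $(X_i,A_i) = (CA_i, A_i)$; since each $CA_i$ is contractible, $\caa^K \simeq \prod A_i$... wait, that is not right either --- in the polyhedral product $\caa$ the pair is $(CA_i, A_i)$ and the top simplex uses $CA_i$ everywhere, so $\caa^{\Delta^{k}} = \prod_{i=1}^{k+1} CA_i \simeq \ast$, which matches the right-hand side (empty wedge, since the sum $\bigvee_{j=k+2}^{m}$ is empty when $m = k+1$). Good. For the inductive step, assume the formula holds for the $k$-skeleton of $\Delta^{m-2}$. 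Apply Proposition~\ref{emptysetdecomposition} to $K = K_1 \cup_\tau K_2$ where $K_2 = \mathrm{star}_K(m)$ is a cone on $\tau$. In the notation of that proposition, $K_1$ is on $\{1,\dots,m-1\}$, $K_2$ is on $\{1,\dots,m-1\} \cup \{m\}$ with the shared face $\tau$ on $\{1,\dots,m-1\}$; since $K_2$ is a cone, $\caa^{K_2} \simeq \caa^\tau \simeq \ast$ unless $\tau$ itself is the full boundary situation --- here $\caa^{K_2}$, being the polyhedral product of a cone, is contractible (the cone point coordinate $CA_m$ can be contracted). Hmm, I need $\mathcal{A}$ and $\mathcal{A}'$ to make sense: set things up so $\mathcal{A}' = A_m$ and $\mathcal{A}$ trivial, giving $\caa^K \simeq (\ast * A_m) \vee (\caa^{K_1} \rtimes A_m) \vee (\ast \ltimes \caa^{K_2})$, but this needs the vertex sets arranged as in the proposition. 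The correct bookkeeping: take the last vertex to be $m$, so $\mathcal{A}' = A_m$, and $\caa^{K_1} \rtimes A_m = \caa^{K_1} \times A_m / (\ast \times A_m) = \caa^{K_1} \ltimes A_m$... I would get the homotopy equivalence $\caa^K \simeq (\caa^{K_1} \rtimes A_m) \vee \Sigma(\text{smash terms}) \vee (\text{cone contributions})$.

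The substantive combinatorial work is then to verify that unpacking the recursion $\caa^{\mathrm{sk}_k \Delta^{m-1}} \simeq \bigl(\caa^{\mathrm{sk}_k \Delta^{m-2}} \rtimes A_m\bigr) \vee (\text{new wedge summands from the star of } m)$ reproduces the stated closed form. Concretely: the right half-smash $W \rtimes A_m$ of a wedge of suspensions $W = \bigvee \Sigma^{k+1}(\dots)$ satisfies $W \rtimes A_m \simeq W \vee (W \wedge A_m)$, and $\Sigma^{k+1}(A_{i_1} \wedge \cdots \wedge A_{i_j}) \wedge A_m = \Sigma^{k+1}(A_{i_1} \wedge \cdots \wedge A_{i_j} \wedge A_m)$, so the multi-indices not containing $m$ are inherited from $K_1$ and those containing $m$ arise by appending $m$; the extra star-of-$m$ summands (which are themselves, by the cone structure, $\mathrm{sk}_{k-1}$-type objects suspended once more) provide exactly the terms $(\Sigma^{k+1} A_{i_1}\wedge\cdots\wedge A_{i_j})^{\vee\binom{j-1}{k+1}}$ with $m \in \{i_1,\dots,i_j\}$. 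The binomial coefficient identity to check reduces to the Pascal relation $\binom{j-1}{k+1} = \binom{j-2}{k+1} + \binom{j-2}{k}$, where the first term counts index sets avoiding $m$ and the second counts those containing $m$; I would verify this matches the multiplicities produced by $\rtimes A_m$ plus the star contribution.

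The main obstacle I anticipate is not the homotopy theory --- Proposition~\ref{emptysetdecomposition} does the heavy lifting --- but the careful simplicial bookkeeping: correctly identifying $\mathrm{star}_K(m)$ and $\mathrm{link}_K(m)$ as skeleta of smaller simplices, getting the vertex-set conventions of Proposition~\ref{emptysetdecomposition} to line up (it requires the shared face to sit on an interval of vertices, so one reindexes), and then matching the iterated wedge decomposition against the binomial-coefficient closed form via the Pascal identity. A cleaner alternative, which I would mention as a remark, is to cite the result directly: the stated homotopy equivalence is exactly \cite[Theorem~1.1]{GT1} (equivalently \cite[Theorem~1.7]{IK1}, or Porter's original \cite[Theorem~1]{P} in the $(\underline{C\Omega A},\underline{\Omega A})$ case), since the $k$-skeleton of a simplex is a shifted complex, and for shifted complexes $\caa^K$ splits as a wedge of (suspensions of) smash products of the $A_i$; the only task is then to read off the multiplicities, which are computed in those references. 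Given that the excerpt already attributes the proposition to those sources, the honest proof is simply to invoke them and record why the exponents $\binom{j-1}{k+1}$ are correct.
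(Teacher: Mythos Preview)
The paper does not prove this proposition at all: it is stated with a $\square$ and attributed to \cite[Theorem~1]{P}, \cite[Theorem~1.1]{GT1}, and \cite[Theorem~1.7]{IK1}. Your closing remark --- that the honest proof is simply to invoke those references and read off the binomial multiplicities --- is therefore exactly the paper's approach, and that part of your proposal is correct and sufficient.

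Your main inductive sketch, however, has a genuine gap. Proposition~\ref{emptysetdecomposition} requires the gluing subcomplex $\tau$ to be a single \emph{face} (i.e.\ a simplex) common to $K_1$ and $K_2$. In your decomposition $K = K_1 \cup K_2$ with $K_1 = \mathrm{sk}_k(\Delta^{m-2})$ and $K_2 = \mathrm{star}_K(m)$, the intersection is $\mathrm{link}_K(m) = \mathrm{sk}_{k-1}(\Delta^{m-2})$, which is not a single simplex once $m-1 > k$; so Proposition~\ref{emptysetdecomposition} does not apply. Proposition~\ref{fullsubcomplexdecomp} does not save you either: its hypothesis is that $L$ be a \emph{full} subcomplex of both $K_1$ and $K_2$, but $\mathrm{sk}_{k-1}(\Delta^{m-2})$ has the same vertex set as $K_1 = \mathrm{sk}_k(\Delta^{m-2})$ and is therefore not full in $K_1$ (the full subcomplex on that vertex set is $K_1$ itself). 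There is also a vertex-set mismatch with the statement of Proposition~\ref{emptysetdecomposition}, which requires $K_1$ and $K_2$ to overlap only on the vertices of $\tau$; in your setup $K_2$ already contains every vertex of $K_1$. So the recursion $\caa^{\mathrm{sk}_k\Delta^{m-1}} \simeq (\caa^{\mathrm{sk}_k\Delta^{m-2}} \rtimes A_m) \vee (\cdots)$ is not obtainable from the tools in the paper, and the inductive argument as written does not go through. Drop that part and keep only the citation.
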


This proposition can be used to prove the following lemma.

\begin{lemma}
\label{shiftedinP}
    Let $K$ be the $k$-skeleton of $\Delta^{m-1}$ and $A_1,\cdots,A_m$ be spaces such that $\Sigma A_i \in \mathcal{W}$ for all $i$, then $\caa^K \in \mathcal{W}$.
\end{lemma}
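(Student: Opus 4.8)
The plan is to deduce Lemma~\ref{shiftedinP} directly from the wedge decomposition in Proposition~\ref{shifteddecomp} together with the closure properties of $\mathcal{W}$ established earlier in the excerpt. Since $\caa^K$ is homotopy equivalent to an iterated wedge of spaces of the form $\Sigma^{k+1} A_{i_1} \wedge \cdots \wedge A_{i_j}$ (each taken a finite number of times), and $\mathcal{W}$ is evidently closed under finite-type wedges, it suffices to show that each smash summand $\Sigma^{k+1}(A_{i_1} \wedge \cdots \wedge A_{i_j})$ lies in $\mathcal{W}$.

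First I would reduce to a single smash factor: writing $\Sigma^{k+1}(A_{i_1}\wedge\cdots\wedge A_{i_j}) \cong (A_{i_2}\wedge\cdots\wedge A_{i_j}) \wedge \Sigma^{k+1} A_{i_1}$ by commuting the smash and shifting suspension coordinates, I would peel off $\Sigma A_{i_1}$, which is in $\mathcal{W}$ by hypothesis, hence homotopy equivalent to a wedge of simply connected spheres $\Sigma W_1$. Then $\Sigma^{k+1}(A_{i_1}\wedge\cdots\wedge A_{i_j}) \simeq \Sigma^{k}\big((A_{i_2}\wedge\cdots\wedge A_{i_j})\wedge \Sigma W_1\big)$, and distributing the smash over the wedge $W_1$ and pushing all suspension coordinates together realizes this as a wedge of spaces of the form $\Sigma^{N}(A_{i_2}\wedge\cdots\wedge A_{i_j})$ with $N \geq 2$. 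Iterating this peeling argument over $A_{i_2},\ldots,A_{i_j}$ (a finite induction, say on $j$), each step replacing one $A_{i_\ell}$ by a wedge of spheres and absorbing it, eventually expresses $\Sigma^{k+1}(A_{i_1}\wedge\cdots\wedge A_{i_j})$ as a finite-type wedge of iterated smashes of simply connected spheres. A smash of simply connected spheres is again a sphere, so the whole thing is a finite-type wedge of simply connected spheres, i.e.\ lies in $\mathcal{W}$. This is essentially the same bookkeeping already carried out in the proof of Lemma~\ref{joinofWandPinW}, so I would cite or mirror that argument rather than repeat it in full.

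Combining these observations: Proposition~\ref{shifteddecomp} gives $\caa^K \simeq \bigvee_{j=k+2}^m \bigvee_{1\leq i_1<\cdots<i_j\leq m} (\Sigma^{k+1} A_{i_1}\wedge\cdots\wedge A_{i_j})^{\vee\binom{j-1}{k+1}}$, each wedge summand is in $\mathcal{W}$ by the previous paragraph, and a finite wedge of spaces in $\mathcal{W}$ is in $\mathcal{W}$; hence $\caa^K \in \mathcal{W}$.

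The main obstacle — really the only point requiring care — is the smash/suspension bookkeeping that shows each summand $\Sigma^{k+1}(A_{i_1}\wedge\cdots\wedge A_{i_j})$ is a wedge of simply connected spheres: one must be careful that after distributing smashes over wedges the resulting spheres are genuinely simply connected, which is guaranteed here because $\Sigma A_i \in \mathcal{W}$ forces each $\Sigma A_i$ to be a wedge of spheres of dimension $\geq 2$, and the extra suspension coordinates $\Sigma^{k+1}$ with $k\geq 0$ only raise dimensions further. Everything else is a routine application of closure of $\mathcal{W}$ under finite-type wedges.
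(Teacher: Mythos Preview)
Your proposal is correct and follows essentially the same route as the paper: apply Proposition~\ref{shifteddecomp} and then argue that each summand $\Sigma^{k+1} A_{i_1}\wedge\cdots\wedge A_{i_j}$ lies in $\mathcal{W}$ by shifting suspension coordinates. The paper's proof is a one-line assertion of exactly this, whereas you spell out the inductive peeling (mirroring Lemma~\ref{joinofWandPinW}) in detail; the content is the same.
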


\begin{proof}
Since $\Sigma A_i \in \mathcal{W}$ for all $i$, by shifting the suspension coordinate it follows that $\Sigma^{k+1} A_{i_1} \wedge \cdots \wedge A_{i_j} \in \mathcal{W}$.
\end{proof}

For a general simplicial complex $K$, a general decomposition of $K$ will be required in order to apply Theorem ~\ref{pushoutofPisinP}. Let $V(K)$ be the vertex set of $K$ and for a subset $S \subseteq V(K)$, let $K_{S}$ be the full subcomplex of $K$ on the vertices of $S$. For a vertex $v \in V(K)$, denote by $N(v)$ the set of vertices adjacent to $v$ in the $1$-skeleton of $K$.

\begin{lemma}
\label{decomposesimpcomp}
    Let $K$ be a simplicial complex and $v \in V(K)$. Then $K$ can be written as the pushout \[\begin{tikzcd}
	{K_{N(v)}} & {K_{v \cup N(v)}} \\
	{K_{V(K) \setminus \{v\}}} & K.
	\arrow[from=2-1, to=2-2]
	\arrow[from=1-2, to=2-2]
	\arrow[from=1-1, to=1-2]
	\arrow[from=1-1, to=2-1]
\end{tikzcd}\] Moreover, $K_{N(v)}$ is a full subcomplex of both $K_{v \cup N(v)}$ and $K_{V(K) \setminus \{v\}}$.
\end{lemma}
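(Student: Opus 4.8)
The plan is to verify the pushout claim at the level of sets of simplices, and then to identify the inclusion maps. First I would recall that, by definition, a simplex $\sigma \in K$ is determined by its vertex set, and for a subset $S \subseteq V(K)$ the full subcomplex $K_S$ consists of exactly those simplices of $K$ all of whose vertices lie in $S$. So I would argue at the level of simplex sets: a simplex $\sigma$ of $K$ either does not contain $v$, in which case $\sigma \in K_{V(K)\setminus\{v\}}$, or it does contain $v$. In the latter case every vertex of $\sigma$ other than $v$ is adjacent to $v$ (since $\{v,w\}$ is a face of $K$ for each such $w$), so $\sigma \subseteq \{v\}\cup N(v)$ and hence $\sigma \in K_{v\cup N(v)}$. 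This shows $K = K_{V(K)\setminus\{v\}} \cup K_{v\cup N(v)}$ as a union of subcomplexes.

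Next I would compute the intersection $K_{V(K)\setminus\{v\}} \cap K_{v\cup N(v)}$. A simplex lies in the intersection iff its vertex set is contained in both $V(K)\setminus\{v\}$ and $\{v\}\cup N(v)$, i.e.\ contained in their intersection $N(v)$; thus the intersection is exactly $K_{N(v)}$. Since $K$ is the union of the two subcomplexes and they meet in $K_{N(v)}$, the square is a pushout in the category of simplicial complexes (a pushout along subcomplex inclusions is the union, with the intersection as the corner object). The maps $K_{N(v)} \hookrightarrow K_{v\cup N(v)}$ and $K_{N(v)} \hookrightarrow K_{V(K)\setminus\{v\}}$ and the two maps into $K$ are all the evident inclusions, and commutativity is immediate.

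Finally I would check the "moreover" statement: $K_{N(v)}$ is a full subcomplex of both $K_{v\cup N(v)}$ and $K_{V(K)\setminus\{v\}}$. This follows from transitivity of fullness: $K_{N(v)}$ is the full subcomplex of $K$ on the vertex set $N(v)$, and $N(v) \subseteq \{v\}\cup N(v)$, so $K_{N(v)} = (K_{v\cup N(v)})_{N(v)}$ is the full subcomplex of $K_{v\cup N(v)}$ on those vertices; likewise $N(v) \subseteq V(K)\setminus\{v\}$ gives the other assertion. Concretely, if a simplex of $K_{v\cup N(v)}$ has all its vertices in $N(v)$ then it is already a simplex of $K$ with vertices in $N(v)$, hence lies in $K_{N(v)}$, which is precisely fullness.

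There is no serious obstacle here; the only point requiring care is the observation that a simplex of $K$ containing $v$ has all its other vertices in $N(v)$, which uses that $K$ being a simplicial complex forces every pair of vertices of a simplex to span an edge. The rest is bookkeeping about full subcomplexes and the fact that pushouts of simplicial complexes along inclusions are unions.
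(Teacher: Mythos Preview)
Your proof is correct and follows essentially the same approach as the paper: both show that $K$ is the union of the two subcomplexes, compute their intersection to be $K_{N(v)}$, and deduce fullness from the definition of full subcomplex. Your version is slightly more explicit in justifying why a simplex containing $v$ has all its other vertices in $N(v)$ (via the edge-as-face observation), whereas the paper leaves this implicit, but the argument is otherwise identical.
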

\begin{proof}
    Since $K_{V(K) \setminus \{v\}}$ contains every simplex which does not contain the vertex $v$ and $K_{v \cup N(v)}$ contains every simplex containing $v$, $K_{v \cup N(v)} \cup K_{V(K) \setminus \{v\}} = K$.

    Clearly, $K_{N(v)} \subseteq K_{v \cup N(v)} \cap K_{V(K) \setminus \{v\}}$, so let $\sigma \in K_{v \cup N(v)} \cap K_{V(K) \setminus \{v\}}$. Since $\sigma \in K_{v \cup N(v)}$, $\sigma$ must have vertices in $v \cup N(v)$. However since $\sigma \in K_{V(K) \setminus \{v\}}$, none of the vertices can be $v$. Hence $K_{v \cup N(v)} \cap K_{V(K) \setminus \{v\}} \subseteq K_{N(v)}$, and so $K_{v \cup N(v)} \cap K_{V(K) \setminus \{v\}} = K_{N(v)}$.

    By definition, the subcomplex $K_{N(v)}$ contains every simplex in $K$ on the vertex set $N(v)$. Therefore, it is a full subcomplex of both $K_{v \cup N(v)}$ and $K_{V(K) \setminus \{v\}}$.
    \end{proof}

 We now prove Theorem ~\ref{loopofkskelinP}. Recall that $k \geq 0$, and let $K$ be the $k$-skeleton of a flag complex on the vertex set $[m]$. Let $A_1,\cdots,A_m$ be path connected $CW$-complexes such that $\Sigma A_i \in \mathcal{W}$ for all $i$. Then we wish to prove that $\Omega \caa^{K} \in \mathcal{P}$. A \textit{dominating vertex} of $K$ is a vertex $v$ such that $N(v) = V(K) \setminus \{v\}$. In other words, $v$ is adjacent to every other vertex in the $1$-skeleton of $K$.

\begin{proof}[Proof of Theorem ~\ref{loopofkskelinP}]
    We proceed by strong induction. If $K$ has one vertex, then $\cxx^{K}$ is contractible, and so $\Omega \cxx^{K} \in \mathcal{P}$.

    Now suppose $K$ has $m$ vertices, and the result is true for all $n <m$. Since $K$ is the $k$-skeleton of a flag complex, if every vertex of $K$ is a dominating vertex, then $K$ is the $k$-skeleton of a simplex. In this case, Lemma ~\ref{shiftedinP} implies that $\Omega \caa^{K} \in \mathcal{P}$. Therefore, suppose there exists a vertex $v \in V(K)$ such that $v$ is not a dominating vertex of $K$. By Lemma ~\ref{decomposesimpcomp}, $K$ can be written as the pushout \[\begin{tikzcd}
	{K_{N(v)}} & {K_{v \cup N(v)}} \\
	{K_{V(K) \setminus \{v\}}} & K
	\arrow[from=2-1, to=2-2]
	\arrow[from=1-2, to=2-2]
	\arrow[from=1-1, to=1-2]
	\arrow[from=1-1, to=2-1]
\end{tikzcd}\] where $K_{N(v)}$ is a full subcomplex of both $K_{v \cup N(v)}$ and $K_{V(K) \setminus \{v\}}$. Since $v$ is not a dominating vertex, $K_{v \cup N(v)}$ is not the whole of $K$, and so $K_{v \cup N(v)}$ and $K_{V(K) \setminus \{v\}}$ are simplicial complexes with strictly fewer vertices than $K$. Therefore, by the inductive hypothesis, $\Omega \caa^{K_{v \cup N(v)}} \in \mathcal{P}$ and $\Omega \caa^{K_{V(K) \setminus \{v\}}} \in \mathcal{P}$. Hence, Theorem ~\ref{pushoutofPisinP} implies that $\Omega \caa^{K} \in \mathcal{P}$. 
\end{proof}

\begin{remark}
    In principle, one could iteratively use Proposition ~\ref{emptysetdecomposition}, Proposition ~\ref{fullsubcomplexdecomp} and Lemma ~\ref{decomposesimpcomp} to obtain an explicit decomposition for $\cxx^{K}$. However, in practice, this process would be unwieldy. 
\end{remark}

Theorem ~\ref{loopofkskelinP} also has consequences for other polyhedral products associated to the $k$-skeleton of flag complexes.

\begin{lemma}
\label{fibrepolyinP}
    Let $K$ be a simplicial complex on the vertex set $[m]$ and let $(\underline{X},\underline{A})$ be any sequence of pointed, path-connected $CW$-pairs. Denote by $Y_i$ the homotopy fibre of the inclusion $A_i \rightarrow X_i$. Suppose $\Omega \cyy^K \in \mathcal{P}$ and for $1 \leq i \leq m$,  $\Omega X_i \in \mathcal{P}$ for all $i$. Then $\Omega \uxa^K \in \mathcal{P}$.
\end{lemma}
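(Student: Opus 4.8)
The plan is to split the looped form of the standard fibration connecting the three polyhedral products involved, thereby writing $\Omega\uxa^K$ as a product of $\Omega\cyy^K$ and $\prod_{i=1}^m\Omega X_i$, and then to invoke closure of $\mathcal{P}$ under products.

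First I would recall that, with $Y_i$ the homotopy fibre of the inclusion $A_i\hookrightarrow X_i$, there is a homotopy fibration
\[
\cyy^K \longrightarrow \uxa^K \xrightarrow{\ \iota\ } \prod_{i=1}^m X_i,
\]
where $\iota$ is the inclusion of the polyhedral product into the ambient product (see, e.g., \cite{DS}; when each $A_i=\ast$ this specialises to the familiar fibration $\clxx^{K}\to(\underline{X},\underline{\ast})^{K}\to\prod_{i=1}^m X_i$). Looping, and using that $\Omega$ preserves finite products, gives a homotopy fibration
\[
\Omega\cyy^K \longrightarrow \Omega\uxa^K \xrightarrow{\ \Omega\iota\ } \prod_{i=1}^m \Omega X_i .
\]

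Next I would construct a right homotopy inverse for $\Omega\iota$. For each vertex $i$ the simplex $\{i\}$ lies in $K$, so, using the basepoints of the $A_j$, there is a map $g_i\colon X_i\to\uxa^{\{i\}}\subseteq\uxa^K$ placing $X_i$ into the $i$-th coordinate. The composite $\iota\circ g_i$ is the inclusion of $X_i$ as the $i$-th axis of $\prod_j X_j$, so its $j$-th coordinate is the identity for $j=i$ and constant for $j\neq i$. Let $s\colon\prod_{i=1}^m\Omega X_i\to\Omega\uxa^K$ be the composite of $\prod_{i=1}^m\Omega g_i$ with an $m$-fold loop multiplication on $\Omega\uxa^K$. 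As $\Omega\iota$ is a loop map it respects loop multiplication, and a coordinatewise check using the description of $\iota\circ g_i$ shows that $\Omega\iota\circ s\simeq\mathrm{id}$.

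Finally, a loop fibration whose base map admits a right homotopy inverse splits as a product — exactly the reasoning used in the proof of Lemma~\ref{loophalfsmash} — so $s$ yields a homotopy equivalence
\[
\Omega\uxa^K \simeq \Omega\cyy^K \times \prod_{i=1}^m \Omega X_i .
\]
By hypothesis $\Omega\cyy^K\in\mathcal{P}$, and each $\Omega X_i\in\mathcal{P}$, so $\prod_{i=1}^m\Omega X_i\in\mathcal{P}$ since $\mathcal{P}$ is closed under finite products; hence $\Omega\uxa^K\in\mathcal{P}$. The only real input is the fibration of polyhedral products (and the harmless standing convention that $K$ has vertex set $[m]$, which is what makes the maps $g_i$ available); everything else — the section and the splitting — is routine, so I do not anticipate a genuine obstacle beyond citing that fibration correctly.
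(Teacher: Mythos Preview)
Your proposal is correct and follows essentially the same approach as the paper: use the homotopy fibration $\cyy^K \to \uxa^K \to \prod_{i=1}^m X_i$, split it after looping, and conclude from closure of $\mathcal{P}$ under products. The only difference is that the paper cites \cite[Theorem 2.1]{HST} for both the fibration and its loop splitting, whereas you spell out the section explicitly; your citation of \cite{DS} is for the $A_i=\ast$ special case, so you should point to \cite{HST} for the general statement.
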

\begin{proof}
    By \cite[Theorem 2.1]{HST}, there is a homotopy fibration \[\cyy^K \rightarrow \uxa^K \rightarrow \prod\limits_{i=1}^m X_i\] which splits after looping. Therefore, there is a homotopy equivalence \[\Omega \uxa^K \simeq \prod\limits_{i=1}^m \Omega X_i \times \Omega \cyy^K.\] By assumption, $\Omega X_i \in \mathcal{P}$ for all $i$ and $\Omega \cyy^K \in \mathcal{P}$, and so $\Omega \uxa^K \in \mathcal{P}$.
\end{proof}

When $K$ is the $k$-skeleton of a flag complex, Theorem ~\ref{loopofkskelinP} and Lemma ~\ref{fibrepolyinP} implies the following result.

\begin{corollary}
\label{generalfibgraphinP}
    Let $K$ be the $k$-skeleton of a flag complex on the vertex set $[m]$. Let $(\underline{X},\underline{A})$ be any sequence of pointed, path-connected $CW$-pairs, and denote by $Y_i$ the homotopy fibre of the inclusion $A_i \hookrightarrow X_i$. Suppose $\Omega X_i \in \mathcal{P}$ for all $i$ and $\Sigma Y_i \in \mathcal{W}$ for all $i$. Then $\Omega \uxa^{K} \in \mathcal{P}$.
\end{corollary}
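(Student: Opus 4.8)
The plan is to deduce the result by combining Theorem~\ref{loopofkskelinP} with Lemma~\ref{fibrepolyinP}: the former applied to the polyhedral product built from the homotopy fibres $Y_i$, and the latter applied to the given sequence of pairs $\uxa$.

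First I would record that the hypothesis $\Sigma Y_i \in \mathcal{W}$ guarantees that each $Y_i$ is path-connected. Indeed, a finite type wedge of simply connected spheres has trivial first reduced homology, while $\widetilde{H}_1(\Sigma Y_i) \cong \widetilde{H}_0(Y_i)$, so $\widetilde{H}_0(Y_i) = 0$. Hence the sequence $(\underline{CY},\underline{Y}) = \{(CY_i,Y_i)\}_{i=1}^m$ is a sequence of pairs to which Theorem~\ref{loopofkskelinP} applies, with the spaces $A_1,\dots,A_m$ appearing in that statement taken to be $Y_1,\dots,Y_m$. Since $K$ is the $k$-skeleton of a flag complex on $[m]$ and $\Sigma Y_i \in \mathcal{W}$ for all $i$, Theorem~\ref{loopofkskelinP} yields $\Omega\cyy^K \in \mathcal{P}$.

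Finally I would invoke Lemma~\ref{fibrepolyinP} for the sequence $\uxa$. Its hypotheses are now all in hand: $Y_i$ is by definition the homotopy fibre of $A_i \hookrightarrow X_i$, we have just shown $\Omega\cyy^K \in \mathcal{P}$, and $\Omega X_i \in \mathcal{P}$ for all $i$ by assumption. The lemma then provides a homotopy equivalence $\Omega\uxa^K \simeq \prod_{i=1}^m \Omega X_i \times \Omega\cyy^K$ and, since $\mathcal{P}$ is closed under finite products, concludes $\Omega\uxa^K \in \mathcal{P}$, which is the claim.

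There is no substantial obstacle here: essentially all of the content of the corollary is carried by Theorem~\ref{loopofkskelinP} and Lemma~\ref{fibrepolyinP}, and the argument is just a matter of checking that the hypotheses match up. The one point that deserves a line of justification — and so is the closest thing to a difficulty — is verifying that the homotopy fibres $Y_i$ meet the path-connectedness requirement of Theorem~\ref{loopofkskelinP}, which, as indicated above, is forced by $\Sigma Y_i \in \mathcal{W}$.
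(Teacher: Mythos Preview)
Your proof is correct and follows exactly the same route as the paper: apply Theorem~\ref{loopofkskelinP} to the fibres $Y_i$ to get $\Omega\cyy^K\in\mathcal{P}$, then feed this into Lemma~\ref{fibrepolyinP}. The only difference is that you spell out the path-connectedness of the $Y_i$, a detail the paper leaves implicit.
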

\begin{proof}
    Since $\Sigma Y_i \in \mathcal{W}$ for all $i$, Theorem ~\ref{loopofkskelinP} implies that $\Omega \cyy^{K} \in \mathcal{P}$. By assumption $\Omega X_i \in \mathcal{P}$ for all $i$, so Lemma ~\ref{fibrepolyinP} implies that $\Omega \uxa^{K} \in \mathcal{P}$.
\end{proof}

Corollary ~\ref{generalfibgraphinP} applies to more examples, as follows.

\begin{corollary}
\label{complexprojectivePP}

Let $K$ be the $k$-skeleton of a flag complex on the vertex set $[m]$, and for $1 \leq i \leq m$, let $n_i \in \mathbb{N} \cup \{\infty\}$ and $m_i \in \mathbb{N}$ with $m_i < n_i$. Let $(X_i,A_i) = (\mathbb{C}P^{n_i},\mathbb{C}P^{m_i})$ or $(X_i,A_i) = (\mathbb{C}P^{n_i},*)$ for all $i$. Then $\Omega \uxa^{K} \in \mathcal{P}$.
\end{corollary}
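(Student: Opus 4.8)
The approach is to verify the hypotheses of Corollary ~\ref{generalfibgraphinP} and then quote it: writing $Y_i$ for the homotopy fibre of the inclusion $A_i \hookrightarrow X_i = \mathbb{C}P^{n_i}$, it suffices to check that $\Omega X_i \in \mathcal{P}$ and $\Sigma Y_i \in \mathcal{W}$ for every $i$. Throughout I would use the convention $S^{\infty} \simeq \ast$ and $\mathbb{C}P^{\infty} = K(\mathbb{Z},2)$, so that the case $n_i = \infty$ is covered uniformly by the formulas below.

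The first point is that $\Omega \mathbb{C}P^{n} \in \mathcal{P}$ for $1 \le n \le \infty$. For $n = \infty$ this is $\Omega \mathbb{C}P^{\infty} \simeq S^{1}$, and for finite $n$ I would invoke the classical splitting $\Omega \mathbb{C}P^{n} \simeq S^{1} \times \Omega S^{2n+1}$, obtained by looping the Hopf fibration $S^{1} \to S^{2n+1} \to \mathbb{C}P^{n}$ and splitting off the circle detected by a generator of $\pi_{2}(\mathbb{C}P^{n})$. Since $2n+1$ is odd and at least $3$, the factor $\Omega S^{2n+1}$ is loops on a simply connected sphere, so $\Omega \mathbb{C}P^{n} \in \mathcal{P}$. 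This settles the first hypothesis, and it also settles the second when $(X_i, A_i) = (\mathbb{C}P^{n_i}, \ast)$, since then $Y_i \simeq \Omega \mathbb{C}P^{n_i} \in \mathcal{P}$ and hence $\Sigma Y_i \in \mathcal{W}$, as suspension carries $\mathcal{P}$ into $\mathcal{W}$.

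The remaining case is $(X_i, A_i) = (\mathbb{C}P^{n_i}, \mathbb{C}P^{m_i})$ with $m_i < n_i$. Here I would identify $Y_i$ via the tautological $S^{1}$-bundles: the homotopy fibre of $\mathbb{C}P^{j} \hookrightarrow \mathbb{C}P^{\infty}$ is $S^{2j+1}$, so the map of fibrations over $\mathbb{C}P^{\infty}$ induced by $\mathbb{C}P^{m_i} \hookrightarrow \mathbb{C}P^{n_i}$ exhibits $Y_i$ as the homotopy fibre of the induced map $S^{2m_i+1} \to S^{2n_i+1}$ on fibres. Since $2m_i+1 < 2n_i+1$, that map is null-homotopic, so $Y_i \simeq S^{2m_i+1} \times \Omega S^{2n_i+1}$. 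Consequently $\Sigma Y_i \simeq \Sigma S^{2m_i+1} \vee \Sigma \Omega S^{2n_i+1} \vee \Sigma(S^{2m_i+1} \wedge \Omega S^{2n_i+1})$; the first summand is a simply connected sphere, the second is in $\mathcal{W}$ because $\Omega S^{2n_i+1} \in \mathcal{P}$, and the third is in $\mathcal{W}$ by Lemma ~\ref{joinofWandPinW} applied with $\Sigma S^{2m_i+1} \in \mathcal{W}$ and $\Omega S^{2n_i+1} \in \mathcal{P}$. Thus $\Sigma Y_i \in \mathcal{W}$, and Corollary ~\ref{generalfibgraphinP} yields $\Omega \uxa^{K} \in \mathcal{P}$.

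I expect the one step needing genuine care to be the identification $Y_i \simeq S^{2m_i+1} \times \Omega S^{2n_i+1}$: one has to recognise that the fibre of $\mathbb{C}P^{m_i} \hookrightarrow \mathbb{C}P^{n_i}$ is controlled, through the comparison of Hopf fibrations over $\mathbb{C}P^{\infty}$, by the induced map $S^{2m_i+1} \to S^{2n_i+1}$, and then observe that this map is null-homotopic for dimension reasons. The remaining verifications — that the various pieces lie in $\mathcal{W}$ or $\mathcal{P}$, and the bookkeeping at $n_i = \infty$ — are routine given the closure results already established.
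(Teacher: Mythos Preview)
Your proposal is correct and follows essentially the same route as the paper: verify the hypotheses of Corollary~\ref{generalfibgraphinP} by identifying $\Omega\mathbb{C}P^{n_i}$ and the fibres $Y_i$ via the Hopf fibrations, then conclude. The only cosmetic differences are that the paper treats $n_i=\infty$ separately rather than via the convention $S^\infty\simeq\ast$, and it deduces $\Sigma Y_i\in\mathcal{W}$ directly from $Y_i\in\mathcal{P}$ rather than by expanding the suspension of the product.
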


\begin{proof}
    There are homotopy equivalences $\Omega \mathbb{C}P^{k} \simeq S^1 \times \Omega S^{2k+1}$ and $\Omega \mathbb{C}P^{\infty} \simeq S^1$. Therefore, $\Omega \mathbb{C}P^{n_i} \in \mathcal{P}$ for all $i$. First consider a pair of the form $(\mathbb{C}P^{n_i},*)$. The homotopy fibre $Y_i$ of the inclusion of the basepoint into $\mathbb{C}P^{n_i}$ is $\Omega \mathbb{C}P^{n_i}$, and so $Y_i \in \mathcal{P}$. Hence, $\Sigma Y_i \in \mathcal{W}$. 

    Now consider a pair of the form $(\mathbb{C}P^{n_i},\mathbb{C}P^{m_i})$. Suppose $n_i = \infty$. In this case, there is a standard homotopy fibration \[S^{2k+1} \rightarrow \mathbb{C}P^k \rightarrow \mathbb{C}P^\infty,\] and $\Sigma S^{2k+1} \in \mathcal{W}$. Now suppose $n_i \neq \infty$. Consider the homotopy fibration diagram \[\begin{tikzcd}
	{S^{2m+1} \times \Omega S^{2n+1}} & {S^{2m+1}} & {S^{2n+1}} \\
	{Y_i} & {\mathbb{C}P^{m_i}} & {\mathbb{C}P^{n_i}} \\
	& {\mathbb{C}P^\infty} & {\mathbb{C}P^\infty}
	\arrow[from=2-2, to=2-3]
	\arrow[from=2-3, to=3-3]
	\arrow[from=2-2, to=3-2]
	\arrow[Rightarrow, no head, from=3-2, to=3-3]
	\arrow[from=1-2, to=2-2]
	\arrow[from=1-3, to=2-3]
	\arrow["{*}", from=1-2, to=1-3]
	\arrow[from=2-1, to=2-2]
	\arrow[from=1-1, to=1-2]
	\arrow[from=1-1, to=2-1]
\end{tikzcd}\] where the maps in the bottom square are all inclusions, and the top map in the top right square is null homotopic since $m < n$. The top right square is a homotopy pullback, implying that $Y_i \simeq S^{2m+1} \times \Omega S^{2n+1}$. Therefore, $Y_i \in \mathcal{P}$, and so $\Sigma Y_i \in \mathcal{W}$. Hence, $\Sigma Y_i \in \mathcal{W}$ for all $i$, and Lemma ~\ref{generalfibgraphinP} implies that $\Omega \uxa^{K} \in \mathcal{P}$.
\end{proof}

\bibliographystyle{amsalpha}

\begin{thebibliography}{BBCG}

\bibitem[1]{Ad} J.F. Adams, On the non-existence of elements of Hopf invariant one, \textit{Ann. of Math.} \textbf{72} (1960), 20-104.

\bibitem[2]{An} D.J. Anick, Single loop space decompositions. \textit{Trans. Amer. Math. Soc.} \textbf{334} (1992), 929-940.

\bibitem[3]{BBC} A. Bahri, M. Bendersky and F.R. Cohen,
   Polyhedral Products and features of their homotopy theory, \emph{Handbook of Homotopy Theory} 103-144, CRC Press, Boca Raton, FL, (2020).  

\bibitem[4]{Ca} L. Cai, On the graph products of simplicial groups and connected Hopf algebras, arXiv:2306.16625.   

\bibitem[5]{CMN1} F.R Cohen, J.C. Moore, and J.A Neisendorfer, The double suspension and exponents of the homotopy groups of spheres. \textit{Ann. of Math. (2)} \textbf{110} (1979), 549-565.

\bibitem[6]{CMN2} F.R Cohen, J.C. Moore, and J.A Neisendorfer, Torsion in homotopy groups. \textit{Ann. of Math. (2)} \textbf{109} (1979), 121-168.

\bibitem[7]{CMN3} F.R Cohen, J.C. Moore, and J.A Neisendorfer, Exponents in homotopy theory. \textit{Algebraic topology and algebraic $K$ theory}, W. Browder, ed., Ann. of Math. Study \textbf{113}, Princeton University Press, 1987, 3-34.

\bibitem[8]{CPS} H.E.A. Campbell, F.P. Peterson, and P.S Selick, Self-maps of loop spaces. I., \emph{Trans. Amer. Math. Soc}, \textbf{293} (1986), no.1, 41-51.

\bibitem[9]{D} N. Dobrinskaya, Loops on polyhedral products and diagonal arrangements, arXiv:0901.2871.

\bibitem[10]{DS} G. Denham and A.I. Suciu, Moment-angle Complexes, Monomial Ideals and Massey Products, \emph{Pure Appl. Math. Q.} \textbf{3} (2007), 25-60. 

\bibitem[11]{F} E. Dror Farjoun, Cellular spaces, null spaces and homotopy localization, \emph{Lecture Notes in Mathematics}, \textbf{1622}, Berlin, Springer-Verlag, (1996).

\bibitem[12]{GPTW} J. Grbi\'{c}, T. Panov, S. Theriault, and J. Wu, The homotopy types of moment-angle complexes for flag complexes, \emph{Trans. Amer. Math. Soc} 
   \textbf{368} (2016), no. 9, 6663-6682. 
 
\bibitem[13]{GT1} J. Grbi\'{c}, and S. Theriault, The homotopy type of 
   the polyhedral product for shifted complexes, \emph{Adv. Math.} 
   \textbf{245} (2013), 690-715. 

\bibitem[14]{GT2} J. Grbi\'{c}, and S. Theriault, The homotopy type of the complement of a coordinate subspace arrangement, \emph{Topology} 
   \textbf{46} (2007), no. 4, 357-396. 
   
\bibitem[15]{H} P.J Hilton, On the homotopy groups of the union of spheres, \emph{J. Lond. Math. Soc.},\textbf{30} (1955),154-172. 

\bibitem[16]{HST} Y. Hao, Q. Sun and S. Theriault, Moore's conjecture for polyhedral products, \emph{Math. Proc. Cambridge Philos. Soc.} 
   \textbf{167} (2019), no. 1 23-33. 

\bibitem[17]{IK1} K. Iriye, and D. Kishimoto, Decompositions of polyhedral products for shifted complexes, \emph{Adv. Math.} 
   \textbf{245} (2013), 716-736. 

\bibitem[18]{IK2} K. Iriye, and D. Kishimoto, Fat-wedge filtration and decomposition of polyhedral products, \emph{Kyoto J. Math.} 
\textbf{59} (2019), no. 1, 1-51. 

\bibitem[19]{J} I.M. James, Reduced product spaces, \emph{Ann. of Math.} \textbf{62} (1955), 170-197.

\bibitem[20]{L} S. Lang, Introduction to linear algebra, Springer New York (1986).

\bibitem[21]{M} J. Milnor, On the construction F[K], \emph{Algebraic,topology, A student's guide} Cambridge Univ. Press, London (1972), 119-136.

\bibitem[22]{P} G.J. Porter, The homotopy groups of wedges of suspensions, \emph{Amer. J. Math} 
   \textbf{88} (1966), 655-663.

\bibitem[23]{PT} T. Panov, S. Theriault, The homotopy theory of polyhedral products associated with flag complexes, \emph{Compos. Math.} \textbf{155} (2019), no.1, 206-228.

\bibitem[24]{Se} J-P. Serre, Homologie singuli\`ere des espaces fibr\'es. Applications, \emph{Ann. of Math. (2)} \textbf{54} (1951), 425-505.

\bibitem[25]{St} L. Stanton, Loop space decompositions of highly symmetric spaces with applications to polyhedral products, \emph{Eur. J. Math.} \textbf{9} (2023), no.4, 104.

\bibitem[26]{T1} S. Theriault, Polyhedral products for connected sums of simplicial complexes, \emph{Proc. Steklov Inst. Math.} \textbf{317} (2022), 151–160.

\bibitem[27]{T2} S. Theriault, Polyhedral products for wheel graphs and their generalizations, accepted by \emph{Fields Institute Communications}.


\end{thebibliography}

\end{document}